\newtheorem{theorem}{Theorem}[section]
\newtheorem{lemma}[theorem]{Lemma}
\newtheorem{proposition}[theorem]{Proposition}
\newtheorem{corollary}[theorem]{Corollary}
\theoremstyle{remark}
\newtheorem{remark}[theorem]{\it \bf{Remark}\/}
\numberwithin{equation}{section}
\def\section{\@startsection{section}{1}%
  \z@{1.5\linespacing\@plus\linespacing}{.5\linespacing}%
  {\normalfont\bfseries\large\centering}}
\newcommand{\be}{\begin{equation}}
\newcommand{\ee}{\end{equation}}
\newcommand{\bea}{\begin{eqnarray}}
\newcommand{\eea}{\end{eqnarray}}
\newcommand{\bee}{\begin{eqnarray*}}
\newcommand{\eee}{\end{eqnarray*}}
\def\RR{\mathbb{R}}
\def\fref#1{{\rm (\ref{#1})}}
\def\supess{\mathop{\operator@font Sup\,ess}}
\def\RR{\mathbb{R}}
\def\e{\varepsilon}
\def\bar#1{{\overline #1}}
\def\fref#1{{\rm (\ref{#1})}}
\def\R2+{\RR ^2_+}
\def\lim{\mathop{\rm lim}}
\def\sup{\mathop{\rm sup}}
\def\ba{\begin{array}}
\def\ea{\end{array}}
\newcommand\ep{\epsilon}
\def\lab{\label}
\author[C. Collot]{Charles Collot}
\address{Courant Institute of Mathematical Sciences, New York University, 251 Mercer Street, New York, NY 10003, United States of America.}
\email{cc5786@nyu.edu}
\author[P. Germain]{Pierre Germain}
\address{Courant Institute of Mathematical Sciences, New York University, 251 Mercer Street, New York, NY 10003, United States of America.}
\email{pgermain@cims.nyu.edu}
\keywords{Nonlinear Schr\"odinger equation, random initial data, kinetic wave equation, Feynman graphs}
\subjclass[2010]{primary, 35Q55 70K70 35C20, secondary 35B34 81Q30} 
\title{On the derivation of the homogeneous kinetic wave equation}
\begin{document}

\begin{abstract} 
The nonlinear Schr\"odinger equation in the weakly nonlinear regime with random Gaussian fields as initial data is considered. The problem is set on the torus in any dimension greater than two. A conjecture in statistical physics is that there exists a kinetic time scale depending on the frequency localisation of the data and on the strength of the nonlinearity, on which the expectation of the squares of moduli of Fourier modes evolve according to an effective equation: the so-called kinetic wave equation. When the kinetic time for our setup is $1$, we prove this conjecture up to an arbitrarily small polynomial loss. When the kinetic time is larger than $1$, we obtain its validity on a more restricted time scale. The key idea of the proof is the use of Feynman interaction diagrams both in the construction of an approximate solution and in the study of its nonlinear stability. We perform a truncated series expansion in the initial data, and obtain bounds in average in various function spaces for its elements. The linearised dynamics then involves a linear Schr\"odinger equation with a corresponding random potential whose operator norm in Bourgain spaces we are able to estimate on average. This gives a new approach for the analysis of nonlinear wave equations out of equilibrium, and gives hope that refinements of the method could help settle the conjecture.
\end{abstract}

\maketitle
\tableofcontents

\section{Introduction}

\subsection{Presentation of the problem}

We consider the nonlinear Schr\"odinger equation
\begin{equation} \label{NLS} \tag{NLS}
\left\{ \begin{array}{l}
i \partial_t u + \Delta u=  \lambda^2 |u|^2 u \\
u(t=0) = u_0
\end{array} \right.
\end{equation}
set on the torus: $x \in \mathbb{T}^d = \mathbb{R}^d / (2\pi \mathbb{Z}^d)$, where $d \geq 2$, and with initial data of the form
\begin{equation}
\label{data}
u_0(x) = \epsilon^{d/2} \sum_{k \in \mathbb{Z}^d} A(\epsilon k) G(k) e^{i k \cdot x},
\end{equation}
where $A \in \mathcal{C}^\infty_0(\mathbb{R}^d,\mathbb{R})$ and $(G(k))_{k \in \mathbb{Z}^d}$ are independent standard centred complex Gaussians (see Section~\ref{notations}). The normalization is such that on average $\| u_0 \|_{L^2} \sim 1$, and, by Khinchine's inequality, $\| u_0 \|_{L^p} \sim_p 1$ on average as well, for any $p < \infty$.

Heuristic derivations, to which we will come back, show that, as $\epsilon \lambda \to 0$ (weakly nonlinear regime) and $\epsilon \to 0$ (high frequency limit), scaling properly the square modulus of the Fourier coefficients of $u$, they satisfy on average (with $\mathbb E$ the expectation)
\begin{equation}
\label{quetzalresplendissant}
{\epsilon^{-d} \mathbb{E} \left| \widehat{u}( \lfloor \epsilon k \rfloor) \left( T_{kin}t' \right) \right|^2 \longrightarrow \rho(t',k), \qquad \qquad \mbox{for }T_{kin} = \frac{1}{\epsilon^2 \lambda^4}}
\end{equation}
where $\rho$ solves the kinetic wave equation
\begin{equation}
\tag{KWE} \label{KWE}
\left\{
\begin{array}{l}
{\partial_{t'} \rho(t',k) = \mathcal{C}[\rho](k) ,}\\
\rho(0,k) = |A(k)|^2,
\end{array}
\right.
\end{equation}
with the collision operator given by
\begin{equation}
\begin{split}
\label{collisionoperator}
\mathcal{C}[\rho](k) =  & \ {c_0}\int_{(\mathbb{R}^d)^3} \delta(k+\ell-m-n) \delta(|k|^2 + |\ell|^2 - |m|^2 - |n|^2)\\
& \qquad \qquad \qquad \qquad  \rho(k) \rho(\ell) \rho(m) \rho(n) \left[ \frac{1}{\rho(k)} + \frac{1}{\rho(\ell)} - \frac{1}{\rho(m)} - \frac{1}{\rho(n)} \right] \,d\ell \,dm \,dn.
\end{split}
\end{equation}
and
\be \label{def:c0}
c_0 = 2^{2-2d} \pi^{1-2d}.
\ee
The present paper is an attempt to show that the prediction~\eqref{quetzalresplendissant} is verified on non trivial time scales, even though the kinetic time scale $T_{kin}$ is for the moment out of reach.

{Note that, for a quick formal computation to derive the kinetic time, one can consider the first nonlinear correction to the linear dynamics obtained by Duhamel formula:
\begin{align*}
\hat u(k) \ &\approx \ e^{-it|k|^2}\ep^{\frac d2} A(\ep k)G(k)\\
&\qquad -ie^{-it|k|^2} \epsilon^{\frac 32 d}\lambda^2 \sum_{\ell+m+n=k}\int_0^t  e^{is(|k|^2-|\ell |^2+|m|^2-|n|^2)}ds A(\ep \ell)A(-\ep m)A(\ep n)G(\ell)\overline{G(-m)}G(n).
\end{align*}
On the nearly resonant set $||k|^2-|\ell|^2+|m|^2-|n|^2|\lesssim t^{-1}$ the time integral is of order $t$. The sum of gaussians restricted to this set is performed over $\ep^{2-2d}t^{-1}$ integer points, hence of order $\ep^{1-d}t^{-1/2}$ from square root cancellation. The first nonlinear correction is thus of order $\ep^{d/2}(\lambda^2\ep \sqrt t)$ and compares with the initial datum for $t\sim T_{kin}$. This derivation has limits, see below.}

\subsection{Relevant time scales, parameters range}
\begin{itemize}
\item $\displaystyle T_{kin} = \frac{1}{\epsilon^2 \lambda^4}$: characteristic time scale for the kinetic wave equation.
\item $\displaystyle T_{lin} = \epsilon^2$: characteristic time scale for the linear part of the equation 
\item $\displaystyle T_{nonlin} = \frac{1}{\lambda^2}$:  characteristic time-scale for the nonlinear part of the equation (given that we expect $\|u\|_{L^\infty} \sim 1$ - up to logarithmic losses).
\end{itemize}

We will consider the regime where
$$
T_{lin} \ll T_{kin} \quad \iff \quad T_{nonlin} \ll T_{kin} \quad \iff \quad T_{lin} \ll T_{nonlin} \quad \iff \quad \lambda \epsilon \ll 1,
$$
which means that the regime we are considering is weakly nonlinear. In this regime,
$$
T_{lin} \ll T_{nonlin}\ll T_{kin} .
$$
We choose a power type relation between the strength of the nonlinearity and the frequency
\be \label{regimeeplambda}
\lambda=\ep^{-\gamma} \qquad \mbox{with} \qquad 0<\gamma < \frac 12
\ee
so that:
$$
T_{nonlin}<1<T_{kin}.
$$
Finally, let us consider resonances. The resonance modulus is classically given by
$$
\Omega(k,\ell,m,n) = |k|^2 - |\ell|^2 + |m|^2 - |n|^2.
$$
Since $k,\ell,m,n \in \mathbb{Z}^d$, $\Omega$ takes integer values; and since $|k|, |\ell|, |m|, |n| \lesssim \frac{1}{\epsilon}$, it satisfies $|\Omega| \lesssim \frac{1}{\epsilon^2}$. This means that only time scales $T$ such that
\begin{equation}
\label{chardonneret}
\epsilon^2 \ll T \ll 1
\end{equation}
are susceptible to yield the kinetic wave equation in an asymptotic regime. Indeed, if $T \lesssim \epsilon^2$, resonances are hardly playing any role; while if $T \gtrsim 1$, the resonance moduli $\Omega$ cannot be equidistributed modulo $\frac{1}{T}$, which prevents from taking the discrete to continuous limit in frequency.

\subsection{Rescaling to a large torus}
In the problem formulated above, the equation is set on a torus of size $1$, and $u_0$ has size $\sim 1$ in any $L^p$ (neglecting logarithmic factors if $p=\infty$), and varies on a typical scale $\sim \epsilon$. For the reader's convenience, we show how it can be rescaled to fit the setup adopted in~\cite{FGH,BGHS0,BGHS}.

We now let $\epsilon = L^{-1}$, and rescale $u$ by setting
$$
u'(t',x') =u' \left( \frac{t}{\epsilon^2},\frac{x}{\epsilon} \right) = \epsilon^{d/2} u \left( t,x \right).
$$ 
The equation solved by $u'$ is now
\begin{equation}
\label{NLS2} \tag{NLS2}
i \partial_{t'} u' +  \Delta_{x'} u' = (\lambda')^2 |u'|^2 u' \qquad \mbox{with} \qquad \lambda' = \lambda \epsilon^{1-\frac{d}{2}}.
\end{equation}
In this new setting, the domain has size $L$, $u'$ is of size $\sim L^{\frac{d}{p} - \frac{d}{2}}$ in $L^p$, and varies on a typical scale $\sim 1$. These orders of magnitude coincide with the framework adopted in~\cite{BGHS}.

To convert results from~\eqref{NLS} to~\eqref{NLS2}, observe that the time scale $t_0$ for~\eqref{NLS} corresponds to $t'_0 = \frac{t_0}{\epsilon^2}$ for~\eqref{NLS2}. In particular, the kinetic time scale $T_{kin} = \frac{1}{\lambda^4 \epsilon^2}$ for~\eqref{NLS} corresponds to the kinetic time scale $\displaystyle T'_{kin} = \frac{1}{(\lambda')^4 \epsilon^{2d}} = \frac{L^{2d}}{(\lambda')^4}$ for~\eqref{NLS2}.

\subsection{Background} \subsubsection{Derivation of the kinetic wave equation} The kinetic wave equation was first derived by Peierls~\cite{Peierls} in the context of Quantum Mechanics, and, in a different form and independently by Hasselmann~\cite{Hasselmann1,Hasselmann2}, who worked on water waves. The theory was then revived by Zakharov and his collaborators~\cite{ZLF}, giving a very versatile framework, which applies to a number of Hamiltonian systems satisfying the basic assumptions of weak nonlinearity, high frequency (or infinite volume limit), and phase randomness. Introductions to this research field can be found in Nazarenko~\cite{Nazarenko} and Newell-Rumpf~\cite{NR}. 

As far as rigorous mathematics go, a fundamental work, building up on~\cite{EY0}, is due to Lukkarinen and Spohn~\cite{LS1}, who reach the kinetic time scale for the correlations in a system at statistical equilbrium, obtaining a linearized version of the kinetic wave equation (see also~\cite{Faou} for another derivation of the linearized system). More heuristic considerations by the same authors on this question  can be found in~\cite{Spohn2} and~\cite{LS2}. We are indebted to them for several ideas in the present work, around Feynman diagrams and their estimation. 

The question of the derivation of~\eqref{KWE} for random data out of statistical equilibrium was first tackled in~\cite{BGHS}, which is the source of a several ideas which we extend here (construction of an approximate solution and control of the remainder). Compared to this work, we introduce a number of novel ideas: use of Bourgain spaces, finer analysis of the expansion through Feynman diagrams, and estimate of the average operator norm of the linearized operator. This allows us to get arbitrarily close to the kinetic time scale.

Another appearance of~\eqref{KWE} from \eqref{NLS} is when applying a random forcing to the equation and a dissipation \cite{ZL}. Recently, Dymov and Kuksin \cite{DK1,DK2} (see also the survey \cite{DK3}) studied the truncated series expansion for this model. They showed how in a weakly nonlinear context with non-vanishing dissipation the three first terms of the expansion were agreeing with ~\eqref{KWE}, studying also higher order terms but without controlling the full solution.

\subsubsection{Derivation of related collisional kinetic models} The kinetic wave equation is to phonons, or linear waves, what the Boltzmann equation is to classical particles. The derivation of the Boltzmann equation was put on a rigorous mathematical foundation with the foundational work of Lanford~\cite{Lanford} and its more recent clarification by Gallagher-Saint-Raymond-Texier~\cite{GST}. A few articles deal with the derivation of kinetic models for quantum particles \cite{BCEP1,BCEP2, BCEP3}; this question is closely related to the derivation of the kinetic wave equation, but is harder, 
since~\eqref{NLS} can be thought of as an intermediary step between a quantum mechanical model with a large number of particles, and kinetic theory.

Another strand of research focuses on linear dispersive models with random potential, from which one can derive the linear Boltzmann equation on a short time scale~\cite{Spohn1}, and the heat equation on a longer time scale~\cite{ESY1,ESY2}. 

Finally, let us mention the possibility of deriving Hamiltonian models for NLS with deterministic data in the infinite volume, or big box, limit~\cite{FGH,BGHS0}.

\subsubsection{Nonlinear dispersive equations with random data} The main difficulty in the present paper is to control a solution to~\eqref{NLS} with random data of typical frequency $\ep^{-1}$ and amplitude $\lambda$, in the regime \fref{regimeeplambda}. But other regimes are of interest, and have been considered previously: at fixed frequency size and vanishing amplitude ($\ep=1$, $\lambda \rightarrow 0$), correlations of the solutions were studied in \cite{dST}. Replacing $A(\ep k)=c_k$ in \fref{data} by $\ep=1=\lambda$, Burq and Tzvetkov obtained the existence of solutions below the critical regularity for $c_k$ \cite{BT1,BT2} (see \cite{CO,NS} for the Schr\"odinger equation). This result exploits the fact that randomizing Fourier series yields improved $L^p$ bounds (see \cite{PZ} for the historical work on the torus and \cite{AT,BL} for other Riemannian manifolds), which we also use here. Other measures than the ones generated by pushing forward $\sum c_k G(k) e^{ik.x}$ can be considered as well, as the Gibbs measure which was showed to be invariant in low dimensions \cite{Bourgain2,Bourgain3,DengNahmodYue}, and interpolation between Gibbs and white noise \cite{OQ}. Since we are here out of equilibrium, controlling the flow requires uniform bounds as $\ep\rightarrow 0$ instead of invariant measures properties.

\subsection{Main result} 

{Our main result is the existence of a solution over the nontrivial time range $[0,1]$, and the validity of the approximation by the kinetic wave equation on this time interval. The existence of~\eqref{NLS} up to time $1$ (on the complement of an exceptional set) is non trivial, and given by the first part of the Theorem; as for the local well-posedness of~\eqref{KWE}, it has been established for instance in~\cite{GIT}. We refer to \cite{EV} for global weak solutions and other qualitative results for \eqref{KWE}.
}

{Since $T_{kin}=\ep^{-2+4\gamma} $, as $ \gamma$ approaches $1/2$, the time interval $[0,1]$ gets close, up to an arbitrarily small polynomial loss, from the kinetic time. Note that $T_{kin}$ of order $1$ means to $\lambda^4 \ep^2$ of order $1$. }

{
\begin{theorem} \label{th:main} Pick any $0<\gamma <\frac 12$ and $\eta,\mu> 0$. Then there exist $0<\ep^*\leq 1$ and $\nu>0$ depending on $\gamma$, $\mu$ and $\eta$ such that for all $0<\ep\leq \ep^*$, a set $E$ of measure $\mathbb{P}(E) > 1 -  \epsilon^{\mu}$ exists such that
\begin{itemize}
\item \emph{Existence of solution:} On $E$, the equation~\eqref{NLS} with initial datum \fref{data} admits a solution $u \in \mathcal{C}^\infty([0,1] \times \mathbb{T}^d)$.
\item \emph{Validity of the kinetic wave equation:} Furthermore, denoting $\mathcal{A} = |A|^2$, for any $\lambda^{-2}\leq t \leq \ep^\eta$:
\be \label{serinducap}
\sum_{k \in \mathbb{Z}^d} \left| \mathbb{E} \left[ \mathbbm{1}_{E}\left[ |\widehat{u}(k,t)|^2 - \epsilon^d \mathcal{A}(\epsilon k) - \epsilon^d \frac{t}{T_{kin}}  \mathcal{C}(\mathcal{A})(\epsilon k) \right] \right] \right| \lesssim \frac{\epsilon^\nu t }{T_{kin}},
\ee
where $\mathcal{C}$ is the collision operator defined in~\eqref{collisionoperator} and $c_0$ is the constant given by \fref{def:c0}
\end{itemize}
\end{theorem}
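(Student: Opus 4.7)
The plan is to decompose $u = u_{\mathrm{app}} + w$, where $u_{\mathrm{app}} = \sum_{j=0}^{J} u_j$ is a truncated Picard expansion around the free flow and $w$ is a smooth remainder. The $u_j$ are defined by iterating the Duhamel formula: $u_0$ is the free evolution of \eqref{data}, and $u_j$ is the $(2j+1)$-linear term obtained by substituting $u_0+\cdots+u_{j-1}$ into the cubic nonlinearity and keeping the part of homogeneity $j$ in $\lambda^2$. In Fourier, $\widehat{u_j}(k,t)$ organises as a sum over ternary interaction trees $\mathcal{T}$ with $2j+1$ leaves; each tree contributes a nested oscillatory time integral against the resonance moduli $\Omega$, weighted by a prefactor $\epsilon^{(2j+1)d/2}\lambda^{2j}$, a product of amplitudes $A(\epsilon \cdot)$ sampled at the leaves, and a signed monomial in the independent Gaussians attached to those leaves. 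The cut-off $J$ is chosen large enough (depending on $\gamma,\mu,\eta$) that the contribution of the tail $\sum_{j>J} u_j$ to \eqref{serinducap} is $O(\epsilon^\nu t/T_{kin})$.

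To extract the kinetic approximation, expand $\mathbb{E}[\mathbbm{1}_E\,|\widehat{u_{\mathrm{app}}}(k,t)|^2]$ as a double sum over pairs of trees $(\mathcal{T},\mathcal{T}')$ and apply Wick's theorem to the Gaussian product: each pairing of the leaves of $\mathcal{T}$ and $\mathcal{T}'$ produces a closed Feynman interaction diagram whose value is an oscillatory lattice sum. The trivial pairing yields $\epsilon^d \mathcal{A}(\epsilon k)$; at the next order, the diagrams extracted from $\mathbb{E}|\widehat{u_1}|^2$ and $2\mathrm{Re}\,\mathbb{E}[\widehat{u_0}\,\overline{\widehat{u_2}}]$ combine into a lattice sum of the form
$$
\epsilon^{3d}\lambda^4 \sum_{\substack{\ell,m,n\in\mathbb{Z}^d \\ k+\ell=m+n}} \Bigl|\int_0^t e^{is\Omega(k,\ell,m,n)}\,ds\Bigr|^2 A(\epsilon\ell)A(\epsilon m)A(\epsilon n)\bigl[\cdots\bigr],
$$
which, by equidistribution of $\Omega$ modulo $1/t$ in the window \eqref{chardonneret}, converges to $\epsilon^d (t/T_{kin})\,\mathcal{C}(\mathcal{A})(\epsilon k)$, with the residual oscillatory integrals producing exactly the constant $c_0$ of \eqref{def:c0}. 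All remaining diagrams are then classified topologically (by the number of independent loops and the number of ``degenerate'' resonances) and shown to contribute $o(t/T_{kin})$, in the spirit of \cite{LS1,BGHS}.

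The remainder $w$ satisfies a perturbed NLS
$$
i\partial_t w + \Delta w \;=\; \lambda^2\bigl(2|u_{\mathrm{app}}|^2 w + u_{\mathrm{app}}^2\, \bar w\bigr) + \lambda^2 N(w,u_{\mathrm{app}}) + \mathrm{Err},
$$
with $N$ cubic and of degree $\geq 2$ in $w$, and $\mathrm{Err}$ the Picard truncation error. I would solve for $w$ by contraction in a Bourgain space $X^{s,b}$, setting $E$ to be the event on which $u_{\mathrm{app}}$ and the random operator below satisfy favourable averaged bounds, and showing $\mathbb{P}(E)>1-\epsilon^\mu$ via Markov and hypercontractivity on the Gaussians. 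Persistence of regularity then upgrades the $X^{s,b}$ solution to a smooth one, and plugging $u=u_{\mathrm{app}}+w$ into \eqref{serinducap} reduces the remainder contribution to negligible cross terms. The main obstacle will be a probabilistic bound on the operator norm of the random linearised operator $\mathcal{L}U := |u_{\mathrm{app}}|^2 U + \tfrac12 u_{\mathrm{app}}^2\bar U$ from $X^{s,b}$ to $X^{s,b-1+\delta}$: one needs $\mathbb{E}\|\mathcal{L}\|$ controlled by $\lambda^2 t$ up to logarithmic losses, so that on $[0,\epsilon^\eta]$ the linearisation is a contraction. Dualising $\mathcal{L}$ via a $TT^*$-type argument reduces this to a quartic average of $u_{\mathrm{app}}$, i.e.\ a further double Feynman expansion; the Bourgain weights couple frequency and modulation in a way that prevents a naive diagram-by-diagram majorisation, and one must combine the Gaussian cancellations \emph{with} the oscillatory cancellation in $\Omega$. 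This is the technical heart of the argument and also the reason why only the window $\lambda^{-2}\leq t\leq \epsilon^\eta$ can be reached, rather than the full kinetic time $T_{kin}$.
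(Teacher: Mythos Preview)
Your overall architecture matches the paper's --- truncated Duhamel expansion, Feynman-diagram bookkeeping via Wick's formula, contraction for the remainder in $X^{s,b}$, and operator-norm control of the linearisation. But two concrete ingredients are missing, and the scheme as written would not close.

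First, you omit Wick ordering. The naive Picard iterates built from the raw nonlinearity $|u|^2u$ contain degenerate interactions (those with $k_1+k_2=0$ or $k_2+k_3=0$ in the convolution) for which the resonance modulus vanishes identically and no oscillatory gain is available. These produce a phase rotation $e^{-2i\lambda^2(2\pi)^{-d}\|u_0\|_{L^2}^2 t}$; since $\lambda^2=\epsilon^{-2\gamma}\gg1$ on $[0,1]$, the fully degenerate diagrams make $\mathbb{E}\|u_j\|_{L^2}^2$ behave like $(\lambda^2 t)^{2j}$ rather than $(t/T_{kin})^j$, and the expansion diverges. The paper first extracts this phase, writing $u=e^{i\lambda^2\omega}v$, and then iterates with the Wick-projected trilinear form $P(a,\bar b,c)$ that explicitly removes the two degenerate frequency configurations. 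Only after this renormalisation do the iterates satisfy $\mathbb{E}\|u^n\|_{L^2}^2\lesssim t\,T_{kin}^{-n}$ on $[0,1]$, and only then is the linearised operator small. Your mention of ``degenerate resonances'' in the diagram classification does not substitute for this: the renormalisation has to occur in the iteration scheme itself, not merely in the post-hoc accounting.

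Second, a single $TT^*$ step for the random operator $\mathcal{L}$ is not enough. The bound $\|\mathcal{L}\|^2\leq\operatorname{Tr}(\mathcal{L}^*\mathcal{L})$ loses a factor $\epsilon^{-d}$ from the trace over the frequency box of side $\epsilon^{-1}$, and the resulting estimate $\mathbb{E}\|\mathcal{L}\|\lesssim\epsilon^{-d/2}T_{kin}^{-1/2}$ is useless. The paper instead computes $\mathbb{E}\operatorname{Tr}(\mathcal{L}^*\mathcal{L})^N$ for large $N$ --- a $4N$-linear Feynman expansion, not a quartic one --- and takes the $2N$-th root, so that the $\epsilon^{-d}$ loss becomes $\epsilon^{-d/(2N)}$, absorbable into the subpolynomial budget. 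This yields $\|\int_0^t e^{i(t-s)\Delta}\mathcal{L}\,ds\|_{X^{s,b}\to X^{s,b}}\lesssim\epsilon^{-\kappa}T_{kin}^{-1/2}=\epsilon^{-\kappa}\lambda^2\epsilon$ (not $\lambda^2 t$ as you wrote), and is precisely what allows the contraction to run on the full interval $[0,1]$ required by the existence statement, rather than only on $[0,\epsilon^\eta]$.
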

}

{
\begin{remark} \emph{Bounds on the solution} Since the regime considered in the theorem is supercritical (in the classical, deterministic sense) for $d \geq 5$, the mere existence of a smooth solution on a unit time interval is not trivial. The proof of the theorem actually gives quantitative bounds: namely, for some $s>\frac{d}{2}-1$ and $b>\frac{1}{2}$, the norm of the solution in $X^{s,b}([0,1]\times \mathbb T^d)$ (defined in \fref{id:bourgainspaces}) is $O(\epsilon^{-\mu})$. As usual with $X^{s,b}$ spaces, this should be understood as follows: there exists an extension $\widetilde{u}$ of $u$ to $\mathbb{R} \times \mathbb T^d$ such that $\| \widetilde{u} \|_{X^{s,b}} \lesssim \epsilon^{-\mu}$.
\end{remark}
}

{
\begin{remark}
\emph{On the range of parameters}:
\begin{itemize}
\item[(i)] \emph{Restriction to small times $t\lesssim 1$.} For $t$ of order $1$ or greater, the resonance moduli are not equidistributed on a scale $\frac{1}{t}$, preventing one to take the discrete to continuous limit which yields the collision integral (see below). Therefore, one needs $t\ll 1$ in \fref{serinducap} to obtain that the right hand side is of lower order compared to the size of the correction $\frac{t}{T_{kin}} \mathcal{C}(\mathcal{A})$ in the left hand side. For a similar reason of equidistribution for the dispersion relation of the Schr\"odinger equation, we cannot control the solution on a time interval $[0,T]$ with $T\gg 1$.
\item[(ii)] \emph{Restriction to large kinetic times $T_{kin}\gg 1$.} In the case $T_{kin}\lesssim 1$, the bounds we obtain for the approximate solution are powers of $\frac{1}{T_{kin}}$ due to degenerate low frequencies effects. This prevents both the convergence of our series expansion and its linear stability, so that we cannot cover this case. This explains the restriction $\gamma<\frac 12$.
\item[(iii)] \emph{Unit time interval for existence}. We need our time interval for existence to avoid equidistribution problems, and not to exceed the kinetic time. The first condition requires to consider small times $t\lesssim 1$ from (i) above, for which the second condition is always fulfilled from (ii) above. Hence the time interval $[0,1]$.
\item[(iv)] \emph{Nontrivial nonlinear effects}. The condition $\gamma>0$ gives $T_{nonlin}=\lambda^{-2}\leq 1$, so that the time interval $[0,1]$ exceeds the nonlinear time and the existence result is nontrivial. The condition $\lambda^{-2}\leq t\leq 1$ in \fref{serinducap} is due to the following. Before the nonlinear time $\lambda^{-2}$, the nonlinear effects did not kick in, so that the kinetic wave equation is irrelevant on $[0,\lambda^{-2}]$.
\item[(v)] \emph{Validity of (KWE), almost up to the kinetic time}. The above shows that the prediction~\eqref{quetzalresplendissant} is satisfied on time scales $\ll 1$; namely, the expectation of $\epsilon^{-d} |\widehat{u}_k|^2$ and the solution of~\eqref{KWE} are as close as expected. In particular, we are able to treat kinetic time scale of order $\ep^{-\mu}$ for $\mu>0$ as close as we want to $0$ (that is, for $0<\frac 12 -\gamma \ll 1$). This shows how $T_{kin}=1$ is attainable with arbitrarily small polynomial loss. 
\end{itemize}
\end{remark}
}

\begin{remark}
\emph{Relaxing the set up}: Many assumptions in the above theorem could be relaxed. The randomization through Gaussians is convenient since it allows the use of Wick's formula, but other randomizations should also be possible. The function $A$ was taken in $\mathcal{C}^\infty_0$ to simplify the proof as much as possible, but much milder hypotheses should suffice. Other dispersion relations could be considered as well, and in particular under better equidistribution properties our present analysis could be strengthened and intervals of time $[0,T]$ with $T\gg 1$ could be considered. The torus was chosen to be rational, but our proof applies verbatim to irrational tori - thanks to the work of Bourgain and Demeter~\cite{BD}, which gives Strichartz estimates for them. It should be possible to make the size of the exceptional set exponentially small in $\epsilon$. Finally, as should be expected, whether the equation is focusing or defocusing does not change anything in our argument.
\end{remark}

\subsection{Strategy of the proof and plan of the article} We present here a caricature of the proof, describing only the main ideas. \textit{In particular, we simplify formulas by omitting the less important terms.}

\bigskip

\noindent \underline{Approximation and error} The heart of the proof is to build a sufficiently good approximation of the solution. It is obtained by renormalising the phase (Wick ordering) and iterating Duhamel's formula, yielding a truncated series expansion.

\emph{Wick ordering} It can be explained formally as follows. $u_0$ given by \fref{data} is a stationary Gaussian field: the law of $u_0(x)$ as a random variable is independent of $x$. Assuming $u$ remains of the form \fref{data} one gets\footnote{Precisely, $\frac{(2\pi)^d}{2}\mathbb E \langle |u|^2u,v\rangle=\mathbb E \langle \| u\|_{L^2}^2u,v\rangle-\sum_k \mathbb E|\hat u_k|^2\mathbb E \overline{\hat v_k}\hat u_k$ in this case, and the second term of order $O(\ep^d\| v\|_{L^2})$ under \fref{data} is negligible.} that $\mathbb E\langle |u|^2u,v\rangle \approx \frac{2}{(2\pi)^d} \mathbb E \langle \| u\|^2_{L^2}u,v\rangle $ for $v$ a test Gaussian field via Wick formula \fref{wickformula}. This and mass conservation $\| u\|_{L^2}=\| u_0\|_{L^2}$ produces the first approximation
$$
iu_t+\Delta u\approx \frac{2}{(2\pi)^d}\lambda^2\| u_0 \|_{L^2}^2u
$$
for which the sole effect is a phase modulation $u(t)=e^{-it \lambda^2 \frac{2}{(2\pi)^d}\| u_0\|_{L^2}^2}e^{-it\Delta}u_0$. This in particular does not change the statistical properties of the solution. We therefore first renormalise the solution, setting $u=e^{-it \lambda^2 \frac{2}{(2\pi)^d}\| u_0\|_{L^2}^2}v$.

\emph{Approximation} Then, we define $u^0 = e^{it\Delta} u_0$ and, for $n \geq 1$,
$$
\left\{ 
\begin{array}{l}
i \partial_t u^n + \Delta u^n = \sum_{i+j+k = n-1} P (u^i, \overline{u^j}, u^k) \\
u(t=0) = u_0.
\end{array}
\right.
$$
Here $P$ is a trilinear operator which essentially corresponds to Wick ordering. The solution splits into approximate solution and error:
$$
u = e^{-it \lambda^2 \frac{2}{(2\pi)^d} \| u_0 \|_{L^2}^2} \left(u^{app} + u^{err} \right), \qquad \mbox{with} \quad u^{app} = \sum_{n=0}^N u^n.
$$
The approximate solution satisfies the equation up to an error $\mathcal{E}$
$$
i\partial_t u^{app} + \Delta u^{app} = |u^{app}|^2 u^{app} + \mathcal{E}.
$$
and the equation satisfied by $u^{err}$ is then (forgetting below the terms due to Wick ordering)
$$
i\partial_t u^{err} + \Delta u^{err} = \underbrace{2 |u^{app}|^2 u^{err} + (u^{app})^2 \overline{u^{err}}}_{\displaystyle \mathcal{L}(u^{err})} + \underbrace{2 |u^{err}|^2 u^{app} + (u^{err})^2 \overline{u^{app}}}_{\displaystyle \mathcal{B}(u^{err})} + \underbrace{|u^{err}|^2 u^{err}}_{\displaystyle \mathcal{T}(u^{err})} + \mathcal{E},
$$
where $\mathcal{L}$, $\mathcal{B}$, and $\mathcal{T}$ stand for the linear, bilinear, and trilinear terms respectively.

There remains to apply a fixed point argument in a (slightly modified) Bourgain space $X^{s,b}$ in order to show that $u^{err}$ is sufficiently small. In order to carry out this plan, we need bounds on $u^{app}$, $\mathcal{L}$, $\mathcal{B}$ and $\mathcal{T}$. Obtaining these bounds is the aim of the remainder of the paper.

The above is explained in greater detail and with full formulas in Section~\ref{sectionproofmaintheorem}.

\bigskip

\noindent \underline{Feynman diagrams} A formula for $u^1$ is easy to write: in physical space
$$
u^1 = i\lambda^2  \int_0^t e^{i(t-s) \Delta} P (e^{it\Delta} u^0,\overline{e^{it\Delta} u^0},e^{it\Delta} u^0)\,ds,
$$
and in frequency
$$
\widehat{u^1}(k) = - \frac{ i\lambda^2 \epsilon^{\frac{3d}{2}}}{(2\pi)^d} e^{-it|k|^2} \sum_{k = k_{0,1} + k_{0,2} + k_{0,3}} \int_0^t e^{is\Omega} G(k_{0,1}) \overline{G(-k_{0,2})}  G(k_{0,3}) A(\epsilon k_{0,1} ) \overline{A(-\epsilon k_{0,2} )}  A(\epsilon k_{0,3} ) \chi \,ds,
$$
where $\chi$ is the symbol of $P$, and $\Omega = |k|^2 - |k_{0,1}|^2 + |k_{0,1}|^2 - |k_{0,1}|^2$. However, writing the formula giving $u^3$ or $u^4$ is already extremely lengthy. This motivates the introduction of Feynman diagrams, which provide an intuitive and analytically efficient way of representing iterates. For instance, $u^1$ is represented by

\vspace*{0.5cm}
\begin{center}
\includegraphics[width=11cm]{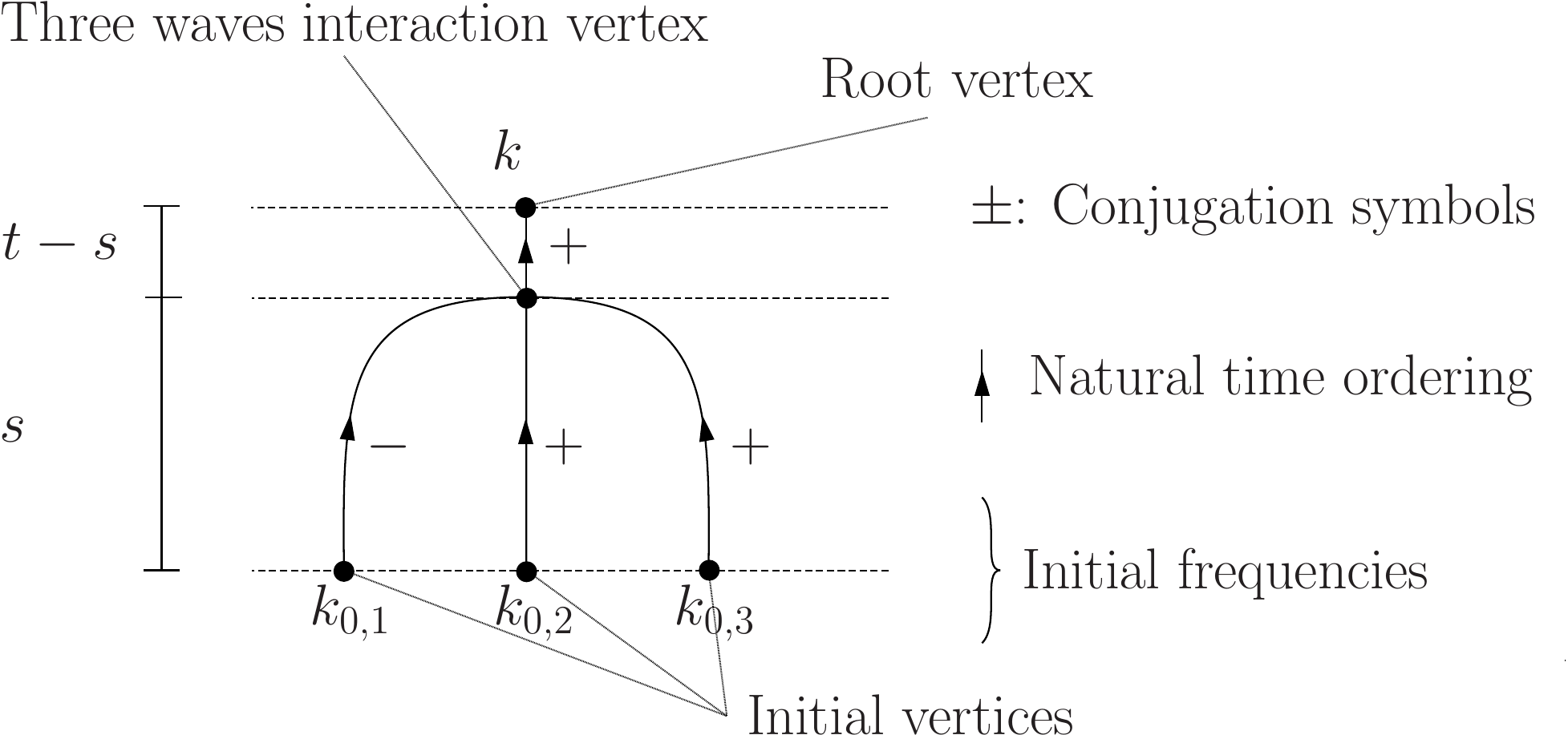}
\end{center}
\vspace*{0.5cm}

We refer to Section~\ref{sectionfeynman} for a full presentation of Feynman diagrams.

\bigskip

\noindent \underline{Bound on the iterates}
 In order to understand the approximate solution, we are led to estimating
$$
\mathbb{E}(\| u^n \|_{L^2}^2) \quad \mbox{and more generally} \quad \mathbb{E}(\| u^n \|_{L^p}^p), \mathbb{E}(\| u^n \|_{X^{s,b}}^2)
$$
(expectations are always taken with respect to the randomization of the data; an application of the Bienaym\'e-Chebyshev inequality allows to transfer these bounds to most data, up to a small loss, and excluding an exceptional set).
Applying Wick's formula~\eqref{wickformula} given the choice of the data~\eqref{data}, it appears that taking the expectation forces initial frequencies to be pairwise equal. At the level of Feynman diagrams, this can be represented intuitively as follows

\vspace*{0.5cm}
\begin{center}
\includegraphics[width=17cm]{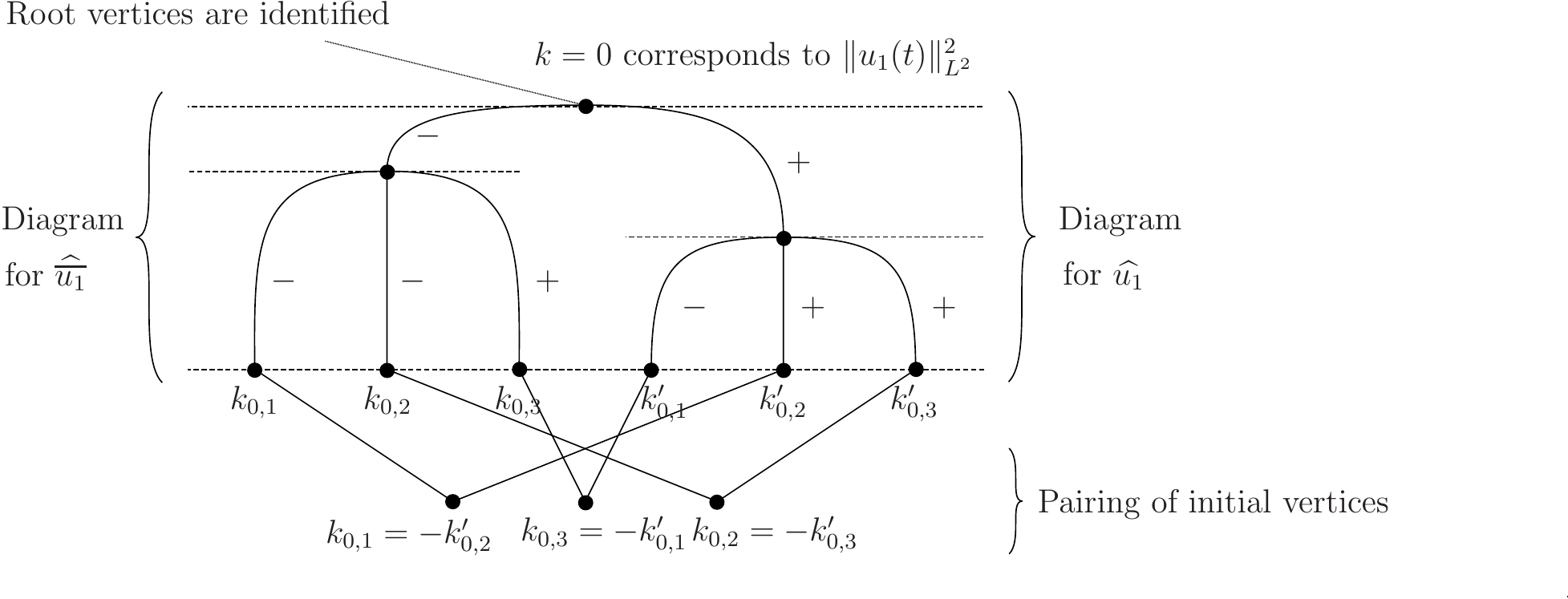}
\end{center}
\vspace*{0.5cm}

Using this representation, and some graph combinatorics, we can show that $\mathbb{E}(\| u^n \|_{L^2}^2)$ is bounded by a factor times $\left( \frac{1+t}{T_{kin}} \right)^n$. Though the factor depends on $n$, this is indicative of the fact that the series $u^n$ converges on the right time-scale. This is achieved in Section~\ref{sectionapproximate}. 

\bigskip

\noindent \underline{Bound on the error $\mathcal{E}$} As for the error $\mathcal{E}$, it is bounded by a constant times $t^{1/2}\left(\frac{1}{T_{kin}}\right)^{\frac N2}$ (for $t\geq 1$ such a bound would be $\left(\frac{t^2}{T_{kin}}\right)^{\frac N2}$ due to equipartition problems).

Since $T_{kin}$ is given by a negative power of $\epsilon$, this bound corresponds to an arbitrarily large power of $\epsilon$ by choosing $N$ sufficiently large; but note that the implicit constant depends on $N$.

\bigskip

\noindent \underline{Bound on the linear term $\mathcal{L}$} Turning to the linear term $\mathcal{L}$, we need to show that $\int_0^t e^{i(t-s) \Delta} \mathcal{L} \,ds$ has operator norm $\ll 1$ in $X^{s,b}$, so that it can be absorbed in a Neumann series type argument. By the theory of $X^{s,b}$ spaces,
$$
\left\|  \int_0^t e^{i(t-s) \Delta} \mathcal{L}\, ds \right\|_{X^{s,b} \to X^{s,b}} \lesssim \| \mathcal{L} \|_{X^{s,b} \to X^{s,b-1}}, 
$$
so it suffices to focus on the operator norm of $\mathcal{L}$ from $X^{s,b}$ to $X^{s,b-1}$. Using that $v^{app}$ is supported in Fourier on a ball of radius $C \epsilon^{-1}$, it suffices to bound $\mathcal{L}$ when it is furthermore localized on cubes of comparable size.

The idea is now to view $\mathcal{L}$ as a random operator, and rely on a classical trick in random matrix theory. First, consider $\mathcal{L}^* \mathcal{L}$, which is positive, and thus has its operator norm bounded by its trace. Since this operator is self-adjoint, we can estimate its norm by raising it to a high power, and taking the trace. Finally, taking in addition the expectation and using H\"older's inequality, we obtain that
$$
\mathbb{E} \| \mathcal{L} \| \lesssim \left[ \mathbb{E} \operatorname{Tr}  (\mathcal{L}^* \mathcal{L})^N  \right]^{\frac{1}{2N}}.
$$
The key is now that $\mathbb{E} \operatorname{Tr}  (\mathcal{L}^* \mathcal{L})^N$ can be represented through Feynman graphs, and estimated using the same tools as for the quantities we already discussed. This estimate is performed in Section~\ref{sectionlinear}.

\bigskip

\noindent \underline{Bound on the nonlinear terms $\mathcal{B}$ and $\mathcal{T}$} It is achieved by using the classical nonlinear theory of $X^{s,b}$ spaces in Section~\ref{sectionnonlinear}.

\subsection{Obstacles to reaching the kinetic time scale} The present work misses the kinetic time scale by an arbitrarily small power of $\lambda^{-1}$ or $\epsilon$, in the event where $T_{kin}$ is close to 1; it is therefore natural to ask what remains to be done to fully justify the kinetic wave equation, and whether this restriction on $T_{kin}$ can be lifted. We believe that a number of very significant difficulties need to be overcome. First, one should fully classify paired graphs, extract dominant pairings, and bound subdominant ones; this should be within reach by following the ideas in~\cite{LS2}. Second, the number theoretical analysis performed here is very crude, and only allows for a time scale $T_{kin}$ very large with respect to, but close to $1$; a much deeper analysis is required in order to prove or disprove the equidistribution of resonances on various scales. Third, small powers of $\lambda^{-1}$ or $\epsilon$ are lost in many steps  of the argument: for instance, when using the Bienaym\'e-Tchebycheff equality to exclude an exceptional set, or when bounding the operator norm of the linearized operator through the trace of the iterated $TT^*$;  bypassing this loss seems to require completely new ideas. Finally, the nonlinear analysis performed here could break down at the kinetic time scale: as is usual in the study of nonlinear dispersive problems in Bourgain $X^{s,b}$ spaces, reaching the scaling formally requires choosing $b = 1/2$, which is in principle forbidden.

At a more fundamental level, most results on nonlinear dispersive problems with random initial data (see~\cite{Bourgain2,Bourgain3,BT2,LS2}) make use, one way or another, of the invariant measure to reach long time scales. This is obviously not possible here since the problem is out of statistical equillibrium.

\bigskip \bigskip

\acknowledgement The independent work of Deng and Hani~\cite{DengHani} was made public at the same time as the present article. The main results of both papers are close, and the proofs share several ideas. Deng and Hani's viewpoint is informed by the local well-posedness theory for nonlinear dispersive problems with random data (and in particular its recent extensions~\cite{DengNahmodYue}). These authors developed an original approach to the combinatorics of Feynman diagrams. They also considered times $> 1$ (in the scaling considered here), at which point the distinction between rational and irrational tori becomes crucial.

The second author is grateful to S. Meyerson for an illuminating conversation on the circle method.

\section{Notations} \label{notations}

\subsection{Time range} Throughout the rest of this paper, we assume that we study a solution over a time interval $[0,T]$ with:
\begin{equation}
\label{epsilont}
\epsilon^c \leq T \lesssim 1.
\end{equation}

\subsection{Inequalities}
For any two quantities $X$ and $Y$, we denote $X \lesssim Y$ if there exists a universal constant $C$ such that $X \leq C Y$. {We write $X\sim Y$ if $X\lesssim Y$ and $Y\lesssim X$.} We denote $X \lesssim_Z Y$ if the constant $C$ is allowed to depend on a further quantity $Z$. To keep notations under control, we do not systematically record the dependence of all the constants; in particular, we always omit the dependence on obvious quantities, such as the dimension $d$, or the size of the support of $A$ through which the data is defined.

Most estimates are valid up to subpolynomial factors in $\epsilon$. This is recorded by a small constant $\kappa>0$, for instance we denote
$$
X \lesssim_\kappa \epsilon^{-\kappa} Y.
$$
For ease of notation, we allow the value of $\kappa$ to change from one line to the next (provided it can always be taken arbitrarily small), and we sometimes denote $\lesssim$ instead of $\lesssim_\kappa$.

Due to the assumption~\eqref{epsilont}, subpolynomial factors in $T$ give subpolynomial factors in $\epsilon$.

\subsection{Fourier transform}
Given a function $f(x)$ on the torus $\mathbb{T}^d = \mathbb{R}^d / (2\pi \mathbb{Z}^d)$, the Fourier transform in space is denoted by
$$
\mathcal{F}(f)_k = \widehat{f}(k) = \frac{1}{(2\pi)^{d/2}} \int_{\mathbb{T}^d} f(x) e^{- i k \cdot x} \,dx, \qquad f(x) =  \frac{1}{(2\pi)^{d/2}} \sum_{k\in \mathbb{Z}^d} \widehat{f}(k) e^{ i k \cdot x}.
$$
Given a function $g(t,x)$ on $\mathbb{R} \times \mathbb{T}^d$, the Fourier transform in space-time is
\begin{align*}
& \widetilde{g}(\tau,k) =  \frac{1}{(2\pi)^{\frac{d+1}{2}}} \int_{\mathbb{R}} \int_{\mathbb{T}^d} g(t,x) e^{- i (t \tau +  k \cdot x)} \,dx\,dt, \\
& g(t,x) =   \frac{1}{(2\pi)^{\frac{d+1}{2}}} \sum_{k \in \mathbb{Z}^d} \int_{\mathbb{R}} \widetilde{g}(\tau,k) e^{i (t \tau +  k \cdot x)} \,d\tau,
\end{align*}
The Fourier multiplier $m(D)$ acts only in the space variable, through the definition
$$
m(D) f = \mathcal{F}^{-1} [m(k) \widehat{f}(k)].
$$
For $\epsilon>0$, $N\in 2^\mathbb{N}$,  and $n\in \mathbb{Z}^d$ ,we now define $C_{\epsilon,N}^n$ to be the cuboid of side $\frac{N}{\epsilon}$ and center $\frac{N}{\epsilon}n$. The characteristic function of this cube is denoted $\mathbf{1}_{C_{\epsilon,N}^n}$, and enables us to define the projection operators
$$
P_{\epsilon,1} = {\mathbf{1}}_{C_{\epsilon,2}^0}(D) \qquad \mbox{and if $N \geq 2$,} \qquad P_{\epsilon,N} = {\mathbf{1}}_{C_{\epsilon,2N}^0}(D) -  {\mathbf{1}}_{C_{\epsilon,N}^0}(D)
$$
as well as
$$
Q_{\epsilon,N}^n = \mathbf{1}_{C^n_{\epsilon,N}}(D).
$$
These operators are bounded on $L^p$ spaces, $1<p<\infty$ and provide decompositions of the identity:
$$
\sum_{N \in 2^\mathbb{N}} P_{\epsilon,N} = \operatorname{Id}, \qquad \sum_{n \in \mathbb{Z}^d} Q_{\epsilon,N}^n = \operatorname{Id}.
$$

\subsection{Lebesgue spaces}
The scalar product on $L^2$ is defined by
$$
\langle f \,,\,g\rangle = \int \overline{f} g\,dx.
$$
Space-time Lebesgue spaces on $[0,T] \times \mathbb{T}^d$ are given by
$$
\| u \|_{L^p_T L^q} = \| u \|_{L^p([0,T],L^q(\mathbb{T}^d))} = \left\| \| u(t,x) \|_{L^q(\mathbb{T}^d)} \right\|_{L^p(0,T)}
$$

\subsection{Bourgain spaces} We use scaled Sobolev spaces
$$
\| f \|_{H^s_\epsilon} = \| \langle \epsilon D \rangle^s f \|_{L^2}
$$
and their associated scaled Bourgain spaces:
\begin{equation} \label{id:bourgainspaces}
\| u \|_{X^{s,b}_\epsilon} = \| e^{-it\Delta} u(t) \|_{H^b_t H^s_{\epsilon,x}} =  \| \langle \epsilon k \rangle^s \langle \tau +|k|^2 \rangle^b \widetilde{u}(\tau,k) \|_{L^2(\mathbb{R} \times \mathbb{Z}^2)}.
\end{equation}
Basic properties of these spaces are given in Appendix \ref{Xsbbasics}.

\subsection{Randomization}We denote $\mathbb{P}$ and $\mathbb{E}$ for the probability of an event and expectation of a random variable. The random variables $(G(k))_{k\in \mathbb{Z}^d}$ are independent centred standard complex Gaussians: they satisfy for all $k,\ell \in \mathbb{Z}^d$.
$$
\mathbb{E} G(k) = 0, \qquad \mathbb{E} [G(k) G(\ell)] = 0, \qquad \mathbb{E} [G(k) \overline{G(\ell)}] = \delta(k-\ell)
$$
and the Wick formula
\begin{equation}
\label{wickformula}
\begin{split}
\mathbb{E} [ G(k_1) \dots G(k_r) \overline{G(k_{r+1}) \dots G(k_{r+s})}] = \# P,
\end{split}
\end{equation}
where $P$ is the set of admissible pairings, that is to say partitions of $\{ 1,\dots,2r \}$ into sets of the form $\{ i,j \}$ (unordered pairs), with $i \leq r$, $j \geq r+1$ and $k_i = k_j$. In particular, the above is zero if $r \neq s$.

\section{Proof of the main theorem}

\label{sectionproofmaintheorem}

{
The aim of this section is to prove the main theorem. This will rely on the combination of results which are proved in the following sections, namely Section~\ref{sectionfeynman} (construction of Feynman diagrams), Section~\ref{sectionapproximate} (bounds on the approximate solution), Section~\ref{sectionlinear} (boundedness in $X^{s,b}$ of the linearized operator around the approximate solution), Section~\ref{sectionnonlinear} (multilinear estimates in $X^{s,b}$). We take the results proved in these later sections for granted, and show how they can be combined to obtain our main result.}

\subsection{The frequency truncation for Wick ordering}
Define the truncation operator
\begin{align*}
& \mathcal{F} [P(a,b,c)](k) = \frac{1}{(2\pi)^d} \sum_{k_1+k_2+k_3=k}  \widehat a(k_1)\widehat b(k_2)\widehat c(k_3)  (1-\delta(k_1+k_2)-\delta(k_2+k_3)).
\end{align*}
This gives the decomposition of the product
$$
abc =P(a,b,c) + \frac{1}{(2\pi)^d} \langle \overline{a},b\rangle c + \frac{1}{(2\pi)^d} a \langle \overline{b},c\rangle.
$$

\subsection{Approximate solution and error }
The approximate solution is defined through the following iterative resolution scheme
\begin{equation}
\label{defun}
u^0 = e^{it\Delta} u_0 \qquad \mbox{and if $n \geq 1$,} 
 \left\{ \begin{array}{l l}\displaystyle i \partial_t u^n + \Delta u^n = \lambda^2 \sum_{ i+j+k=n-1}P (u^i,\overline{u^j},u^k), \\ u^n(0)=0, \end{array} \right..
\end{equation}
Defining further
$$
V^{i,j} =  \langle  u^i , u^{j}\rangle , \qquad V = \sum_{i,j \leq N} V^{i,j}=\left\| \sum_{n=0}^N u^n\right\|_{L^2}^2, \qquad \omega(t) =- \frac{2t}{(2\pi)^d} \| u_0\|^2_{L^2},
$$
our approximate solution will be
$$
u^{app} = \chi \left( \frac{t}{2} \right) e^{i \lambda^2 \omega} v^{app} \qquad \mbox{with} \qquad v^{app} = \sum_{n=0}^N u^n.
$$
Notice how we add a smooth cutoff function in the definition of $u^{app}$. Here, $\chi$ is a function in $\mathcal{C}^\infty_0$ such that $\chi = 1$ on $B(0,1)$ and $\chi = 0$ on $B(0,2)^\complement$; as a result, the cutoff only affects $t>2$, and the equation is unchanged for $t<2$).
Extracting the factor $e^{i \lambda^2 \omega}$ is sometimes called Wick ordering, and is classically used for random data problems, see for instance~\cite{Bourgain3}. 

Since the flow of~\eqref{NLS} preserves the mass $\| u(t) \|_{L^2}=\| u_0\|_{L^2}$, the approximate solution satisfies for $t<1$
\begin{align*}
 i\partial_t u^{app} + \Delta u^{app} - \lambda^2 |u^{app}|^2 u^{app}  &= E^N e^{i\lambda^2 \omega}+\frac{2}{(2\pi)^d} \lambda^2\left(2\mathfrak{Re}\langle u-u^{app},u^{app}\rangle +\left\| u -u^{app}\right\|_{L^2}^2\right)u^{app},
\end{align*}
(using that, for any $x,y \in \mathbb{C}$, $|x|^2 - |y|^2 = 2 \mathfrak{Re}((x-y)\overline{y}) + |x-y|^2$) where the error terms are given by
\begin{align*}
&E^N = \lambda^2 \left[- \sum_{\substack{i,j,k \leq N \\ i+j+k \geq N}} u^i \overline{u^j} u^k + \frac{2}{(2\pi)^d} \sum_{\substack{i,j,k \leq N \\ i+j+k \geq N}} V^{i,j} u^k \right].
\end{align*}
The solution $u$ can now be decomposed into approximation plus error
$$
u = u^{app} + u^{err}, \qquad \mbox{with} \qquad u^{err} = e^{i\lambda^2 \omega} v^{err},
$$
where $v^{err}$ solves
\begin{align*}
i\partial_t v^{err} + \Delta v^{err} + \frac{2 \lambda^2}{(2\pi)^d} \| u\|_{L^2}^2 v^{err} &= \lambda^2 \left( |v^{err} + v^{app}|^2 (v^{err} + v^{app}) - |v^{app}|^2v^{app} \right)  - E^N\\
&- \frac{2\lambda^2}{(2\pi)^d} \left(2\mathfrak{Re}\langle v^{err},v^{app}\rangle +\left\| v^{err}\right\|_{L^2}^2\right)v^{app}
\end{align*}
which can be written, for $t<1$,
\begin{align*}
i\partial_t v^{err} + \Delta v^{err} = \mathcal{L}(v^{err}) + \mathcal{B}(v^{err}) + \mathcal{T}(v^{err}) + \mathcal{E}
\end{align*}
where the linear, bilinear, trilinear, and error terms are given by
\begin{align*}
& \mathcal{L}(w) = \chi \left( t\right) \lambda^2 \left[ 2 |v^{app}|^2 w - 2(2\pi)^{-d} V w-2(2\pi)^{-d}\langle v^{app},w\rangle v^{app} + (v^{app})^2 \overline{w}-2(2\pi)^{-d}\langle w,v^{app}\rangle v^{app} \right] \\
& \mathcal{B}(w) = \chi(t) \lambda^2 \left[ 2 |w|^2 v^{app}-2(2\pi)^{-d}\| w\|_{L^2}^2v^{app}-2(2\pi)^{-d}\langle w,v^{app}\rangle w + w^2 \overline{v^{app}}-2(2\pi)^{-d}\langle v^{app},w\rangle w \right] \\
& \mathcal{T}(w) = \chi(t) \lambda^2 \left(|w|^2 w- 2(2\pi)^{-d} \| w \|_{L^2}^2w\right) \\
& \mathcal{E} = - \chi \left( t\right)   E^N.
\end{align*}
Notice how we once again added smooth cutoff functions in the definitions of the terms above; $\chi$ is still a function in $\mathcal{C}^\infty_0$ such that $\chi = 1$ on $B(0,1)$ and $\chi = 0$ on $B(0,2)^\complement$.

\subsection{Bounds on the expansion}

{Assuming $\epsilon \leq T \leq 1$, Proposition~\ref{pr:LpLpinter} implies the following corollary.}

\begin{corollary}
\label{aigleroyal}
Given $1\geq t,T > \epsilon$, $N \in \mathbb{N}$, $\mu>0$, $s>0$, $p \geq 2$, there exists $b > \frac{1}{2}$ and a set $E=E_{t,T,N,\mu,s,p}$ with probability $\mathbb{P}( E )\geq 1 - \epsilon^{\mu}$ such that on $E$, and if $n \leq N$,
\begin{align*}
& \| u^n(t) \|_{L^2} \lesssim_{N,\mu} \epsilon^{-\mu}t^{\frac 12}\left( \frac{1}{T_{kin}} \right)^{n/2} \\
& \| u^n(t) \|_{L^p_T L^p} \lesssim_{N,\mu,p} \epsilon^{-\mu} (\ep+T^2)^{\frac{1}{p}} \left( \frac{1}{T_{kin}} \right)^{\frac{n}{2}} (T^{1/2} \epsilon^{-1})^{\frac{1}{2} - \frac{1}{p}} \\
& \left\| \chi(t) \int_0^t e^{i(t-s)\Delta} E^N \,ds \right\|_{X^{s,b}_\epsilon} \lesssim_{N,\mu} {\epsilon^{-2\mu}} \epsilon^{- \frac{1}{4} - \frac{d}{4}}  \left( \frac{1}{T_{kin}} \right)^{\frac N2} \\
\end{align*}
\end{corollary}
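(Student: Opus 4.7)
The strategy is a straightforward probabilistic transfer from the moment bounds of Proposition~\ref{pr:LpLpinter} to almost-sure bounds on an exceptional set. I assume that the proposition provides expectation estimates of the form
\[
\mathbb{E}\|u^n(t)\|_{L^2}^2, \qquad \mathbb{E}\|u^n\|_{L^p_TL^p}^p, \qquad \mathbb{E}\Bigl\|\chi\int_0^t e^{i(t-s)\Delta}E^N\,ds\Bigr\|_{X^{s,b}_\epsilon}^2
\]
with right-hand sides matching the scales in the corollary but without the $\epsilon^{-\mu}$ factors. The key structural observation enabling the transfer is that, by induction on $n$ through the iterative scheme~\fref{defun}, each $u^n(t,x)$ is a polynomial of degree exactly $2n+1$ in the real and imaginary parts of the Gaussian family $\{G(k)\}_{k\in\mathbb Z^d}$. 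Consequently, each of the three scalars above (after squaring where necessary) is a polynomial of bounded degree in these Gaussians, and Nelson's hypercontractivity yields
\[
(\mathbb{E}|X|^k)^{1/k}\lesssim_{n,k}\mathbb{E}|X|
\]
for every positive $X$ of this form and every integer $k\geq 1$.

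Given hypercontractivity, the proof of each of the three bounds proceeds identically. Fix one of the quantities $Y$ and write $M_Y$ for its expected scale (the right-hand side of the target bound stripped of the $\epsilon^{-\mu}$ factor). Markov's inequality applied to the $2k$-th moment of $Y$, combined with the hypercontractive comparison and the moment bound from Proposition~\ref{pr:LpLpinter}, gives
\[
\mathbb{P}(Y > \epsilon^{-\mu}M_Y)\;\leq\;\frac{\mathbb{E} Y^{2k}}{\epsilon^{-2k\mu}M_Y^{2k}}\;\lesssim_{N,k}\;\epsilon^{2k\mu}.
\]
For $k$ chosen depending only on $\mu$ and $N$ so that $2k\mu > \mu$, this probability is bounded by $\tfrac13\epsilon^\mu$ for all $\epsilon\leq\epsilon^*(\mu,N)$. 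A union bound over the three events defines the exceptional set, whose complement $E=E_{t,T,N,\mu,s,p}$ has probability at least $1-\epsilon^\mu$ and on which all three bounds hold simultaneously.

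The main obstacle is not the probabilistic transfer, which is mechanical, but the deterministic input feeding into the third estimate. The error term $E^N$ is a sum of cubic expressions $u^i\overline{u^j}u^k$ with $N\leq i+j+k\leq 3N$ and $i,j,k\leq N$, hence a polynomial in the Gaussians of degree at most $6N+3$, so the Duhamel integral with the cutoff $\chi$ is a Gaussian polynomial of the same bounded degree and hypercontractivity applies. One must however choose a single $b>\tfrac12$ (close to $\tfrac12$) compatible with the Bourgain-space energy inequality
\[
\Bigl\|\chi(t)\int_0^t e^{i(t-s)\Delta}F\,ds\Bigr\|_{X^{s,b}_\epsilon}\;\lesssim\;\|F\|_{X^{s,b-1}_\epsilon}
\]
of Appendix~\ref{Xsbbasics}, then exploit the frequency localization $|k|\lesssim_N\epsilon^{-1}$ of the iterates (propagated from the Fourier support of $A$ through the trilinear scheme) and a Bernstein-type embedding to control the $X^{s,b-1}_\epsilon$ norm of $F=E^N$ by a tractable norm such as $\|E^N\|_{L^2_TL^2_x}$. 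This last step is where the loss $\epsilon^{-1/4-d/4}$ appears, while the factor $(1/T_{kin})^{N/2}$ emerges from the $L^2$ bound on each $u^\ell$ combined with H\"older's inequality and the constraint $i+j+k\geq N$. Because all of these deterministic manipulations preserve the Gaussian-polynomial structure, the Markov-hypercontractivity argument above closes the proof.
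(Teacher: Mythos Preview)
Your approach is correct but heavier than the paper's. The paper simply applies Bienaym\'e--Chebyshev once to each moment estimate already furnished by Proposition~\ref{pr:LpLpinter}: the second moment for the $L^2$ bound, the $p$-th moment for the $L^p_TL^p$ bound, and the \emph{first} moment for the $X^{s,b}_\epsilon$ bound. Note that the proposition only gives $\mathbb{E}\|\cdot\|_{X^{s,b}_\epsilon}$, not its square as you assume; this is precisely why the third line of the corollary carries $\epsilon^{-2\mu}$ rather than $\epsilon^{-\mu}$, so that first-moment Markov already yields exceptional probability $\lesssim\epsilon^{2\mu-\kappa}\leq\epsilon^\mu$. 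No hypercontractivity is needed. Your appeal to Nelson's inequality is valid (since $\|u^n\|_{L^2}^2$, $\|u^n\|_{L^p}^p$ for even $p$, and $\|\cdot\|_{X^{s,b}_\epsilon}^2$ are all polynomials of bounded degree in the Gaussians) and would let you push the exceptional probability to any power of $\epsilon$, but that gain is not used.

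Your discussion of the deterministic input behind the third bound is superfluous here: that estimate is part of Proposition~\ref{pr:LpLpinter} and is consumed as a black box. For the record, the paper's derivation (Section~5.5) does not pass through $L^2_TL^2_x$ and a Bernstein embedding as you sketch, but through the dual Strichartz embedding $L^{4/3}_TL^{4/3}_x \hookrightarrow X^{s,b'}_\epsilon$ for $b'<-\tfrac12$, followed by H\"older against three factors of $\|u^\ell\|_{L^4_TL^4}$; the $(1/T_{kin})^{N/2}$ then comes from the $L^4$ (not $L^2$) bounds on the iterates. Your proposed route would produce a different, worse, power of $\epsilon$ in the loss.
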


{\begin{proof}
Considering for instance the $L^2$ bound, it follows from the combination of~\eqref{bd:fourierwnLp} and the Bienaym\'e-Chebyshev inequality
$$
\mathbb{P}( \| u^n(t) \|_{L^2} > \rho) \lesssim \rho^{-2} \mathbb{E} \| u_n(t) \|_{L^2}^2
$$
with $\rho = \epsilon^{- \mu} t^{1/2} T_{kin}^{-n/2}$ 
\end{proof}}

\begin{remark}
Note a loss factor in the $L^p$ estimates for $p>2$ in the above. It could be removed by a further refined analysis, but there is no need for it in the present paper as the error shows an arbitrarily large polynomial gain.
\end{remark}

\subsection{Bounds on the linear, bilinear and trilinear terms}

The following proposition gives a bound on $\mathcal{L}$, if one excludes an exceptional set.
\begin{proposition} \label{prop:lineaire}
If $N\in \mathbb{N}$, $\mu>0$, $s>0$, there exists $b > \frac{1}{2}$ and a set $E_{N,\mu,s}$ of probability $\mathbb{P}(E_{N,\mu,s}) > 1 - \epsilon^{\mu}$ on which the operator norm of $\mathfrak{L}$ can be bounded as follows:
$$
\left\| \chi(t)\int_0^t e^{i(t-s)\Delta} \mathcal{L} \,ds \right\|_{X^{s,b}_\epsilon \to X^{s,b}_\epsilon} \lesssim_{N,\mu}
\epsilon^{-\mu} \sqrt{\frac{1}{T_{kin}}}
$$ 
\end{proposition}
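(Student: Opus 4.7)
The plan is to follow the strategy outlined in the introduction, combining an $X^{s,b}$ reduction with a random-matrix-style trace bound on the expected operator norm, and then a Bienaymé-Chebyshev step to remove an exceptional set.

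First I would reduce to a source-to-target norm of the multiplication-type operator $\mathcal{L}$ itself, without the Duhamel integral. Using the standard transfer lemma for Bourgain spaces (recalled in Appendix~\ref{Xsbbasics}), for any $b>\tfrac12$ sufficiently close to $1/2$,
\[
\left\| \chi(t)\int_0^t e^{i(t-s)\Delta}F\,ds \right\|_{X^{s,b}_\epsilon} \lesssim \|F\|_{X^{s,b-1}_\epsilon},
\]
so it suffices to bound $\mathcal{L}$ as an operator $X^{s,b}_\epsilon \to X^{s,b-1}_\epsilon$. Recall that $\mathcal{L}$ is a sum of local terms like $2|v^{app}|^2 w$ and $(v^{app})^2 \overline w$, together with non-local rank-one terms of the form $\langle v^{app}, w\rangle v^{app}$ arising from Wick ordering. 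Since $v^{app}$ is Fourier-supported on a ball of radius $\lesssim \epsilon^{-1}$, a dyadic decomposition in frequency and a partition using the operators $Q_{\epsilon,N}^n$ localizes $w$ to cubes of size comparable to $\epsilon^{-1}$ and generates at most logarithmic losses, so I can fix such a frequency-localized piece $\mathfrak L$ and show $\|\mathfrak L\|_{X^{s,b}_\epsilon \to X^{s,b-1}_\epsilon} \lesssim \epsilon^{-\mu} T_{kin}^{-1/2}$.

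For this key estimate I use the trace bound. After localization, $\mathfrak L$ acts on an effectively finite-dimensional range (frequencies are integers in an $O(\epsilon^{-d})$ cube), so $\mathfrak{L}^*\mathfrak{L}$ is trace class and positive, giving for any integer $M$,
\[
\|\mathfrak L\|^{2M} \le \Tr (\mathfrak L^* \mathfrak L)^M,
\qquad
\mathbb{E}\|\mathfrak L\| \le \bigl( \mathbb{E}\, \Tr (\mathfrak L^* \mathfrak L)^M \bigr)^{1/(2M)}.
\]
Now $\Tr(\mathfrak L^*\mathfrak L)^M$ unfolds as a sum over $2M$ space-time frequency indices connected in a cyclic trace structure, each vertex carrying either a factor of $v^{app}$ or $\overline{v^{app}}$. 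Writing out $v^{app}=\sum_{n\le N} u^n$ and expanding each $u^n$ through its iterated Duhamel representation, every term is a polynomial in the Gaussians $G(k)$. Taking the expectation and applying Wick's formula \eqref{wickformula} pairs these Gaussians; the result is an expansion over Feynman-type paired graphs which is the cyclic-trace analogue of the graphs already controlled in Section~\ref{sectionapproximate} for $\mathbb E\|u^n\|_{L^2}^2$. The same combinatorial counting — accounting for resonance moduli via the dispersion relation, the volume of pairing configurations, and the gain $1/T_{kin}$ per loop generated by the time integrals — then yields, for each admissible paired graph, a contribution of size $(C_{N,M}\, \epsilon^{-\kappa})\,(1/T_{kin})^M$, with the number of graphs bounded combinatorially in $N$ and $M$.

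Given such a bound $\mathbb{E}\,\Tr(\mathfrak L^*\mathfrak L)^M \lesssim_{N,M} \epsilon^{-\kappa}(1/T_{kin})^M$, taking the $2M$-th root gives $\mathbb{E}\|\mathfrak L\| \lesssim_{N,M} \epsilon^{-\kappa/(2M)} T_{kin}^{-1/2}$. Choosing $M=M(\mu)$ large enough absorbs the loss into $\epsilon^{-\mu/2}$. Markov's inequality (Bienaymé-Chebyshev at a sufficiently high moment) then converts this average control into a pointwise bound on a set of probability at least $1-\epsilon^\mu$; summing over the $O(\log \epsilon^{-1})$ dyadic and cube-localization pieces costs only a subpolynomial factor absorbed in $\epsilon^{-\mu}$. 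The main obstacle is the combinatorial-analytic step: the paired graphs arising from $\Tr(\mathfrak L^*\mathfrak L)^M$ have a richer cyclic structure than those treated for $\|u^n\|_{L^2}^2$ (each $\mathfrak L$ carries two external legs, which are contracted through $2M$ vertices along the trace cycle, producing additional cycles and potential large-loop degeneracies), and showing that each such graph still only produces a factor of $T_{kin}^{-M}$ — with the combinatorial prefactor controlled uniformly enough that $M$ can be taken large — is the crux of the argument and where the bulk of the work lies.
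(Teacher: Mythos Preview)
Your proposal is correct and follows essentially the same route as the paper: reduce via the transfer lemma, localize to frequency cubes of side $\epsilon^{-1}$, bound the localized piece by $\bigl(\mathbb{E}\,\Tr(\mathfrak L^*\mathfrak L)^M\bigr)^{1/(2M)}$, expand the trace via Wick's formula into paired Feynman graphs of the cyclic type, estimate each graph by the same vertex-by-vertex integration used in Section~\ref{sectionapproximate}, and conclude by Bienaym\'e--Chebyshev.

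Two details you glossed over that the paper makes explicit. First, the trace argument is carried out at the symmetric endpoint $X^{0,1/2}\to X^{0,-1/2}$ (where the input and output weights $\langle\tau+|k|^2\rangle^{-1/2}$ match and the kernel $\mathfrak K$ is cleanly self-adjoint-friendly); the paper then reaches $X^{s,b}_\epsilon\to X^{s,b-1}_\epsilon$ for some $b>\tfrac12$ by interpolating this with a crude $X^{s,0}_\epsilon\to X^{s,0}_\epsilon$ bound coming from $\|u^i\|_{L^\infty}\|u^j\|_{L^\infty}\lesssim\epsilon^{-d}$. Second, the polynomial loss in $\mathbb{E}\,\Tr(\mathfrak L^*\mathfrak L)^M$ is not an abstract $\epsilon^{-\kappa}$ but precisely $\epsilon^{-d}$, arising from the final sum over the $O(\epsilon^{-d})$ frequencies in the cube when closing the trace; this is exactly the loss your choice of large $M$ is designed to kill, and is the reason the operator is not ``effectively finite-dimensional'' in a trivial way (the time-frequency variable $\tau$ is continuous, and the trace is finite only because of the $\langle\omega\rangle^{-1}$ weights).
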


Turning to the nonlinear terms, they will be controled by the two following propositions.

\begin{proposition} \label{propbilinear} If $N \in \mathbb{N}$, $\mu>0$, $s> \frac{d}{2} - 1$, there exists $b>\frac{1}{2}$ and a set $E_{N,\mu,s,b}$ with probability $\mathbb{P} (E_{N,\mu,s,b}) \geq 1 - \epsilon^{\mu}$ such that on $E_{N,\mu,s,b}$,
$$
\left\| \chi(t)\int_0^t e^{i(t-s)\Delta} \mathcal{B}(u) \,ds  \right\|_{X^{s,b}_\epsilon} \lesssim_{N,\mu} \lambda^2  \epsilon^{\frac{1}{2} - \frac{d}{2}-2\mu} \| u \|_{X^{s,b}_\epsilon}^2
$$
\end{proposition}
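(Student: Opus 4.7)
The plan is to reduce via Duhamel to a trilinear bound on $\mathcal{B}(u)$ in the dual Bourgain space $X^{s,b-1}_\epsilon$, and then combine the multilinear $X^{s,b}_\epsilon$ estimates developed in Section~\ref{sectionnonlinear} with the bounds on the approximate solution from Corollary~\ref{aigleroyal}. By the standard energy estimate for the inhomogeneous Schr\"odinger equation in Bourgain spaces (Appendix~\ref{Xsbbasics}), for $b > 1/2$ close enough to $1/2$,
$$\left\|\chi(t)\int_0^t e^{i(t-s)\Delta} \mathcal{B}(u)\,ds\right\|_{X^{s,b}_\epsilon} \lesssim \|\chi(t)\mathcal{B}(u)\|_{X^{s,b-1}_\epsilon},$$
so it will suffice to bound $\|\chi(t)\mathcal{B}(u)\|_{X^{s,b-1}_\epsilon}$ by the right-hand side of the proposition on a suitably large event.

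Next, I would recognise the structure of $\mathcal{B}(u)$ as a Wick-ordered trilinear form. Using the identity $abc = P(a,b,c) + (2\pi)^{-d}\langle \bar a, b\rangle c + (2\pi)^{-d} a\langle \bar b, c\rangle$, the first three terms of $\mathcal{B}(u)$ collect (up to harmless complex-conjugate bookkeeping) into $2\lambda^2\chi(t) P(u,\bar u, v^{app})$ and the last two into $\lambda^2\chi(t) P(u, u, \bar{v^{app}})$. Each Wick-ordered trilinear piece is then estimated by the trilinear $X^{s,b}_\epsilon$ estimate of Section~\ref{sectionnonlinear}, placing two factors on $u$ (producing $\|u\|_{X^{s,b}_\epsilon}^2$) and one factor on $v^{app}$. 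The crucial input is that $v^{app}$ is Fourier-supported in the ball of radius $O(\epsilon^{-1})$ (since $A \in \mathcal{C}^\infty_0$) and satisfies the averaged $L^p_{t,x}$ bounds of Corollary~\ref{aigleroyal} with only an $\epsilon^{-\mu}$ loss, rather than the deterministic Bernstein loss $\epsilon^{-d/2}$; inserting these in place of a generic $X^{s,b}_\epsilon$-norm of the third factor produces the gain $\epsilon^{1/2-d/2-2\mu}$ of the proposition. The exceptional set $E_{N,\mu,s,b}$ is taken to be the event on which the bounds of Corollary~\ref{aigleroyal} all hold simultaneously. The few residual correction terms involving the scalar products $\langle u, v^{app}\rangle$, $\langle v^{app}, u\rangle$ and the norm $\|u\|_{L^2}^2$ are bounded by Cauchy-Schwarz combined with the embedding $X^{s,b}_\epsilon \hookrightarrow \mathcal{C}_t L^2_x$ (valid for $b>1/2$) and the $L^2$ bound on $v^{app}$ from Corollary~\ref{aigleroyal}; these turn out to be of lower order than the two main Wick-ordered pieces.

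The main obstacle is the underlying trilinear $X^{s,b}_\epsilon$ estimate at the scaling-critical exponent $s > d/2 - 1$. This requires a careful combination of Bourgain-type $L^p$ Strichartz estimates on $\mathbb{T}^d$ (whose subpolynomial losses are absorbable in the $\epsilon^{-\kappa}$ convention), a Littlewood-Paley decomposition adapted to the frequency scale $\epsilon^{-1}$ of $v^{app}$, and the correct trading of derivatives among the three factors to produce precisely $\epsilon^{1/2-d/2}$ when one of the factors is concentrated at scale $\epsilon^{-1}$. This technical heart is relegated to Section~\ref{sectionnonlinear}; once it is in hand, the present proposition follows by the packaging described above.
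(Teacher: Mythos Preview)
Your proposal is correct and follows essentially the same route as the paper. The paper first disposes of the scalar-product correction terms in $\mathcal{B}$ as ``even more simpler'' and reduces to $\lambda^2[2|u|^2 v^{app}+u^2\overline{v^{app}}]$; it then proves the estimate by duality and a dyadic decomposition of $u,u$ and the test function, combining the bilinear Strichartz estimate of Lemma~\ref{robin} with the bound $\|v^{app}\|_{L^q_2 L^\infty}\lesssim \epsilon^{-1/2-\mu-\kappa}$ (itself a consequence of the $L^p$ bounds in Corollary~\ref{aigleroyal} and the frequency localisation of $v^{app}$). Your packaging via the Wick-ordered operator $P$ is a cosmetic variant of the same argument: in fact your identities show $\mathcal{B}(u)=\lambda^2\chi(t)\big[2P(u,\bar u,v^{app})+P(u,\overline{v^{app}},u)\big]$ exactly, so there are no residual scalar-product terms left to treat separately.
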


\begin{proposition}
\label{proptrilinear}
Given $s> \frac{d}{2} - 1$ and $\kappa>0$, there exists $b>\frac{1}{2}$ such that
$$
\left\| \chi(t)\int_0^t e^{i(t-s)\Delta} \mathcal{T}(u) \,ds  \right\|_{X^{s,b}_\epsilon}  \lesssim \lambda^2 \epsilon^{2-d-\kappa} \| u \|_{X^{s,b}_{\epsilon}}^3
$$
\end{proposition}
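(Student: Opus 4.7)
\textbf{Proof proposal for Proposition \ref{proptrilinear}.} My plan is to treat this as a purely deterministic multilinear $X^{s,b}$ estimate, without invoking any randomness. The first step is the standard inhomogeneous retarded-Duhamel estimate for Bourgain spaces (available from the basic $X^{s,b}$ properties in the appendix referred to after \eqref{id:bourgainspaces}): for $b>1/2$ sufficiently close to $1/2$, and using the cutoff $\chi$ to localize in time,
\[
\Bigl\| \chi(t)\int_0^t e^{i(t-s)\Delta} F(s)\,ds\Bigr\|_{X^{s,b}_\epsilon}\;\lesssim\; \|F\|_{X^{s,b-1}_\epsilon}.
\]
Applied to $F=\mathcal{T}(u)=\chi(t)\lambda^2\bigl(|u|^2u-2(2\pi)^{-d}\|u\|_{L^2}^2 u\bigr)$, this reduces the proposition to the deterministic trilinear bound
\[
\bigl\| \chi(t)|u|^2 u\bigr\|_{X^{s,b-1}_\epsilon}+\bigl\| \chi(t)\|u\|_{L^2}^2 u\bigr\|_{X^{s,b-1}_\epsilon}\;\lesssim\; \epsilon^{2-d-\kappa}\|u\|_{X^{s,b}_\epsilon}^3.
\]

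For the Wick-subtracted scalar term $\chi(t)\|u(t)\|_{L^2}^2 u$, I would use the embedding $X^{0,b}_\epsilon\hookrightarrow C_t L^2_x$ for $b>1/2$ to pull $\|u(t)\|_{L^2}^2$ out as a bounded multiplier: $\|u(t)\|_{L^2}\lesssim \|u\|_{X^{0,b}_\epsilon}\leq \|u\|_{X^{s,b}_\epsilon}$. Since multiplication by a $C^\infty$ time-cutoff is bounded on $X^{s,b}_\epsilon$ for all $b\in\mathbb{R}$, this contribution is dominated by $\|u\|_{X^{s,b}_\epsilon}^3$ with \emph{no} negative power of $\epsilon$, hence is much better than the target (recall $\epsilon^{2-d-\kappa}\gg 1$ for $d\geq 2$).

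The heart of the matter is the genuinely trilinear piece $\chi(t)|u|^2u$. Here I would invoke the standard trilinear estimate for the cubic NLS on $\mathbb{T}^d$ above the scaling regularity $s_c=d/2-1$, originally due to Bourgain and established in the present dimensions via the $\ell^2$-decoupling Strichartz estimates of Bourgain--Demeter:
\[
\bigl\||u|^2u\bigr\|_{X^{s,b-1}(\mathbb{T}^d)}\;\lesssim\; \|u\|_{X^{s,b}(\mathbb{T}^d)}^3,\qquad s>\tfrac{d}{2}-1,\ b>\tfrac{1}{2}\text{ close to }\tfrac{1}{2}.
\]
To convert this unscaled statement to the $\epsilon$-scaled Bourgain spaces, I would decompose $u=\sum_N P_{\epsilon,N}u$ dyadically with the projectors defined in Section~\ref{notations}, and analogously on the output side. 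Each piece $P_{\epsilon,N}u$ is Fourier-supported on $|k|\lesssim N/\epsilon$, so that
\[
\|P_{\epsilon,N}u\|_{X^{s,b}_\epsilon}\;\sim\; \langle N\rangle^{s}\,\|P_{\epsilon,N}u\|_{X^{0,b}},
\]
while in the unscaled norm one has $\|P_{\epsilon,N}u\|_{X^{s,b}}\sim (N/\epsilon)^{s}\,\|P_{\epsilon,N}u\|_{X^{0,b}}$; this differs from the scaled norm by a factor $\epsilon^{-s}$ per input and $\epsilon^{s}$ per output. Applying the standard trilinear estimate to each frequency-localised triple $(P_{\epsilon,N_1}u,P_{\epsilon,N_2}u,P_{\epsilon,N_3}u)$ and summing via the $s>s_c$ orthogonality (high-high to low and high-high to high cases handled in the usual Littlewood--Paley fashion) yields an overall factor $\epsilon^{-2s}$. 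Choosing $s=\tfrac{d}{2}-1+\tfrac{\kappa}{2}$ produces exactly $\epsilon^{2-d-\kappa}$.

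The main obstacle is the bookkeeping of $\epsilon$-powers through the Littlewood--Paley decomposition in the scaled spaces, in particular verifying that the standard high-high-low and resonant cases of Bourgain's trilinear estimate survive the rescaling with no worse $\epsilon$-loss than $\epsilon^{-\kappa}$ (which is unavoidable because the Bourgain--Demeter estimate on rational tori carries a $\langle \mathrm{freq}\rangle^{\kappa}$ loss for every $\kappa>0$). Beyond this scaling bookkeeping, no genuinely new analytic input is required: the cutoff $\chi(t)$ restricts attention to the compact time-window $[-2,2]$ on which the standard $X^{s,b}$ theory applies without modification.
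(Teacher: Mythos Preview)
Your overall scheme is the right one (reduce via the inhomogeneous estimate, then prove a trilinear bound), but the central step---deducing the $\epsilon$-scaled trilinear estimate from the unscaled one by ``scaling bookkeeping''---does not close as written. If you use the unscaled Bourgain estimate $\||u|^2u\|_{X^{s,b-1}}\lesssim\|u\|_{X^{s,b}}^3$ globally and convert, the inequalities $\langle\epsilon k\rangle^s\le\langle k\rangle^s\le\epsilon^{-s}\langle\epsilon k\rangle^s$ yield $\||u|^2u\|_{X^{s,b-1}_\epsilon}\lesssim\epsilon^{-3s}\|u\|_{X^{s,b}_\epsilon}^3$, not $\epsilon^{-2s}$; for $d\ge3$ this is strictly worse than the target. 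The piecewise conversion does give $\epsilon^{-2s}$ on each quadruple $(N_1,N_2,N_3,N_4)$, but the black-box trilinear estimate applied to frequency-localised inputs carries no gain in the low-frequency parameters, so the Littlewood--Paley sums in $N_1,N_2$ (and the $\ell^2$ sum in $N_4$) do not close. What is missing is precisely the $\epsilon$-explicit bilinear Strichartz estimate
\[
\|P_{\epsilon,N_1}u\,P_{\epsilon,N_2}v\|_{L^2_tL^2_x}\lesssim N_1^{\frac d2-1+\kappa}\,\epsilon^{-\frac d2+1-\kappa}\|P_{\epsilon,N_1}u\|_{X^{0,b_0}}\|P_{\epsilon,N_2}v\|_{X^{0,b_0}}\qquad(N_1\le N_2,\ b_0<\tfrac12),
\]
which isolates the $\epsilon^{2-d}$ factor and the low-frequency gain needed to sum; this is exactly the route the paper takes (Lemma~\ref{robin}, then duality and the two cases $N_3\sim N_4$ versus $N_4\ll N_3$). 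Once you open up the trilinear estimate to this level you are no longer using it as a black box. A related symptom is your last sentence: setting $s=\tfrac d2-1+\tfrac\kappa2$ would only prove the proposition at that single regularity, whereas the statement is for every $s>\tfrac d2-1$; the paper's argument handles all such $s$ uniformly because the low-frequency gain comes from Lemma~\ref{robin}, not from the $\langle\epsilon k\rangle^s$ weight.

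Two smaller points. For the Wick-subtracted term, your justification is not quite right: multiplication by $g(t)=\|u(t)\|_{L^2}^2$ is not a multiplication by a fixed $C^\infty$ cutoff, and multiplication by a merely bounded function of $t$ is \emph{not} bounded on $X^{s,b-1}_\epsilon$ when $b-1<0$. The term is indeed harmless, but one should instead place it in $X^{s,0}_\epsilon=L^2_tH^s_{\epsilon}$ (pulling $g$ out in $L^\infty_t$) and interpolate, or simply note---as the paper does---that this contribution is dominated by the genuine cubic term and can be treated identically.
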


\subsection{Control of the error, proof of the first part of Theorem \ref{th:main}}  Our aim is to apply the Banach fixed point theorem in $B_{X^{s,b}_\epsilon}(0,\rho)$, where $s> \frac{d}{2}-1$, and $\rho>0$ will be fixed shortly, to the mapping
$$
\Phi: u \mapsto \chi(t) \int_0^t e^{i(t-s) \Delta} [\mathcal{L}(u) + \mathcal{B}(u) + \mathcal{T}(u) + \mathcal{E} ] \,ds 
$$
Note that $\frac{1}{T_{kin}}=\ep^{2-4\gamma}$ with $2-4\gamma>0$. Applying Corollary~\ref{aigleroyal}, the error term can be made $< \epsilon^M$, for any fixed $M$, in $X^{s,b}_{\ep}$ by choosing $N$ sufficiently big. This leads to the choice $\rho = 2 \epsilon^{M}$. Applying Proposition~\ref{prop:lineaire} with $\mu <2\gamma-1$, it appears that the linear term has an operator norm $\ll 1$. Similarly, applying propositions~\eqref{propbilinear} and~\ref{proptrilinear}, one checks easily that the bilinear and cubic term act as contractions on $B(0,\rho)$. Therefore, the Banach fixed point theorem gives a solution $v^{err}$, with norm $\| v^{err}\|_{X^{s,b}_\ep}\lesssim \epsilon^{M}$.
The fact that the solution is in $\mathcal{C}^\infty([0,1] \times \mathbb{R}^d)$ follows by propagation of regularity, since the data is smooth, and the application of the fixed point theorem gives a bound in a supercritical space.

Notice that, in the course of applying Corollary~\ref{aigleroyal}, Proposition~\ref{prop:lineaire} and~\ref{propbilinear}, we had to exclude each time a set $E$ of size $\epsilon^\mu$ in probability. The total set to exclude is thus of size $\leq C \ep^\mu$ and $C$ can be taken to be $1$ from a direct check on the proof.

\subsection{Comparison to the kinetic wave equation, proof of the second part of Theorem \ref{th:main}}
Let $E$ be the exceptional set obtained in the previous subsection. Expanding $u$ gives
$$
u = e^{i\lambda^2 \omega} \left[u^0 + u^1 + u^2 + \sum_{n=3}^N u^n + v^{err}\right],
$$
so that, following the computation in~\cite{BGHS},
\begin{align*}
&\mathbb{E}\left(  \mathbf{1}_E \left[  |\widehat{u}(k)|^2 - \epsilon^d |A(\epsilon k)|^2 \right] \right)=  \underbrace{\mathbb{E} \left( \mathbf{1}_E \left|\widehat{u^1}(k)\right|^2\right) + 2 \mathfrak{Re} \mathbb{E} \left( \mathbf{1}_E \left[\widehat{u^0}(k) \overline{\widehat{u^2}(k)} \right]\right)}_{\displaystyle \mbox{main term}} \\
& \qquad \qquad + \underbrace{\sum_{\substack{i,j \leq N \\ i+j \geq 4} }  \mathbb{E}\left( \mathbf{1}_E \left[\widehat{u^i}(k) \overline{\widehat{u^j}(k)} \right] \right) + 2 \sum_{i \leq N}  \mathfrak{Re} \mathbb{E} \left( \mathbf{1}_E \left[\widehat{u^i}(k) \overline{\widehat{v^{err}}(k)} \right] \right)+\mathbb{E} \left( \mathbf{1}_E|\widehat{v^{err}}(k)|^2 \right)}_{\displaystyle \mbox{higher order}}
\end{align*}
(notice that, due to a cancellation for $i+j = 3$, the first sum of the higher order term only involves $i+j \geq 4$). Using first the Cauchy-Schwarz inequality, and then the bound on the iterates $u^n$ in Corollary~\ref{aigleroyal}, as well as the bound on $v^{err}$ in $X^{s,b}_\epsilon$ (hence in $L^\infty_t L^2_x$) derived in the previous subsection, where we take $M$ sufficiently big,
\begin{align*}
\| \mbox{higher order} \|_{\ell^1_k} & \lesssim \mathbb{E} \left( \mathbf{1}_E \left(  \sum_{i+j \geq 4} \| u^i(t) \|_{L^2} \| u^j(t) \|_{L^2} + \| v^{err}(t) \|_{L^2} \sum_{0 \leq n \leq N} \| u^n(t) \|_{L^2}+ \| v^{err}(t) \|_{L^2}^2 \right) \right) \\
& \lesssim \epsilon^{-\mu} \frac{t}{T_{kin}^2}.
\end{align*}
Forgetting for a moment about $\mathbbm{1}_E$, the main term, following~\cite{BGHS}, can be written
\begin{align*}
\mbox{main term} = \frac{\epsilon^{3d} \lambda^4}{(2\pi)^{2d}} \sum_{k + \ell = m + n} \left| \frac{\sin(\frac{t}{2}\Omega(k,\ell,m,n))}{\Omega(k,\ell,m,n)} \right|^2 & |A(\epsilon k)|^2 |A(\epsilon \ell)|^2 |A(\epsilon m)|^2 |A(\epsilon n)|^2 \\
&  \left[ \frac{1}{|A(\epsilon k)|^2} + \frac{1}{|A(\epsilon \ell)|^2 |} - \frac{1}{|A(\epsilon m)|^2} - \frac{1}{|A(\epsilon n)|^2} \right] .
\end{align*}
where
$$
\Omega(k,\ell,m,n) = |k|^2 + |\ell|^2 - |m|^2 - |n|^2.
$$
Viewing the sum above as a nonlinear Riemann sum, it seems natural to replace the sum by an integral. This is indeed possible: an application of the Hardy-Littlewood-Ramanujan circle method~\cite{Vaughan} gives for $\eta>0$ the existence of $\nu>0$ such that, if $t < \epsilon^{\eta}$,
\begin{align*}
\mbox{main term} = \frac{\lambda^4 \epsilon^d}{(2\pi)^d} \left(  \int_{(\mathbb{R}^d)^3} \right. &  \delta(\epsilon k+\ell-m-n) \left| \frac{\sin(\frac t 2 \epsilon^{-2} \Omega(\epsilon k,\ell,m,n)}{\epsilon^{-2}  \Omega(\epsilon k,\ell,m,n)} \right|^2  |A(\epsilon k)|^2 |A( \ell)|^2 |A( m)|^2 |A( n)|^2 \\
& \qquad \left. \left[ \frac{1}{|A(\epsilon k)|^2} + \frac{1}{|A( \ell)|^2 |} - \frac{1}{|A( m)|^2} - \frac{1}{|A( n)|^2} \right] d\ell\,dm\,dn \right) (1 + O(\epsilon^{\nu})).
\end{align*}

Since $\int \frac{(\sin x)^2}{x^2} \,dx = \pi^2$, there holds, for $f \in \mathcal{C}^\infty_0$ as $\tau \rightarrow \infty$, 
$$
\int \left| \frac{\sin(\tau \Omega)}{\Omega} \right|^2 f(\Omega) \,d\Omega = \pi^2 \tau f(0) + O(1),
$$
and therefore (recalling $T_{kin}=\frac{1}{\lambda^4\ep^2}$)
\begin{align*}
\mbox{main term} &= 2^{-d-1} \pi^{2-d} \epsilon^d \frac{t}{T_{kin}} \int_{(\mathbb{R}^d)^3}  \delta(k+\ell-m-n) \delta(\Omega(k,\ell,m,n))  |A( k)|^2 |A( \ell)|^2 |A( m)|^2 |A( n)|^2 \\
&\quad \quad \quad\left[ \frac{1}{|A( k)|^2} + \frac{1}{|A( \ell)|^2 |} - \frac{1}{|A( m)|^2} - \frac{1}{|A( n)|^2} \right] d\ell\,dm\,dn + O_{\ell^1_k}\left( \frac{\epsilon^\beta t + \epsilon^2}{T_{kin}} \right).
\end{align*}

This concludes the proof of the main theorem, except that we need to put back the characteristic function $\mathbbm{1}_E$. But one can check that the random variable $|\widehat{u^1}_k|^2$ enjoys better integrability properties: this is achieved by raising it to a high power, and taking the expectation. Therefore, the error resulting from $\mathbbm{1}_E$ is at most $O(\epsilon^{c \eta})$.

\section{Encoding correlations by Feynman diagrams}

\label{sectionfeynman}
Our strategy is to relate the computation of the quantities involved in Proposition \ref{pr:LpLpinter} to the computation of integrals with oscillatory phases in high dimension, whose structure can be encoded by Feynman diagrams. We will give all details for the computation of $\| u^n\|_{L^2}$. Other quantities will be also estimated using similar diagrams, and their construction and associated notations will naturally adapt. The notation and graph analysis follow essentially that of \cite{LS2}.

\subsection{Diagrammatic representation}

\label{SubsectionDR}

Recall that $u^n$ is defined recursively by \fref{defun}. To obtain a formula for $u^n$, we use diagrams. We first define so-called interaction diagrams which encode three properties:
\begin{itemize}
\item As $u^n$ is the solution of a forced Schr\"odinger equation, solved via Duhamel formula, the diagram possesses time slices corresponding to a specific choice of ordering of the time variables.
\item As $u^n$ involves a sum over triplets $(i,j,k)$ with $i+j+k=n-1$, the edges and vertices of the diagram correspond to a particular choice of a triplet at each recursive iteration.
\item The sum defining $u^n$ involves complex conjugation: to each edge is associated a sign $\sigma$ which records conjugation.
\end{itemize}

More precisely, following \cite{LS2}, given an integer $n$, we first define the index set $I_n=\{1,2,...,n\}$. A graph with $n$ interaction has the total time $t$ divided into $n+1$ time slices of length $s_i$, $i=0,1,...,n$ whose index label the time ordering: from bottom to top in the graph. Associated with a time slice $i$ there are $1+2(n-i)$ "waves", with three of them merging into a single one for the next time slice. Each wave in each time slice is represented by an \textit{interaction edge} $e_{i,j}$ in the graph, with index set $ \mathcal I_{n}=\{(i,j), \ 0 \leq i \leq n,1\leq j\leq 1+2(n-i)\}$. The \textit{interaction history} is encoded by a vector $\ell = (\ell_1, \dots , \ell_n) \in \mathcal G_n:=I_{2n-1}\times I_{2n-3}\times ...\times I_1$. Edges of the $i$-th time slice are related to edges of the $i+1$-th as follows: an edge with index $(i,k)$ for $k< \ell_{i+1}$ is matched with the edge with index $(i+1,k)$ above it; the three edges with indices $ (i,\ell_{i+1})$, $(i,\ell_{i+1}+1)$ and $(i,\ell_{i+1}+2)$ merge through the vertex $v_{i+1}$ to form the edge with index $(i+1,\ell_i)$; and an edge with index $(i,k)$ for $k>\ell_{i+1}+2$ is matched with the edge with index $(i+1,k-2)$. Complex conjugation is encoded by the \textit{parity} $\sigma_{i,j}\in \{ \pm 1\}$ associated to each edge. Parity is defined recursively from top to bottom: $\sigma_{n,1}=+1$ or $\sigma_{n,1}=-1$ if the graph corresponds to $u^n$ or to $\bar u^n$. Then, parity is kept unchanged from an edge to the one below in absence of merging, and in case of a merging we require that $\sigma_{i,\ell_{i+1}}=-1$, $\sigma_{i,\ell_{i+1}+1}=\sigma_{i+1,\ell_i}$ and $\sigma_{i,\ell_{i+1}+2}=+1$. 

At the initial time slice $s_0$, below each of the edges of index $(0,j)$ for $1\leq j \leq 2n+1$ an \textit{initial vertex} $v_{0,j}$ is placed. At the final time slice $s_n$, a \textit{root vertex} $v_{R}$ is placed. Vertices which are neither initial vertices nor root vertices are called \textit{interaction vertices}.

The graph obtained this way is a tree. Orienting edges from bottom to top, we define the set of \textit{initial vertices below} $v_i$ as $\operatorname{In}(v_i) = \{ j\in (0,2n+1), \, \mbox{there exists an oriented path from}\, v_{0,j} \, \mbox{to} \, v_i\}$. An example is given below:

\vspace*{0.1cm}
\begin{center}
\includegraphics[width=12cm]{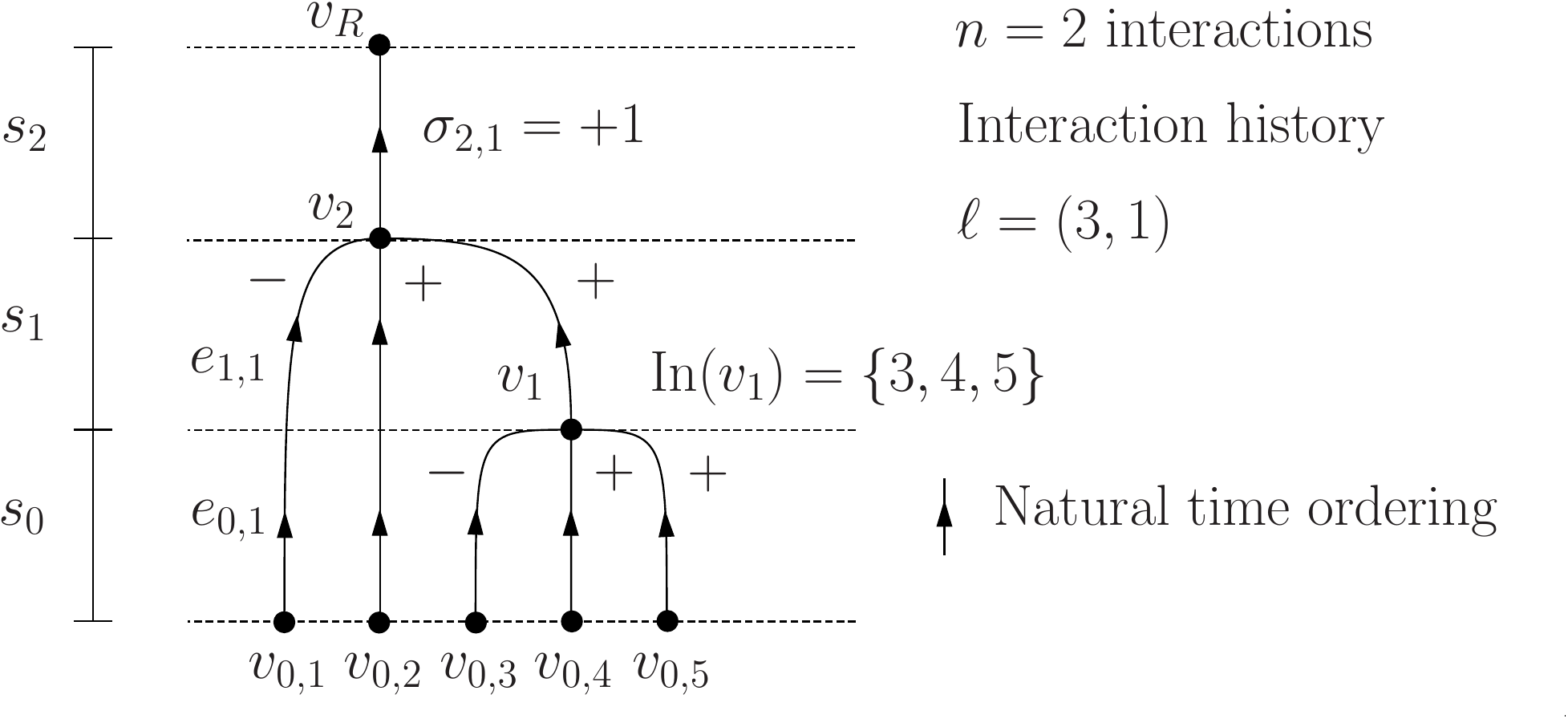}
\end{center}
\vspace*{0.1cm}

We now associate to each edge $e_{i,j}$ in the extended graph a frequency $k_{i,j}$. At each vertex corresponds a $\delta$ function ensuring that the sum of the frequencies associated to the edges below is equal to that of the frequencies for edges above, and that frequencies associated with Wick ordering are removed. These are the Kirchhoff rules for the graph. At the final vertex $v_R$, we impose the Dirac $\delta(k_{n,1}-k_R)$ where $k_R$ denotes the total output frequency. This gives the following formula for $\widehat{u^n} (k)$, where $p$ is the number of vertices $v_i$ whose edge above them carries a $-1$ parity sign
\begin{equation} 
\label{colvert}
\begin{split}
& \widehat{u^n}(t,k_R) = {e^{-it |k_R|^2}  (-1)^p \left(\frac{-i\lambda^2}{(2\pi)^d}\right)^n}\sum_{\ell\in \mathcal G_n} \sum_{\underline{k} \in \mathbb{Z}^{d \# \mathcal I_n}} \\
& \qquad \qquad \qquad  \int_{\mathbb R_+^{n+1}} \prod_{i\in I_{2n+1} }\widehat{u_0}(k_{0,i},\sigma_{0,i})  \prod_{k=1}^{n}e^{-i\Omega_k \sum_{j=0}^{k-1}s_j} \Delta_{\ell}( \underline{k}, k_R)\delta(t-\sum_{i=0}^{n}s_i) d \underline{s}
\end{split}
\end{equation}
where we used the shorthand notations 
\begin{itemize}
\item $\underline{k} =  (k_{i,j})_{(i,j) \in \mathcal{I}_n} \in \mathbb Z^{d \# \mathcal I_n}$
\item $\underline{s} = (s_0,\dots,s_n) \in \mathbb{R}_+^{n+1}$
\item $\widehat{u_0}(k,+1)=\widehat{u_0}(k)$ and $\widehat u_0(k,-1)= \widehat{ \overline{ u_0}}(k) = \overline{\widehat{ u_0}}(-k)$
\item $\Omega_k=|k_{k-1,\ell_k+2}|^2-|k_{k-1,\ell_k}|^2+\sigma_{k,\ell_k}\left( |k_{k-1,\ell_k+1} |^2- |k_{k,\ell_k}|^2\right)$ is the resonance modulus corresponding to the vertex $v_k$
\end{itemize}
and, finally, $ \Delta_{\ell}$ encapsulates the Kirchhoff law and frequency truncation at each vertex:
$$
\Delta_{\ell}(\underline{k},k_R) = \Delta_\ell(\underline{k}) \delta(k_{n,1} - k_R)
$$
with
\begin{align}
\nonumber & \Delta_\ell(\underline{k}) &= \prod_{i=1}^n \Bigl\{ \prod_{j=1}^{\ell_i-1}\delta(k_{i,j}-k_{i-1,j})\left(1-\delta(k_{i-1,\ell_i}+k_{i-1,\ell_i+2})-\delta(k_{i-1,\ell_i+1}+k_{i-1,\ell_i+1-\sigma_{i,\ell_i}})\right)\\
\label{id:defDeltaln} && \delta \left(k_{i,\ell_i}-\sum_{j=0}^2 k_{i-1,\ell_i+j} \right)\prod_{j=\ell_i+1}^{1+2(n-i)} \delta(k_{i,j}-k_{i-1,j+2}) \Bigr\} .
\end{align}
 
\subsection{Pairing two diagrams} 
\label{SubsectionPTD} We now ask how the expectation of $\| u^n \|_{L^2}^2$ can be represented diagrammatically. Consider two summands as in~\eqref{colvert} corresponding to different histories. Each is represented by a tree, which we will call the \textit{left subtree} and the \textit{right subtree}, with histories $\ell$ and $\ell'$ respectively. More generally, we adopt the notation that the frequencies, time variables, etc...  of the right subtree carry primes, and those of the left subtree do not. We will sometimes concatenate the initial frequencies (and parities, etc...) $(k_{0,i})$ and $(k'_{0,i})$ into a single vector as follows
$$
(\widetilde{k}_{0,1}, \dots , \widetilde{k}_{0,4n+2}) = (k_{0,1},\dots,k_{0,2n+1}, k'_{0,1},\dots,k'_{0,2n+1}).
$$
By Wick's formula,
$$
\mathbb E\left( \prod_{i=1}^{4n+2} \widehat {u_0}(\widetilde{k_{0,i}},\sigma_{0,i}) \right) =\ep^{d(2n+1)}\sum_{P\in \mathcal P(\ell,\ell')} 
\prod_{\{i,j\}\in P} |A(\epsilon \widetilde k_{0,i})|^2,
$$
where the \textit{pairing} $P$ is a partitition of $I_{4n+2}$ into pairs satisfying $\sigma_{0,i}\sigma_{0,j}=-1$ if $\{i,j\}\in P$, and $\mathcal P(\ell,\ell')$ is the set of such pairings.

We adopt the following diagrammatic representation of pairings. The top parities of the left and right subtrees are set to $\sigma_{n,1}=1$ and $\sigma_{n,1}' = -1$ respectively. The root vertices $v_R$ and $v_R'$ of the left and right subtrees respectively, are merged into a single vertex $v_R$. To this \textit{root vertex} $v_R$ is attached an edge $e_R$, with frequency $\xi_R = 0$, and we demand that Kirchhoff's rule applies at $v_R$, namely: $k_{n,1} + k'_{n,1} = 0$. If $\{i,j\} \in P$, we create a vertex $v_{\{i,j\}}$; then we add $e_{-1,i}$ and $e_{-1,j}$ between $v_{0,i}$ or $v_{0,j}$ respectively, and $v_{\{i,j\}}$. These new edges $e_{-1,i}$ are called \textit{upper pairing edges}. We attach an edge $e_{\{i,j\}}$  to $v_{\{i,j\}}$, and set the frequency $k_{\{i,j\}} = 0$. These new edges $e_{\{i,j\}}$ are called root pairing edges. Finally, we demand that Kirchhoff's rule applies at $v_{\{i,j\}}$ and $v_{0,i}$. In other words, $k_{0,i} + k_{0,j} = 0$ if $\{i,j\} \in P$, and $k_{-1,i} = k_{0,i}$ for all $i$. This construction corresponds to the following picture:

\begin{center} 
\includegraphics[width=14cm]{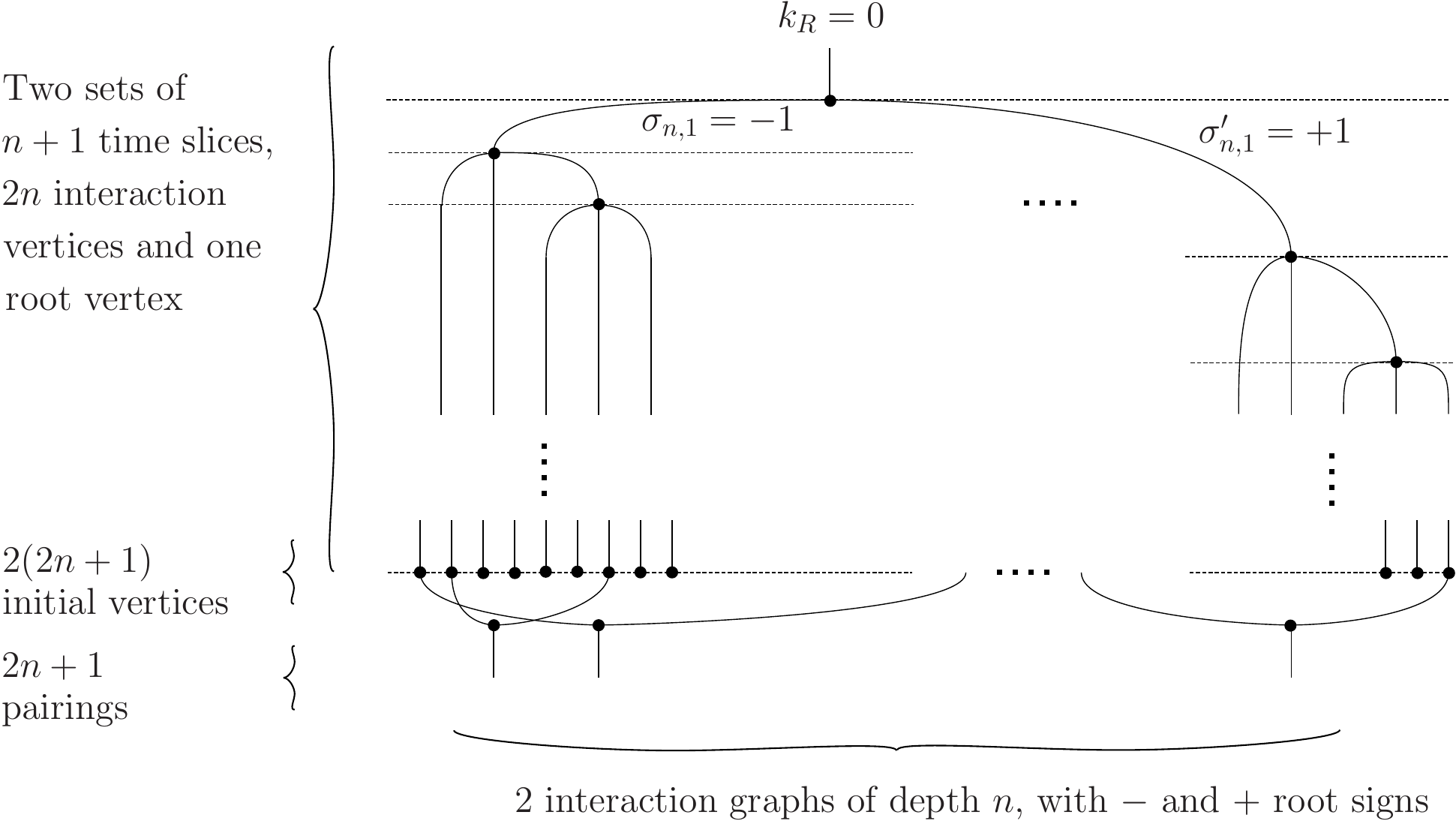}
\end{center}
\vspace*{0.5cm}

In more analytical terms, this corresponds to the formula
\be \label{LpLpexpression}
\mathbb E \| u^n \|_{L^2}^2 =\sum_{G,P} \mathcal F(G,P),
\ee
where $G=G_{\ell,\ell'}$ gives the left and right subtrees, $P \in \mathcal{P}(\ell,\ell')$ is the pairing which is considered, and
\begin{align}
\nonumber \mathcal F(G,P)&= \frac{(-1)^{p+p'} \lambda^{4n}\ep^{d(2n+1)}}{(2\pi)^{2dn}} \sum_{\substack{\underline{k},\underline{k'} \in \mathbb{Z}^{d(2 \# \mathcal{I}_n)}\\ \underline{k_{-1}} \in \mathbb{Z}^{d(4n+2)}}} \int_{\mathbb R_+^{n+1}\times \mathbb R_+^{n+1}} \prod_{k=1}^{n}e^{-i\Omega_k \sum_{j=0}^{k-1}s_j}\prod_{k=1}^{n}e^{i\Omega_k' \sum_{j=0}^{k-1}s_j'} \\
\label{id:formulamathcalFGPraw}& \qquad \qquad  \qquad  \qquad  \qquad \prod_{\{i,j\}\in P} |A(\epsilon \widetilde k_{0,i})|^2   \Delta_{\ell,\ell',P}(\underline{k},\underline{k}',0)\delta \left(t-\sum_{i=0}^{n}s_i \right)\delta \left(t-\sum_{i=0}^{n}s_i' \right) \, d\underline{s} \, d \underline{s}' 
\end{align}
where 
\begin{itemize}
\item $\underline{k} =  (k_{i,j})_{(i,j) \in \mathcal{I}_n}  \in \mathbb{Z}^{d \# \mathcal I_n }$ and similarly for $\underline{k'}$.
\item $\underline{k_{-1}} =  (k_{-1,i})_{i \in \{1,\dots,4n+2\}}$.
\item $\underline{s} = (s_0,\dots,s_n) \in \mathbb{R}_+^{n+1}$ and similarly for $\underline{s'}$,
\item $\Omega_k$ was defined previously below \fref{colvert}, and $\Omega_k'$ is defined in an analogous fashion,
\end{itemize}
and finally
\begin{align*}
\Delta_{\ell,\ell',P}(\underline{k},\underline{k}',\underline{k_{-1}},k_R) = \Delta_{\ell}(\underline{k}) \Delta_{\ell'}(\underline{k'}) \delta(k_R - k_{n,1} - k'_{n,1}) \Delta_P(\underline{k},\underline{k'},\underline{k_{-1}})
\end{align*}
with
$$
\Delta_P(\underline{k},\underline{k'},\underline{k_{-1}}) = \prod_{\{i,j\}\in P}\delta(\widetilde k_{0,i}- k_{-1,i}) \delta(\widetilde k_{0,j}- k_{-1,j}) \delta(k_{-1,i}+k_{-1,j}).
$$

We now come to the question of degenerate pairings: we say that a pairing $P$ has a degeneracy of type $(i,\{j,k\})$, with $j,k \in \{0,1,2\}$, if $\sigma_{i-1,\ell_i+j} \sigma_{i-1,\ell_i+k}=-1$ and if the vertices of $\operatorname{In}(e_{i-1,\ell_i+j})\cup \operatorname{In} (e_{i-1,\ell_i+k})$ are all paired together by $P$. We say a pairing $P$ is degenerate if it has a degeneracy. Degenerate pairings are responsible for the phase modulation $e^{i\lambda^2\omega}$ of our approximate solution. Wick renormalization, that cancels out this phase, is encoded by the frequency truncation $1-\delta(k_{i-1,\ell_i}+k_{i-1,\ell_i+2})-\delta(k_{i-1,\ell_i+1}+k_{i-1,\ell_i+1+\sigma_{i,\ell_i}})$ in the Kirchhoff law \fref{id:defDeltaln} above. The cancellation is not exact, however, but it gives a strong constraint at the degenerate vertex, as appears in the following lemma (which will be used in Lemma \ref{macareux2}).

\begin{lemma} \label{lem:degeneracy}
Given a positive integer $n$, interaction histories $\ell$ and $\ell'$, and a pairing $P$, assume it has a degeneracy of type $(i,\{j,k\})$. Then
$$
|\Delta_{\ell,\ell',P}(\underline{k},\underline{k}',\underline{k_{-1}},k_R)|\lesssim \delta \left(k_{i-1,\ell_i+1}+ k_{i-1,\ell_i+1+\sigma_{i,\ell_i}}\right)\delta \left(k_{i-1,\ell_i}+k_{i-1,\ell_i+2}\right).
$$
\end{lemma}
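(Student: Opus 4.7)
The key observation is that, under the degeneracy hypothesis, the two sibling frequencies $k_{i-1,\ell_i+j}$ and $k_{i-1,\ell_i+k}$ must sum to zero on the support of $\Delta_{\ell,\ell',P}$. This makes one of the two subtracted Dirac terms in the Wick-truncated Kirchhoff factor at the interaction vertex $v_i$ identically equal to $1$, collapsing that factor, up to sign, to the single remaining $\delta$. Multiplying this by the $\delta$ automatically imposed by the degeneracy recovers the product of two $\delta$'s claimed in the lemma, and all other factors in $\Delta_{\ell,\ell',P}$ are bounded by $1$ in absolute value.

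\textbf{Step 1 (frequency conservation).} At each internal vertex of either subtree, the $\delta$'s in $\Delta_\ell$ and $\Delta_{\ell'}$ from \eqref{id:defDeltaln} enforce Kirchhoff's rule $k_{a,\ell_a}=k_{a-1,\ell_a}+k_{a-1,\ell_a+1}+k_{a-1,\ell_a+2}$; a straightforward induction from bottom to top then gives, on $\operatorname{supp}\Delta_\ell\Delta_{\ell'}$,
$$
k_{i-1,\ell_i+r}=\sum_{m\in \operatorname{In}(e_{i-1,\ell_i+r})}k_{0,m},\qquad r\in\{0,1,2\}.
$$
The factor $\Delta_P$, combined with $k_{-1,i}=k_{0,i}$, imposes $k_{0,p}+k_{0,q}=0$ for every pair $\{p,q\}\in P$. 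By hypothesis, the disjoint union $S=\operatorname{In}(e_{i-1,\ell_i+j})\cup\operatorname{In}(e_{i-1,\ell_i+k})$ is entirely paired within itself, so summing the pair identities over $S$ gives $\sum_{m\in S}k_{0,m}=0$, and consequently
$$
k_{i-1,\ell_i+j}+k_{i-1,\ell_i+k}=0\quad\text{on }\operatorname{supp}\Delta_{\ell,\ell',P}.
$$

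\textbf{Step 2 (collapse of the truncation factor).} The recursive definition of the parities gives $\sigma_{i-1,\ell_i}=-1$, $\sigma_{i-1,\ell_i+1}=\sigma_{i,\ell_i}$, $\sigma_{i-1,\ell_i+2}=+1$, so the compatibility condition $\sigma_{i-1,\ell_i+j}\sigma_{i-1,\ell_i+k}=-1$ admits only three pairs $\{j,k\}$: $\{0,2\}$ (unconstrained), $\{0,1\}$ (forcing $\sigma_{i,\ell_i}=+1$), and $\{1,2\}$ (forcing $\sigma_{i,\ell_i}=-1$). In the first case, the identity of Step~1 reads $\delta(k_{i-1,\ell_i}+k_{i-1,\ell_i+2})=1$, so the Wick truncation factor
$$
1-\delta(k_{i-1,\ell_i}+k_{i-1,\ell_i+2})-\delta(k_{i-1,\ell_i+1}+k_{i-1,\ell_i+1-\sigma_{i,\ell_i}})
$$
collapses in absolute value to the remaining $\delta$ of the second type. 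In the second and third cases, a direct check using $1-\sigma_{i,\ell_i}\in\{0,2\}$ shows that the identity of Step~1 instead saturates the second $\delta$ of the truncation factor, which then collapses to $\delta(k_{i-1,\ell_i}+k_{i-1,\ell_i+2})$. In every case, reinserting the already-saturated $\delta$ (which equals $1$ on the support) reconstructs the product of the two $\delta$'s appearing in the statement.

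\textbf{Conclusion and main obstacle.} All remaining factors in $\Delta_{\ell,\ell',P}$ (the further Kirchhoff $\delta$'s in $\Delta_\ell$ and $\Delta_{\ell'}$, the pairing $\delta$'s at each $v_{\{p,q\}}$, the root delta $\delta(k_R-k_{n,1}-k'_{n,1})$, and the Wick truncation factors at the other interaction vertices, each of which is valued in $\{-1,0,1\}$) are bounded by $1$, which yields the stated inequality. The only delicate point is the combinatorial bookkeeping of the parities and of the $\sigma_{i,\ell_i}$-dependent shift in the second $\delta$ of the truncation factor; the mathematical content of the lemma is entirely concentrated in the single telescoping identity $k_{i-1,\ell_i+j}+k_{i-1,\ell_i+k}=0$ derived from Kirchhoff's law and the pairing assumption.
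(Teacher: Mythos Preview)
Your proof is correct and follows essentially the same approach as the paper: use Kirchhoff's law together with the pairing hypothesis to force $k_{i-1,\ell_i+j}+k_{i-1,\ell_i+k}=0$, so that one subtracted $\delta$ in the Wick truncation factor equals $1$ and the factor collapses (up to sign) to the remaining $\delta$. The only difference is presentational: the paper handles the three possibilities for $\{j,k\}$ by relabelling so that $j$ and the third index $\ell$ share the same parity, whereas you carry out the case analysis explicitly.
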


\begin{proof} Let $\ell$ be such that $\{ j,k,\ell \} = \{ 0,1,2 \}$, and assume without loss of generality that $j$ and $\ell$ have the same parity. Then the factor corresponding to Wick renormalization at the vertex $v_i$ is
$$
(1 - \delta(k_{i-1,\ell_i+j} + k_{i-1,\ell_i+k}) - \delta(k_{i-1,\ell_i+\ell} + k_{i-1,\ell_i+k}))
$$
Since the inital vertices below the edges $e_{i-1,\ell_i+j}$ and $v_{i-1,\ell_i+k}$ are all paired together, it is easy to see that $k_{i-1,\ell_i+j} + k_{i-1,\ell_i+k} = 0$. But then, for the above factor to be nonzero, there has to hold $k_{i-1,\ell_i+\ell} + k_{i-1,\ell_i+k} = 0$, which proves the lemma.
\end{proof}

Below is an example of a degenerate pairing, which is treated by the above Lemma:
\vspace*{0.5cm}
\begin{center} 
\includegraphics[width=10cm]{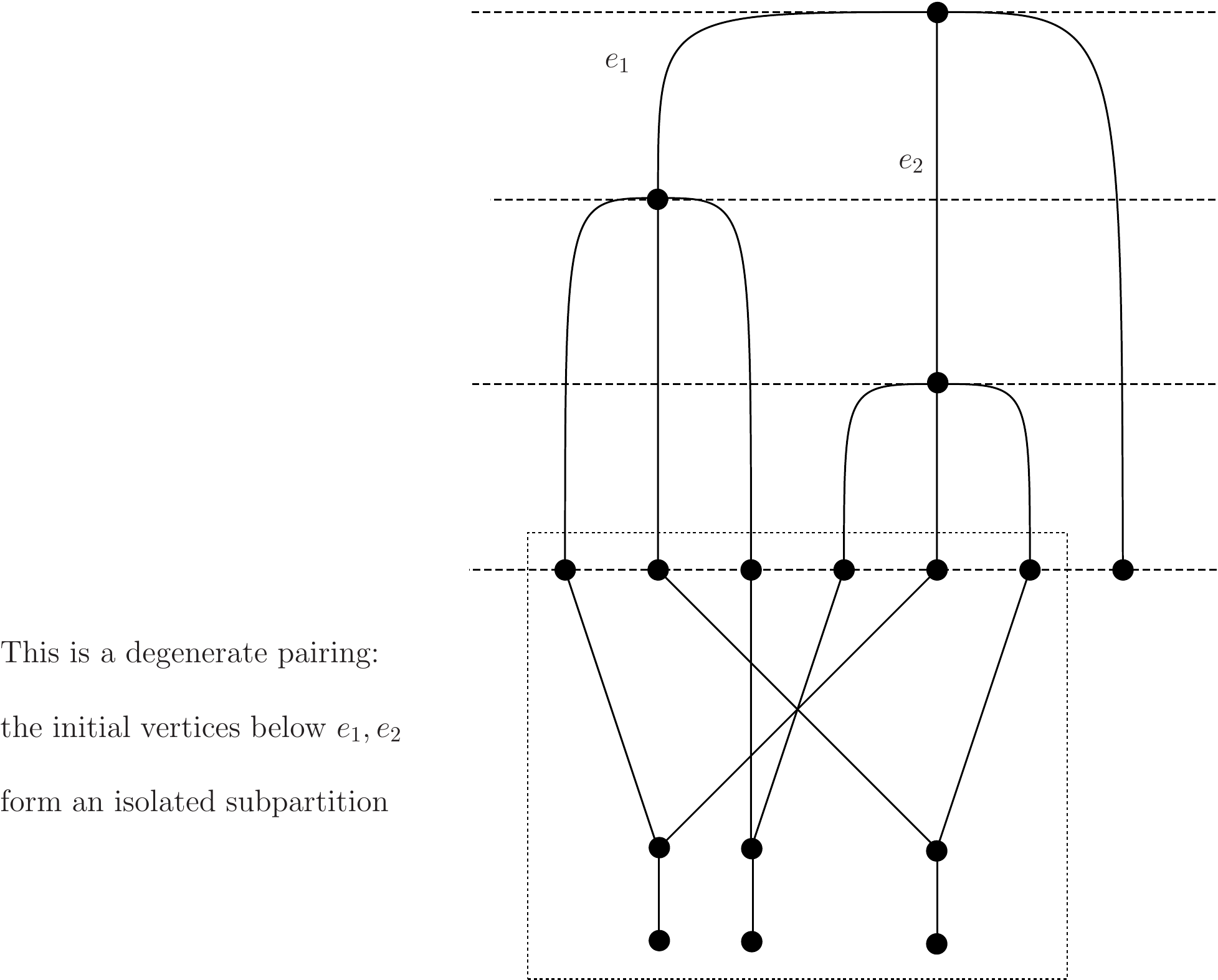}
\end{center}

\subsection{The resolvent identity}

\begin{lemma} \label{lem:resolventemoduli}
Let $m \in \mathbb{N}$, $e_1,\dots,e_m \in \mathbb{R}$, and $\eta>0$. Then
$$
\int_{\mathbb R_+^{m}} \prod_{k=1}^{m} e^{-i s_k e_k} \delta \left( \sum_{k=1}^{m} s_k - t \right) ds_1 \dots ds_m =\frac{e^{\eta t}}{2\pi}\int_{\mathbb R}e^{-i\alpha t}\prod_{k=1}^{m} \frac{i}{\alpha-e_k+i\eta} d\alpha.
$$
\end{lemma}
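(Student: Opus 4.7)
The plan is to recognize the left-hand side as the value at $t$ of an $m$-fold convolution, and then invert a Fourier transform. Writing $f_k(s) = e^{-ise_k}\mathbbm{1}_{s\geq 0}$, the delta function $\delta(\sum s_k - t)$ encodes exactly the convolution structure, so the left-hand side equals $(f_1 \ast \cdots \ast f_m)(t)$. The $f_k$ are merely bounded, not integrable, which is the reason the parameter $\eta > 0$ appears in the right-hand side: I would regularize by transferring the weight $e^{-\eta t}$ from the right-hand side into the convolution, using $e^{-\eta t} = \prod_k e^{-\eta s_k}$ on the support of the delta function. This replaces each $f_k$ by $\tilde f_k(s) = e^{-is e_k - \eta s}\mathbbm{1}_{s\geq 0}$, which now lies in $L^1(\mathbb{R})$.

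Next I would compute the elementary Fourier integral
\begin{equation*}
\widehat{\tilde f_k}(\alpha) \;=\; \int_0^\infty e^{-is(\alpha + e_k) - \eta s}\,ds \;=\; \frac{1}{\eta + i(\alpha + e_k)} \;=\; \frac{-i}{(\alpha + e_k) - i\eta}.
\end{equation*}
By the convolution theorem, $\widehat{\tilde f_1 \ast \cdots \ast \tilde f_m}(\alpha) = \prod_k \widehat{\tilde f_k}(\alpha)$, which decays like $\langle \alpha \rangle^{-m}$, and Fourier inversion therefore yields
\begin{equation*}
e^{-\eta t}\,(f_1 \ast \cdots \ast f_m)(t) \;=\; \frac{1}{2\pi} \int_{\mathbb{R}} e^{i\alpha t} \prod_{k=1}^m \frac{-i}{\alpha + e_k - i\eta}\,d\alpha.
\end{equation*}

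Finally, I would conclude by the change of variable $\alpha \mapsto -\alpha$, under which the exponential becomes $e^{-i\alpha t}$ and the product transforms into $\prod_k \frac{-i}{-\alpha + e_k - i\eta} = \prod_k \frac{i}{\alpha - e_k + i\eta}$; multiplying through by $e^{\eta t}$ then gives the claimed identity. I do not anticipate a real obstacle here, since everything reduces to an explicit one-dimensional Fourier computation; the only step requiring a slight care is the justification of Fourier inversion when $m = 1$ or $m = 2$, where the inverse transform is interpreted distributionally (or equivalently as a principal value, closed in the lower half-plane via residues).
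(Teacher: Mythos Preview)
Your proof is correct and is essentially the same as the paper's: both insert the damping factor $e^{-\eta s_k}$ (using that $e^{\eta(t-\sum s_k)}=1$ on the support of the delta), compute the resulting one-dimensional integrals $\int_0^\infty e^{is_k(\alpha-e_k+i\eta)}\,ds_k$, and read off the product of resolvents. The only cosmetic difference is that the paper writes $\delta(\sum s_k - t) = \frac{1}{2\pi}\int e^{i\alpha(\sum s_k - t)}\,d\alpha$ and swaps the order of integration directly, whereas you phrase the same step as the convolution theorem followed by a change of variable $\alpha\mapsto -\alpha$; also, only $m=1$ actually requires the distributional caveat, since for $m\geq 2$ the product is already in $L^1$.
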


\begin{proof}
 We use the identity $\delta(x) = \frac{1}{2\pi} \int e^{i \alpha x} \, d\alpha$, together with the fact that $e^{\eta(t - \sum s_k)} = 1$ on the support of the integral on the left-hand side, to write
 \begin{align*}
 \int_{\mathbb{R}_+^m}  \prod_{k=1}^{m} e^{-i s_k e_k} \delta \left( \sum_{k=1}^{m} s_k - t \right) ds_1 \dots ds_m = \int_{\mathbb{R}_+^m} e^{\eta(t - \sum s_k)} \prod_{k=1}^m e^{-i s_k e_k} \frac{1}{2\pi} \int e^{i\alpha(\sum s_k - t)} \,d\alpha \,ds_1 \dots ds_m
 \end{align*}
Switching the order of integration, this is
\begin{align*}
\dots = \frac{e^{\eta t}}{2 \pi} \int e^{-i \alpha t} \prod_{k=1}^m \int_0^\infty e^{is_k(\alpha - e_k + i \eta)} \,ds_k \,d\alpha = \frac{e^{\eta t}}{2\pi}\int_{\mathbb R}e^{-i\alpha t}\prod_{k=1}^{m} \frac{i}{\alpha-e_k+i\eta} d\alpha.
\end{align*}
\end{proof}

Choosing $\eta=1/t$ in the previous lemma, the identity \fref{id:formulamathcalFGPraw} is transformed into:
\begin{equation}
\label{pivert}
\begin{split}
& \mathcal F(G,P)=\frac{(-1)^{p+p'} e^2 \lambda^{4n}\ep^{d(2n+1)}}{(2\pi)^{2dn+2}}  \sum_{\underline{k},\underline{k'},\underline{k_{-1}}}  \int_{\mathbb R \times \mathbb R}  \prod_{\{i,j\}\in P} |A(\epsilon \widetilde k_{0,i})|^2  \\
&  e^{-i(\alpha+\alpha') t}\prod_{k=1}^{n}\frac{1}{\alpha -\sum_{j=k}^{n}\Omega_j+\frac{i}{t}}\prod_{k=1}^{n}\frac{1}{\alpha' -\sum_{j=k}^{n}\Omega_j'+\frac{i}{t}} \frac{1}{\alpha + \frac{i}{t}}\frac{1}{\alpha' + \frac{i}{t}}  \Delta_{\ell,\ell',P}(\underline{k},\underline{k}',0) \, d\alpha\, d\alpha'.
\end{split}
\end{equation}

\subsection{Construction of the spanning tree}

In the formula \fref{pivert}, the variables $k_{i,j}$ are related by Kirchhoff's law at each vertex, as appearing in the first factor in the second line of \fref{id:defDeltaln}. We aim at finding a minimal collection of edges from which, given their associated wave number $k_{i,j}$, one can retrieve the values of all other wave numbers. The first trivial simplification is to identify two edges when they are continued unchanged (no merging) from one slice to another: $(e_{i,k},k_{i,k})$ is identified with $(e_{i+1,k},k_{i+1,k})$ if $k<\ell_{i+1}$, and $(e_{i,k},k_{i,k})$ is identified with $(e_{i+1,k},k_{i+1,k-2})$ if $k>\ell_{i+1}+2$.
The vector space given by the Kirchhoff rules for the $k_{i,j}$ has dimension $2n+1$: indeed, there are $2(2n+1)$ initial frequencies, and $2n+1$ pairings (the condition that the frequencies add up to zero being induced by the pairing). Therefore, we will choose $2n+1$ \textit{free edges}, from whose frequencies all other frequencies can be reconstructed; these free edges will be determined by a spanning tree which will be constructed shortly.
Edges that are not free will be called \textit{integrated}. 

Prior to stating our main result, we define a natural ordering for vertices as follows: the right graph before the left one, and within each graph, from top to bottom; in other words,  $v_1'<\dots < v_n' < v_1 < \dots < v_n$.
So as to be able to compare edges in paired diagrams, we extend this order to interaction edges as follows: if $(G,P)$ is a as in Section~\ref{SubsectionPTD}; for two interaction edges $e$, $e'$, we write $e\leq e'$ if:
\begin{itemize}
\item either $e$ belongs to the interaction graph on the right and $e'$ to that on the left.
\item or $e$ and $e'$ belong to the same interaction graph (left or right), and end at vertices, $v_i$ and $v_j$ or $v_i'$ and $v_j'$, with $i\leq j$.
\end{itemize}
For this order, edges which lie below the same vertex are considered equal. Finally, we write $e>f$ if $e \geq f$ but $f \ngeq e$. This order is the natural time ordering inside each graph (left or right), and corresponds to the fact that we will integrate over the graph according to the natural time ordering of the time slices, first over the graph on the right, then over the graph on the left.

\begin{theorem} \label{th:spanning}

Consider a frequency graph $G$ with pairing $P$. There exists a complete integration of the frequency constraints \fref{id:defDeltaln} in the following sense. There exists a subset of free interaction edges:
$$
\mathcal E^f =\{e^f_1,...,e^f_{2n+1} \},
$$
with associated frequencies $(k^f_i)_{1\leq i \leq 2n+1}$, satisfying the following properties.
\begin{itemize}
\item \emph{Basis property}: On the vectorial subspace of $\mathbb R^{2 \# \mathcal I_n+4n+2}$ determined by the Kirchhoff rules encoded in $\Delta_{\ell,\ell'}$, the family $(k^f_i)_{1\leq i \leq 2n + 1} $ is a basis in the following sense: the map $(k^f_i)_{1 \leq i \leq 2n+1} \to (\underline{k},\underline{k'})$ is a linear bijection.
\item \emph{Time ordering for the spanning}: if $1\leq i\leq j\leq 2n+1$ then $e^f_i\leq e^f_j$ for the natural time ordering of the graph. Furthermore, if $e$ is an integrated interaction edge, then its associated wave number is given by:
$$
k_e=\sum_{1\leq i \leq 2n+1} c_{i,e} k^f_i \quad \mbox{with} \quad c_{i,e}\in \{-1,0,1\},
$$
and $c_{i,e}=0$ whenever $e^f_i < e$ for the natural time ordering of the diagram.
\end{itemize}

\end{theorem}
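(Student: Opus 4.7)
My strategy is to recast the equality constraints encoded in $\Delta_{\ell,\ell',P}$ as flow conservation on a suitable auxiliary graph $\Gamma$, and then to build the spanning tree via a Kruskal-type greedy algorithm that processes edges in increasing natural time order.

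I first define $\Gamma$ to have vertex set consisting of all interaction vertices $v_i, v'_i$, all initial vertices, the root $v_R$, and all pairing vertices $v_{\{i,j\}}$; and edge set consisting of the interaction edges (with continuations identified, giving $3n+1$ per subtree, hence $6n+2$ in total) together with the $4n+2$ upper pairing edges $e_{-1,i}$. Orienting interaction edges upward in time and upper pairing edges from pairing toward initial vertices, the equality constraints in $\Delta_\ell$, $\Delta_{\ell'}$, $\Delta_P$ and the root relation $k_{n,1}+k'_{n,1}=0$ become exactly the flow conservation equations at the corresponding vertex of $\Gamma$, the fixed-zero stubs $e_R$ and $e_{\{i,j\}}$ playing the role of boundary terms (the Wick truncation factors $(1-\delta-\delta)$ appearing in $\Delta_\ell$ are inequality restrictions and do not enter the cycle-space analysis). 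A direct count gives $|V(\Gamma)|=8n+4$ and $|E(\Gamma)|=10n+4$, and $\Gamma$ is connected since from $v_R$ one reaches every interaction and initial vertex through the two subtrees, and every pairing vertex through an upper pairing edge. Hence the cycle space, which by construction equals the solution space of the Kirchhoff rules, has dimension $|E(\Gamma)|-|V(\Gamma)|+1=2n+1$, matching the expected number of free edges.

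Next, I apply Kruskal's algorithm with the ordering: first the upper pairing edges in any fixed order, then the interaction edges in increasing natural time order, with ties (edges terminating at a common vertex) broken by an arbitrary fixed convention. Each processed edge is added to the growing tree if it does not complete a cycle with the edges already there; otherwise it is declared a free edge $e^f_i$, indexed in the order it is encountered. The upper pairing edges constitute a disjoint union of paths (a pairing vertex together with its two adjacent initial vertices) and are therefore all added without creating cycles. Among the $6n+2$ interaction edges, $4n+1$ are then added to yield a spanning tree with $|V(\Gamma)|-1=8n+3$ edges, and the remaining $2n+1$ are the free edges $e^f_1,\ldots,e^f_{2n+1}$.

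The three claimed properties now follow from standard matroid theory on $\Gamma$. Since the tree edges form a spanning tree, the Kirchhoff equations uniquely determine every tree edge frequency from the free ones, which gives the basis property as a linear bijection onto the cycle space. The labeling yields $e^f_i\leq e^f_j$ for $i\leq j$ by construction. For an integrated edge $e$, the coefficient $c_{i,e}$ in $k_e=\sum_i c_{i,e}k^f_i$ is nonzero precisely when $e$ belongs to the fundamental cycle of $e^f_i$ in the final tree; but this fundamental cycle coincides with the cycle that existed in the partial tree at the moment $e^f_i$ was processed, so $e$ was already in the tree and had natural order $\leq e^f_i$, i.e., $e^f_i\geq e$, which is the contrapositive of the claimed implication. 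The bound $c_{i,e}\in\{-1,0,1\}$ comes from the signed incidence of each edge in a single fundamental cycle being at most one in absolute value.

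The main obstacle I expect to encounter is the bookkeeping required to match the algebraic formulas for $\Delta_{\ell,\ell',P}$ with the graph $\Gamma$: verifying that the initial identifications $k_{-1,i}=k_{0,i}$ and the pairing sums $k_{-1,i}+k_{-1,j}=0$ are correctly captured as flow conservation at initial and pairing vertices respectively, and confirming that the root equation $k_{n,1}+k'_{n,1}=0$ is an automatic consequence of the other constraints (since each top frequency equals the signed sum of its initial frequencies, and pairings force these to cancel), so that the redundancy is absorbed into the cycle-space dimension count. Once this matching is established, the greedy construction and its matroid-theoretic consequences deliver all the required properties directly.
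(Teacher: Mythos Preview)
Your argument is correct and is essentially the same construction as the paper's, rephrased in the language of Kruskal's algorithm and fundamental cycles. The paper also starts by adding all upper pairing edges, then processes interaction vertices in the natural time order $v_1',\dots,v_n',v_1,\dots,v_n$, adding each of the three edges below a vertex if no loop is created; it then proves the time-ordering property by the same observation you use, namely that the cycle created when a free edge $e^f_i$ is rejected lies entirely in the partial tree at that moment, hence consists of edges with order $\le e^f_i$. Your fundamental-cycle formulation makes the coefficient bound $c_{i,e}\in\{-1,0,1\}$ immediate.

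The one substantive difference is your tie-breaking ``by an arbitrary fixed convention.'' The paper instead always adds the \emph{right} edge below each interaction vertex first (this never creates a loop, since the vertex is isolated from the partial forest at that stage). While your convention is enough for Theorem~\ref{th:spanning} as stated, the paper's specific choice is used downstream: it guarantees that the rightmost edge below every interaction vertex is integrated, which underlies the degree classification (each vertex has degree $0$, $1$, or $2$) and Lemmas~\ref{lem:Lpinter}, \ref{lem:comptage}, \ref{lem:degreezerofirst}. If you intend to proceed to those results, you should fix the tie-breaking to ``right, then middle, then left'' to match.
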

\begin{proof} This proof follows closely the work of Lukkarinen and Spohn~\cite{LS2}. The idea is to construct iteratively a \textit{spanning tree}, which will determine which edges are free, and which are integrated. The algorithm is as follows: first, add all upper pairing edges $(e_{-1,i})_{1\leq i \leq 2+4n}$ to the spanning graph. Then, consider interaction vertices one by one, according to the natural ordering of the graph: first $v_1', \dots ,v_n'$, then $v_1, \dots ,v_n$. 

\vspace*{0.5cm}
\begin{center}
\includegraphics[width=15cm]{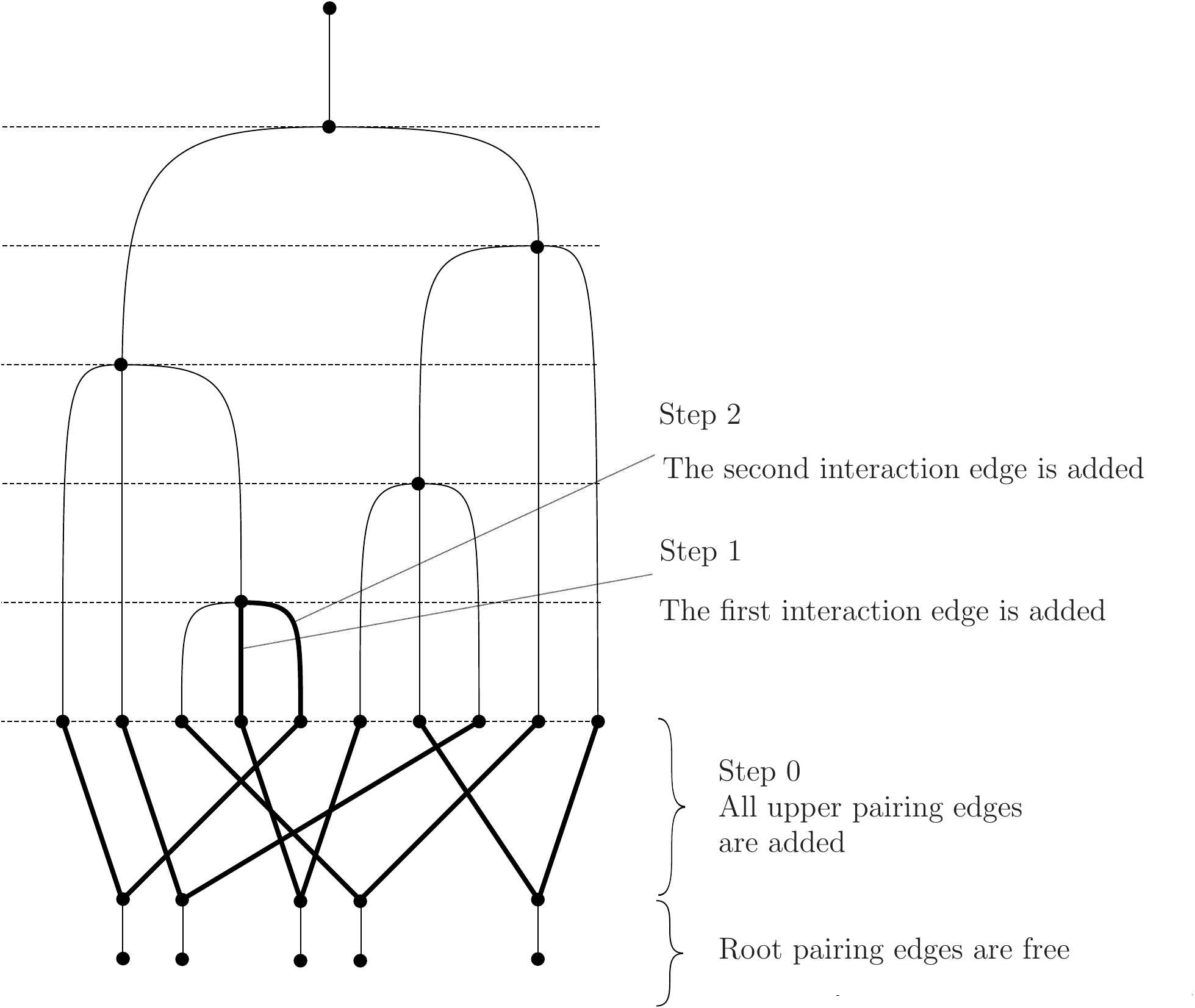}
\end{center}
\vspace*{0.5cm}

In this iterative procedure, suppose we reached $v_k'$ (the procedure is identical for $v_k$). First we add the edge on the right below it. Next, we consider the middle edge:  if adding it does not create a loop with the spanning tree that has been constructed so far, we add it to the spanning tree, and if it does we leave it free. We do the same for the left edge, and then move on to the next vertex.
After all interaction vertices have been treated in this manner, we add the edge between $v_n'$ and $v_R$, and finally the root edge on top of the diagram.

\vspace*{0.5cm}
\begin{center}
\includegraphics[width=15cm]{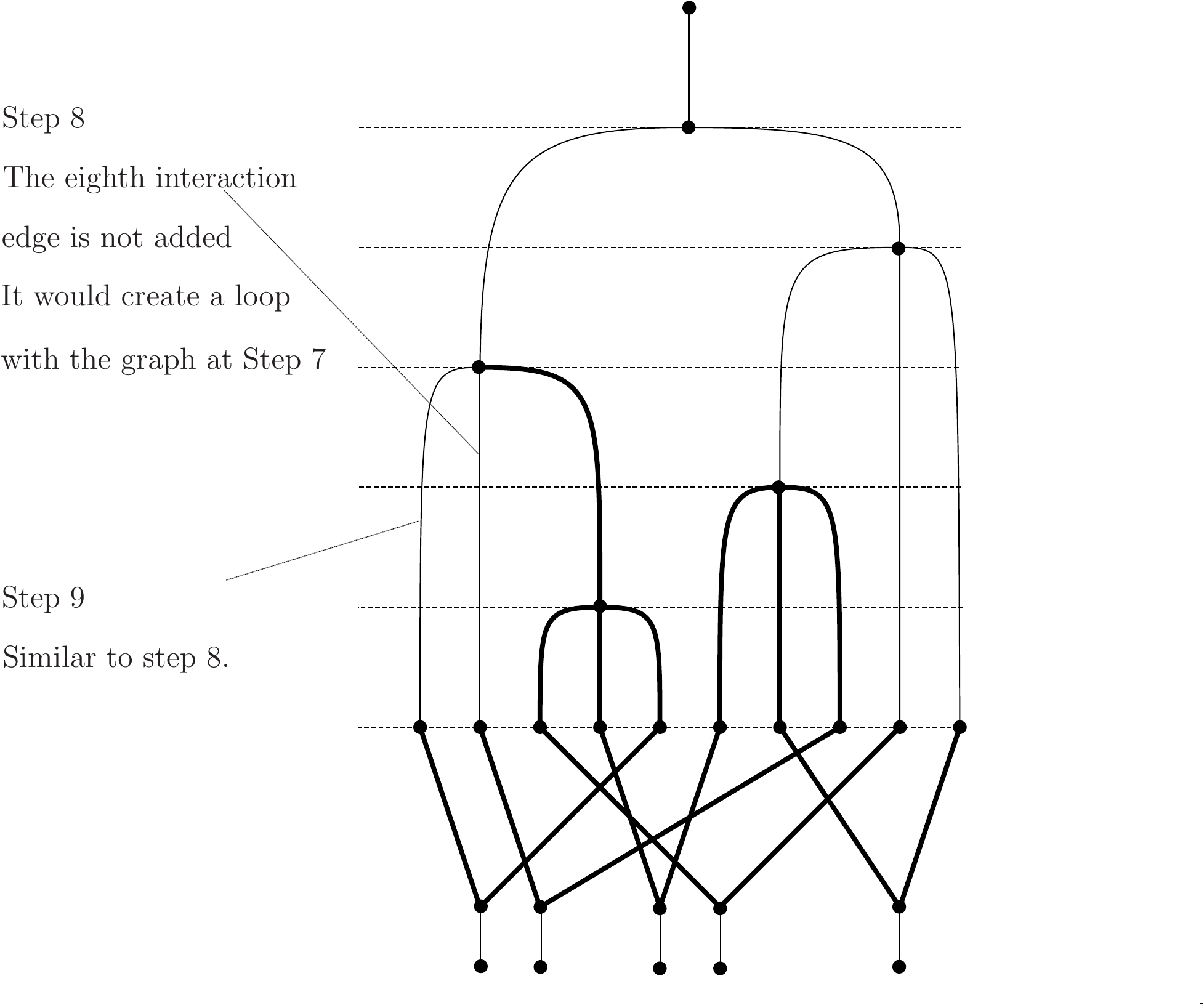}
\end{center}
\vspace*{0.5cm}
After the procedure explained above is completed, we declare all the edges which have been added to the graph \textit{integrated}, and the ones which have not \textit{free}.

The graph obtained at the end of the algorithm is a tree, as by construction it has no loop. In addition, each vertex is connected to the root vertex by a unique path. Indeed, let us prove the following property: in constructing the spanning tree, after the step at which a vertex $v$ is considered, given any initial vertex $v_0\in \operatorname{In}(v)$ below $v$, there exists a path between $v_0$ and $v$ in the spanning tree under construction. To prove the property, assume we reach $v$ in the iterative construction, $e=\{v',v\}$ is an edge below $v$, $v_0\in \operatorname{In}(v')$ an initial vertex below $v'$, and that there is a path from $v_0$ to $v'$. If $e$ is added, there is thus a path from $v_0$ to $v$. If $e$ is not added, this is because this would create a loop, so that there already existed a path between $v'$ and $v$, and thus there is also a path from $v_0$ to $v$. This shows the property by induction. Notice that at least one initial vertex of the left graph is paired with one of the right graph, as there is an odd number $2n+1$ of them. Given any interaction vertex $v$, by a suitable application of the property to $v$, $v_n'$ (which is connected to the root), and to $v_n$ (if $v$ is in the left graph), we thus obtain that there is a path between $v$ and the root.

 We call this tree the \textit{spanning tree}. It carries an orientation, defined as follows: an integrated edge $e=\{v,v' \}$ goes from $v$ to $v'$ if $v'$ belongs to the path from $v$ to the root vertex. This also defines a partial order: we say that $u \preceq w$ if $w$ belongs to the path from $u$ to the root vertex; in particular, $u \preceq u$. We denote by $\mathcal P(u)=\{w, \ w\preceq u\}$ the set of vertices $w$ such that $u$ belongs to the path from $w$ to the origin. Given an integrated edge $e$ going from $v$ to $v'$, the number of edges on the path from $v'$ to the root is the \textit{distance} to the root. An integrated edge $e$ going from $v$ to $v'$ is a leaf of the spanning tree if $\mathcal P(v)=\emptyset$.
\vspace*{0.5cm}
\begin{center} 
\includegraphics[width=15cm]{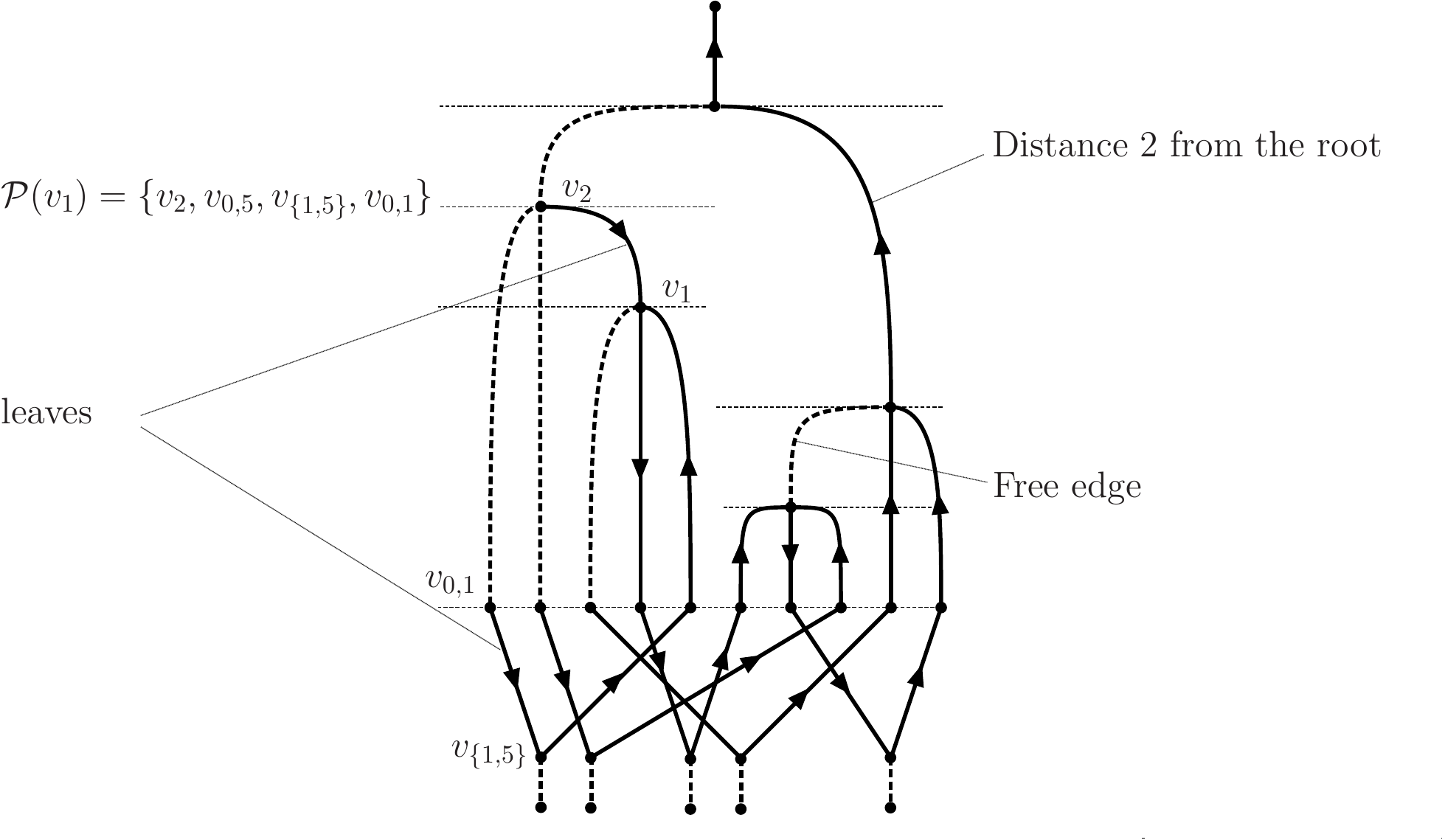}
\end{center}
\vspace*{0.5cm}
Frequencies of the edges belonging to the spanning tree are expressed in function of free frequencies. Given an edge $e$, and $v$ one of its vertices we define the parity of the edge with respect to the vertex as:
$$
\sigma_v(e)=\left\{\begin{array}{l} +1 \quad \mbox{if $e$ is one of the edges above $v$ for the natural ordering}, \\-1\quad \mbox{if $e$ is one of the edges below $v$ for the natural ordering}.\end{array} \right.
$$
Given a vertex $v$, $\mathcal F(v)$ denotes the set of free edges $f$ that have one extremity at $v$. Given $e=\{v,v'\}$ an integrated edge going from $v$ to $v'$, the formula for its associated frequency is then
\be \label{id:integrationmomenta}
k_e= -\sigma_{v}(e) \sum_{w\in \mathcal P(v) ,\;f\in \mathcal F(w)} \sigma_w(f)k_f.
\ee

This formula can be proven by induction on the distance to the root vertex.
\begin{itemize}
\item Elements whose distance is maximal are leaves, so let us assume first that $e$ is a leaf, $e=\{v,v'\}$ with $e$ going from $v$ to $v'$. All other edges having $v$ as an extremity are free, and $\mathcal P(v)=\{v\}$. The Kirchhoff law at $v$ is then
$$
\sigma_v(e)k_e+\sum_{f\in \mathcal F(v)}\sigma_v (f)k_f=0,
$$ 
which implies \fref{id:integrationmomenta} for the leaf $e$
\item Next, assume that the formula holds for edges whose distance is $\geq D$, and consider an edge $e=\{v,v'\}$ going from $v$ to $v'$, at distance $D-1$ from the root. Denoting $\mathcal I(v)$ the integrated edges ending at $v$ (for the orientation of the spanning tree), the Kirchhoff law at $v$ gives
$$
\sigma_v(e)k_e+\sum_{f\in \mathcal F(v)}\sigma_v (f)k_f+\sum_{e'\in \mathcal I(v)}\sigma_v(e')k_{e'}=0,
$$
from which the desired formula for $e$ follows after substituting to $k_{e'}$ its expression in terms of free frequencies.
\end{itemize}

The iterative construction which was described above ensures that Kirchhoff's law holds at every vertex. Indeed, for each vertex $v$, there is a single attached integrated edge $e$ which is oriented away from $v$. When the edge $e$ is considered in the above construction, the corresponding frequency $k_e$ is chosen so that Kirchhoff's law holds at $v$. When the construction ends, Kirchhoff's law holds at all interaction vertices. There remains $v_R$, for which this follows from the fact that all initial frequencies add up to zero.

Overall, this proves that, for any choice of the free frequencies, there is a unique choice of the integrated frequencies so that Kirchoff's law holds throughout the graph. Denoting $\mathcal{E}$ the vector space of all frequencies $\underline{k}, \underline{k'}$ which satisfy Kirchhoff's law at all vertices, this has the following implication. A basis of $\mathcal{E}$ can be written under the form $k_f^i = \delta{ij}$, with the integrated frequencies being determined by~\eqref{id:integrationmomenta}. Another basis of $\mathcal{E}$ can be constructed by selecting the initial frequencies $\widetilde{k}_{0,i}$. Since both bases have the same cardinal, this implies that $n_f=2n+1$.

To finish the proof of Theorem \ref{th:spanning}, we need to show that if $e$ is an integrated edge, then it is only a linear combination of free frequencies appearing after $e$ for the natural time ordering of the diagram. Assume $f=\{ v',v\}$ is a free edge, with $v'$ before $v$ for the natural time ordering. This means that during the construction of the spanning tree, at the step where the vertex $v$ is considered, $f$ is not added as this would create a loop in the spanning tree in construction. At that step, all edges in the spanning tree are before $f$ for the natural time ordering. Hence there exists a path $\widetilde p$ from $v$ to $v'$, and all its edges are before $f$ for the natural time ordering. Also, there exist unique paths $p$ and $p'$ going from $v$ to the root and from $v'$ to the root respectively. These paths intersect at a vertex $v_0$. By their uniqueness, $v_0$ has to belong to $\widetilde p$. Consider now the formula above: $k_f$ can only appear in the integrated frequencies on the paths from $v$ and $v'$ to the root. Moreover, after the vertex $v_0$, the two contributions from $v$ and $v'$ in this formula cancel. Hence $k_f$ can only appear in the integrated frequencies on the path from $v$ to $v_0$, and in the integrated frequencies on the path from $v'$ to $v_0$. These belong to $\widetilde p$ hence are indeed before $k_f$ for the natural time ordering. This also shows that $c_{i,e} \in \{ -1,0,1 \}$.
\end{proof}

\subsection{Degrees of vertices} \label{subsec:kirchhoff}
For any interaction vertex, we consider the three edges below it and define its \textit{degree} to be the number of free edges amongst them.

Any interaction vertex is necessarily of degree $0$, $1$ or $2$. Indeed, in the determination of the free edges in Theorem \ref{th:spanning}, by construction, the edge on the right below each vertex always belongs to the minimal spanning tree and hence is not free. The following Lemma describes how the frequencies associated to free edges below one interaction vertex appear in the decomposition of the frequencies associated to the other integrated edges below this vertex.

\begin{lemma} \label{lem:Lpinter}
The following holds true.
\begin{itemize}
\item \emph{Degree $1$ vertex:} Assume $v$ is a degree one vertex. Then the edges below it are always of the form $\{f,e,e'\}$ (unordered list), where $f$ is the free edge, and where the formulas giving $k_e$ and $k_{e'}$ in terms of the free edges are $k_e=-k_f+G$ and $k_{e'}=G'$, where $G$ and $G'$ are independent of $k_f$.
\item \emph{Degree $2$ vertex:} Assume $v$ is a degree two vertex. Then the edges below it are always of the form $\{f,f',e\}$ (unordered list), where $f$ and $f'$ are the free edges, and where the formula giving $k_e$ in terms of the free edges is $k_e=-k_f-k_{f'}+G$, where $G$ is independent of $k_f$ and $k_{f'}$.
\end{itemize}
\end{lemma}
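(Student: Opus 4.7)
The plan is to combine Kirchhoff's law at the vertex $v$ with the time-ordering clause of Theorem~\ref{th:spanning} on how integrated edges are expanded in the free basis. Writing the Kirchhoff identity at $v$ as
\[
k_\uparrow = k_1 + k_2 + k_3,
\]
where $k_\uparrow$ is the frequency of the edge $e_\uparrow$ above $v$ and $k_1, k_2, k_3$ are the three frequencies below, the crucial preliminary step is to observe that $k_\uparrow$, once expanded in the free basis, cannot contain any free frequency $k_f$ carried by an edge $f$ below $v$. Indeed, if $e_\uparrow$ is integrated this follows from the time-ordering clause of Theorem~\ref{th:spanning}: $e_\uparrow$ ends strictly above $v$, hence $e_\uparrow > f$, so $c_{i,e_\uparrow} = 0$ for $e_i^f = f$. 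If on the other hand $e_\uparrow$ is free, then $k_\uparrow$ is itself a basis vector and is distinct from $k_f$, so its decomposition puts $0$ on $k_f$.

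The degree~2 case follows immediately: with free edges $f, f'$ and integrated edge $e$ below $v$, Kirchhoff rearranges to $k_e = -k_f - k_{f'} + G$ with $G := k_\uparrow$ independent of $k_f, k_{f'}$ by the preliminary step.

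The degree~1 case is the more delicate one and is where the range $\{-1,0,1\}$ from Theorem~\ref{th:spanning} is essential. Let $f$ be the free edge and $e_1, e_2$ the two integrated edges below $v$, and denote by $c_1, c_2 \in \{-1,0,1\}$ the respective coefficients of $k_f$ in the free-basis expansions of $k_{e_1}$ and $k_{e_2}$ (nothing a priori forces these to vanish, since $f, e_1, e_2$ all end at $v$ and are therefore equal in the natural time order). Reading off the coefficient of $k_f$ in the identity $k_{e_1}+k_{e_2} = k_\uparrow - k_f$ and using that $k_\uparrow$ contributes $0$, one gets
\[
c_1 + c_2 = -1.
\]
Within $\{-1,0,1\}$ the only solutions are $(c_1,c_2) \in \{(-1,0),(0,-1)\}$. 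Relabeling $e_1, e_2$ as $e, e'$ so that the $-1$ occurs on $e$, this yields exactly $k_e = -k_f + G$ and $k_{e'} = G'$ with $G, G'$ independent of $k_f$.

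The one point that requires care is the preliminary step when $v = v_n$ or $v = v_n'$: then $e_\uparrow$ joins $v_R$ rather than another interaction vertex, and the natural time order was only defined between edges ending at interaction vertices. This is resolved either by extending the order so that $v_R$ occurs after all interaction vertices (Theorem~\ref{th:spanning}'s conclusion then still forces $c_{i,e_\uparrow}=0$ on free frequencies at $v$), or by a direct check at $v_R$: the edge $e_{n,1}$ is free in the left subtree while $e_{n,1}'$ is the integrated edge added last in the right subtree, and Kirchhoff at $v_R$ with $k_R = 0$ gives $k_{e_{n,1}'} = -k_{e_{n,1}}$, a single free frequency distinct from any frequency of an edge at $v_n$ or $v_n'$. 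Either way, $k_\uparrow$ is independent of $k_f$ (and of $k_{f'}$ in the degree~2 case), and the argument above goes through.
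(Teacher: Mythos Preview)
Your proof is correct and follows essentially the same route as the paper's: both combine Kirchhoff's law at $v$ with the time-ordering and $\{-1,0,1\}$-coefficient clauses of Theorem~\ref{th:spanning}, reducing the degree~1 case to the observation that $c_1+c_2=-1$ has only $(-1,0)$ and $(0,-1)$ as solutions in $\{-1,0,1\}$. Your write-up is simply more explicit, in particular about the preliminary step that $k_\uparrow$ is independent of the free edges below $v$, and about the boundary case $v\in\{v_n,v_n'\}$ where $e_\uparrow$ reaches $v_R$; the paper's proof absorbs these points into the phrase ``$k_f+k_e+k_{e'}$ can only depend on later frequencies'' without further comment.
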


\begin{proof}
For a degree one vertex, recall that $k_f + k_e + k_{e'}$ can only depend on later frequencies (for the natural order); furthermore, both $k_e$ and $k_{e'}$ can be written as one of $G+k_f$, $G-k_f$, or $G$, where $G$ only depends only on later frequencies. The desired conclusion follows immediately.

For a degree two vertex, it suffices to observe that $k_f + k_{f'} + k_{e}$ only depends on  later frequencies.
\end{proof}

\begin{lemma} \label{lem:comptage}
Denoting by $n_i$ the number of interaction vertices of degree $i$ for $i=0,1,2$, one has the following relations:
\be \lab{sumni}
n_0+n_1+n_2=2n
\ee
\be \lab{sumni4}
n_1+2n_2=2n
\ee
\end{lemma}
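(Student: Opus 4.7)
The plan is to establish both identities by an elementary double-counting argument on the interaction graph.

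Equation \fref{sumni} is immediate: the total number of interaction vertices is $2n$, namely the $n$ vertices $v_1,\dots,v_n$ of the left subtree and the $n$ vertices $v_1',\dots,v_n'$ of the right subtree; and as recalled just above the lemma each such vertex has degree $0$, $1$, or $2$ (since the right edge below every interaction vertex is always added to the spanning tree, so its degree cannot equal $3$).

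For \fref{sumni4} I would double-count the set
$$
S = \{(v,f) \, : \, v \text{ an interaction vertex}, \; f \text{ a free interaction edge below } v\}.
$$
Summing over $v$ gives $|S| = \sum_v \deg(v) = n_1 + 2 n_2$. Summing over $f$: each free interaction edge has a unique top endpoint, which is either an interaction vertex (contributing $1$ to $|S|$) or the root $v_R$ (contributing $0$), so $|S| = (2n+1) - \#\{ f \text{ free with top endpoint } v_R\}$, where $2n+1$ comes from Theorem~\ref{th:spanning}.

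The one nontrivial point is that exactly one free edge has top endpoint $v_R$. Only the two top interaction edges $e_{n,1}$ and $e_{n,1}'$ reach $v_R$; all other interaction edges have interaction-vertex tops. The algorithm describing the spanning tree explicitly adds $e_{n,1}'$ in the step \emph{``we add the edge between $v_n'$ and $v_R$''}, so $e_{n,1}'$ is not free, while $e_{n,1}$ never appears as a candidate in the algorithm (it is not an upper pairing edge, not below any interaction vertex, and the final explicit additions concern only $e_{n,1}'$ and the root edge $e_R$); consequently $e_{n,1}$ is free, and Theorem~\ref{th:spanning} guarantees that this is consistent with obtaining a spanning tree. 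As a sanity check, denoting by $m$ the number of middle/left edges kept in the spanning tree, the count $(6n+2) - (2n+1+m) = 2n+1$ of interaction edges not in the tree forces $m = 2n$, so among the $4n$ middle/left candidates exactly $2n$ are free, matching the single additional free top edge $e_{n,1}$ to total $2n+1$. Combining both expressions for $|S|$ yields $n_1 + 2n_2 = 2n$, which is \fref{sumni4}.
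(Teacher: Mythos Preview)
Your proof is correct and follows essentially the same approach as the paper's own proof: both count the $2n$ interaction vertices for \fref{sumni}, and for \fref{sumni4} both compare the total number $2n+1$ of free interaction edges with the number attached below interaction vertices, using that exactly one of the two edges meeting $v_R$ is free. Your version is simply more explicit, phrasing the second count as a formal double count of the set $S$ and justifying in detail (via the spanning-tree algorithm) why $e_{n,1}'$ is integrated and $e_{n,1}$ is free, a fact the paper merely asserts.
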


\begin{proof}

The first relation comes from the fact that there are $2n$ interaction vertices, $n$ in each subgraph below the root vertex. For the second, recall that there are $2n+1$ free interaction edges (excluding the pairing edges). Moreover, there is always one edge below the root vertex that is free, and one that is not. Hence, there are $2n+1-1=2n$ free variables below the interaction edges, which on the other hand equals $n_1+2n_2$ by definition of the degree. This proves \fref{sumni4}.

\end{proof}

\begin{lemma} \label{lem:degreezerofirst}
In the construction of the spanning tree in the proof of Theorem \ref{th:spanning}, the first interaction vertex $v_1$ that is considered, in the graph on the right, is always of degree zero or one. Moreover, if it is of degree one, then the graph has a degeneracy at $v_1$ of type $(1,\{j,k \})$ for some $\{j,k\}\subset \{0,1,2\}$ and there holds:
\be \label{degeneracyleftvertex}
|\Delta_{\ell,\ell',P}(\underline{k},\underline{k}',\underline{k_{-1}},k_R)|\lesssim \delta \left(k_{0,\ell_1+1}+ k_{0,\ell_1+1+\sigma_{1,\ell_1}}\right)\delta \left(k_{0,\ell_1}+k_{0,\ell_1+2}\right).
\ee
\end{lemma}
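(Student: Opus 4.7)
The plan is to argue directly from the iterative spanning-tree construction in the proof of Theorem \ref{th:spanning}. At the moment when the algorithm reaches the first interaction vertex $v_1$ (at the bottom of the right graph), the only edges already in the spanning tree are the upper pairing edges $(e_{-1,i})_{1\leq i\leq 4n+2}$. Consequently, in the partial spanning tree built so far, each initial vertex $v_{0,i}$ is connected only to its partner $v_{0,j}$ via the pairing vertex $v_{\{i,j\}}$, and to nothing else. This is the configuration against which I analyze whether the middle and left edges below $v_1$ get added to the tree or declared free.

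The algorithm unconditionally adds the right edge $e_{0,\ell_1+2}$, connecting $v_1$ to $v_{0,\ell_1+2}$ and thus to its pair. It next tries the middle edge $e_{0,\ell_1+1}$, rejecting it (declaring it free) if and only if it would form a loop, i.e.\ iff $\ell_1+1$ is paired with $\ell_1+2$ by $P$. It then tries the left edge $e_{0,\ell_1}$, rejecting it iff $\ell_1$ is paired with some vertex already connected to $v_1$. To rule out degree $2$, I would argue by contradiction: suppose both the middle and the left edges are free. Middle free forces $\{\ell_1+1,\ell_1+2\}$ to be a pair of $P$, after which $v_1$ is connected in the partial tree only to $\{v_{0,\ell_1+1},v_{0,\ell_1+2}\}$ and their common pairing vertex. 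For the left edge to create a loop, $\ell_1$ would then need to be paired with $\ell_1+1$ or $\ell_1+2$; but these two indices are already paired with each other in the partition $P$, a contradiction. Hence $v_1$ has degree $0$ or $1$.

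If the degree is $1$, then either the middle edge is free, in which case $v_{0,\ell_1+1}$ and $v_{0,\ell_1+2}$ are paired, so $\operatorname{In}(e_{0,\ell_1+1})\cup\operatorname{In}(e_{0,\ell_1+2})=\{v_{0,\ell_1+1},v_{0,\ell_1+2}\}$ are all paired together, giving a degeneracy of type $(1,\{1,2\})$; or the left edge is free, which forces $\ell_1$ to be paired with $\ell_1+1$ or $\ell_1+2$, giving a degeneracy of type $(1,\{0,1\})$ or $(1,\{0,2\})$. In each instance the sign condition $\sigma_{0,a}\sigma_{0,b}=-1$ required in the definition of degeneracy is automatic, since $P\in\mathcal P(\ell,\ell')$ is an admissible pairing. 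The bound \eqref{degeneracyleftvertex} then follows from a direct application of Lemma \ref{lem:degeneracy} with $i=1$.

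The only genuinely delicate step is the exclusion of degree $2$, which relies entirely on the fact that a pairing is a partition of the indices into disjoint pairs; the remaining analysis is bookkeeping on which initial vertex is connected to $v_1$ after each step of the spanning-tree construction.
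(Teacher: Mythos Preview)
Your proof is correct and follows essentially the same approach as the paper's: both analyze which of the middle and left edges below the first interaction vertex get added to the spanning tree based on the pairing of the three initial vertices beneath it, use the disjointness of pairs in $P$ to rule out degree $2$, identify the degeneracy type in each degree-$1$ subcase, and conclude via Lemma~\ref{lem:degeneracy}. The only cosmetic difference is that the paper organizes the case split according to whether the middle edge is free, whereas you phrase the exclusion of degree $2$ as a short contradiction; the content is identical.
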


\begin{proof}
Let us for simplicity call $v$ the first interaction vertex in the right graph, $e_1=e_{0,\ell_1}$, $e_2=e_{0,\ell_1+1}$ and $e_3=e_{0,\ell_1+2}$ the three edges below it (left, center, right), with initial vertices $v_{0,\ell_1}$, $v_{0,\ell_1+1}$ and $v_{0,\ell_1+2}$ respectively. We recall that in the construction of the spanning tree in Theorem \ref{th:spanning}, first all upper pairing edges are added, giving a graph $\mathcal G^0$. Then, the edge $e_3$ is added to $\mathcal G^0$, and we denote this graph by $\mathcal G^1$. 

First, if $v_{0,\ell_1+1}$ is not paired with $v_{0,\ell_1+2}$, then $e_2$ is an integrated edge that is added to $\mathcal G^1$. Next, if $v_{0,\ell_1}$ is not paired with $v_{0,\ell_1+1}$ or $v_{0,\ell_1+2}$, then $e_1$ is also added to the spanning tree and $v$ is of degree zero. If $v_{0,\ell_1}$ is paired with $v_{0,\ell_1+j}$ for $j\in \{1,2\}$, then $e_1$ is not added, $v$ is of degree one and has a degeneracy of type $(1,\{0,j\})$.

Second, if $v_{0,\ell_1+1}$ is paired with $v_{0,\ell_1+2}$, then $e_2$ is kept free and is not added to $\mathcal G^1$. Next, $v_{0,\ell_1}$ cannot be paired with either $v_{0,\ell_1+1}$ or $v_{0,\ell_1+2}$, so $e_1$ is added to the spanning tree. $v$ is of degree one and has a degeneracy of type $(1,\{1,2\})$.

In either cases, \fref{degeneracyleftvertex} holds true from Lemma \ref{lem:degeneracy}.
\end{proof}

\label{sectionchainladder}

\section{Bounds on the approximate solution}

\label{sectionapproximate}

\subsection{Main result}

\begin{proposition} \label{pr:LpLpinter}
For any $p\in \mathbb N$, under the hypothesis~\eqref{epsilont},
\be \label{bd:fourierwnLp}
\mathbb E \| u^n(t) \|_{L^{2p}}^{2p} \lesssim_{n,p,\kappa} 
\epsilon^{-\kappa} \left\{ \begin{array}{l l l} (\lambda^2 t)^{2pn} & \mbox{for } t\leq \epsilon,\\ \displaystyle \left( \frac{1}{T_{kin}}\right)^{pn} t(t^{\frac 12}\epsilon^{-1})^{p-1} & \mbox{for }  \epsilon \leq t \lesssim 1. \end{array} \right.
\ee
Therefore,
\be \label{bd:fourierwnLpLp}
 \mathbb E \| u^n \|_{L^{2p}_TL^{2p}}^{2p} \lesssim_{n,p,\kappa} 
  \epsilon^{-\kappa} \left\{ \begin{array}{l l l} T(\lambda^2 T)^{2pn} & \mbox{for } T \leq \epsilon ,\\  \displaystyle \left( \frac{1}{T_{kin}}\right)^{pn}(\ep+T^2)  (T^{\frac 12}\epsilon^{-1})^{p-1}& \mbox{for }  \epsilon \leq T \lesssim 1.\end{array} \right.
\ee
Furthermore,
 if $\kappa>0$, $s\in \mathbb{N}$, there exists $b>\frac{1}{2}$ such that
\be \label{bd:fourierwnXsb}
\mathbb E \left\| \chi \left(\frac t T\right)u^n \right\|_{X^{s,b}_\epsilon}^2 \lesssim_{n,s,b,\kappa} \epsilon^{-\kappa}  \displaystyle \left( \frac{1}{T_{kin}} \right)^{n} \quad \quad \mbox{for }  \epsilon \lesssim T \lesssim 1.
\ee
Finally,
$$
\mathbb{E} \left\| \chi(t)\int_0^t e^{i(t-s)\Delta}\chi(s) E^N \,ds \right\|_{X^{s,b}_\epsilon} \lesssim \epsilon^{- \frac{1}{4} - \frac{d}{4}-\kappa}  \left( \frac{1}{T_{kin}} \right)^{\frac N4}.
$$
\end{proposition}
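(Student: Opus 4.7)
My plan would reduce to the second moment via Cauchy--Schwarz, use the standard Bourgain-space Duhamel bound to trade the Duhamel integral for an $L^2_TL^2_x$ estimate on $E^N$, and then apply the moment bounds for the individual iterates already established in the earlier parts of the proposition. By Cauchy--Schwarz,
$$\mathbb E\Big\|\chi(t)\int_0^t e^{i(t-s)\Delta}\chi(s) E^N\,ds\Big\|_{X^{s,b}_\epsilon}\;\leq\;\Big(\mathbb E\Big\|\chi(t)\int_0^t e^{i(t-s)\Delta}\chi(s) E^N\,ds\Big\|^2_{X^{s,b}_\epsilon}\Big)^{1/2},$$
and by the Bourgain Duhamel inequality (Appendix~\ref{Xsbbasics}) for $b>1/2$ one has $\|\chi(t)\int_0^t e^{i(t-s)\Delta}F\,ds\|_{X^{s,b}_\epsilon}\lesssim \|F\|_{X^{s,b-1}_\epsilon}\lesssim \|F\|_{L^2_TL^2_x}$, where the second step uses $b-1\leq 0$ together with the embedding $L^2_tH^s_\epsilon\hookrightarrow X^{s,b-1}_\epsilon$ and the fact that every iterate is spectrally supported on $|k|\lesssim \epsilon^{-1}$, so $H^s_\epsilon\sim L^2$ on such functions up to a subpolynomial factor absorbed in $\epsilon^{-\kappa}$. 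The task becomes bounding $\mathbb E\|\chi E^N\|^2_{L^2_TL^2_x}$ by $\epsilon^{-1/2-d/2-\kappa}(1/T_{kin})^{N/2}$ (in fact the argument below gives much more).

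For each cubic term $\lambda^2 u^i\bar u^j u^k$ in $E^N$ with $i+j+k\geq N$ and $i,j,k\leq N$, Hölder in space and then in time gives
$$\|u^i\bar u^j u^k\|^2_{L^2_TL^2}\;\lesssim\;\|u^i\|^2_{L^6_TL^6}\|u^j\|^2_{L^6_TL^6}\|u^k\|^2_{L^6_TL^6}.$$
Hölder on the probability space combined with hypercontractivity for Gaussian polynomials of fixed degree (here $u^m$ has degree $2m+1$ in the $G(k)$'s, so all its $L^p(\Omega)$ moments are equivalent up to constants) yields
$$\mathbb E\|u^i\bar u^j u^k\|^2_{L^2_TL^2}\;\lesssim\;\prod_{m\in\{i,j,k\}}\bigl(\mathbb E\|u^m\|^{6}_{L^6_TL^6}\bigr)^{1/3}.$$
Inserting \fref{bd:fourierwnLpLp} with $p=3$ for each factor and summing over $i,j,k\leq N$ with $i+j+k\geq N$ (a geometric sum dominated by its smallest index since $1/T_{kin}\ll 1$), the cubic contributions are bounded by $\epsilon^{-4-\kappa}(1/T_{kin})^{N+1}$ after using $\lambda^4=\epsilon^{-2}/T_{kin}$ and $T\lesssim 1$. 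The Wick terms $\lambda^2V^{i,j}u^k$ are handled identically: the pointwise bound $|V^{i,j}(t)|\leq \|u^i(t)\|_{L^2}\|u^j(t)\|_{L^2}$ together with the inclusion $L^2\subset L^6$ on the finite torus reduces them to the same product of $L^6_TL^6$ moments. Extracting the square root gives a bound of order $\epsilon^{-2-\kappa}(1/T_{kin})^{(N+1)/2}$, which is comfortably stronger than the stated estimate because $1/T_{kin}=\epsilon^{2-4\gamma}$ is a positive power of $\epsilon$ and $N$ may be chosen arbitrarily large.

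The main obstacle is essentially bookkeeping: the implicit constants in \fref{bd:fourierwnLpLp} depend on both $N$ and $p$, so one must fix $N$ and $p=3$ before sending $\epsilon\to 0$, consistent with the $N$-dependence of the implicit constant stated in the proposition; one should also verify that the Bourgain Duhamel inequality accommodates the two cutoffs $\chi(t)$, $\chi(s)$ uniformly, which is standard. No fresh Feynman-diagram analysis is needed here, in contrast to the proof of the earlier parts of the proposition, because the desired estimate is by design very loose: its only role in Section~\ref{sectionproofmaintheorem} is to render the error term negligible once $N$ is taken large enough.
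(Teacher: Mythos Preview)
Your argument for the error bound is sound and in fact more elementary than the paper's. The paper does not embed $X^{s,b-1}_\epsilon$ directly into $L^2_tL^2_x$; instead it passes through the dual Strichartz inequality~\eqref{mesangebleue2} to control $\|\chi E^N\|_{X^{s,b'}_\epsilon}$ with $b'<-\tfrac12$ by $\epsilon^{1/2-d/4-\kappa}\|E^N\|_{L^{4/3}L^{4/3}}$, then uses H\"older with three $L^4_TL^4$ factors, and finally interpolates between this $X^{s,b'}$ estimate and a crude $X^{s,0}$ bound to reach $X^{s,b-1}$. Your route---direct use of $\langle\tau+|k|^2\rangle^{b-1}\le 1$ and H\"older with three $L^6_TL^6$ factors---avoids both the dual Strichartz step and the interpolation, at the price of a slightly worse $\epsilon$-prefactor. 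Since both arguments feed \eqref{bd:fourierwnLpLp} (with $p=2$ in the paper, $p=3$ for you), neither needs any new Feynman-diagram analysis, exactly as you observe.

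Two small points. First, on a torus of finite volume the inclusion goes the other way: you mean $\|f\|_{L^2}\lesssim\|f\|_{L^6}$ (i.e.\ $L^6\hookrightarrow L^2$), which is indeed what the argument for the Wick terms requires. Second, your final claim that $\epsilon^{-2-\kappa}(1/T_{kin})^{(N+1)/2}$ is ``comfortably stronger'' than $\epsilon^{-1/4-d/4-\kappa}(1/T_{kin})^{N/4}$ is not literally correct for every fixed $N$: when $d$ is small and $\gamma$ is close to $\tfrac12$, your $\epsilon^{-2}$ prefactor can exceed $\epsilon^{-1/4-d/4}$, and the extra $(1/T_{kin})^{N/4+1/2}$ only compensates once $N\gtrsim (7-d)/(2-4\gamma)$. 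This has no bearing on the application in Section~\ref{sectionproofmaintheorem}, where $N$ is taken large after $\gamma$ is fixed, but strictly speaking your bound does not recover the proposition's displayed estimate uniformly in $N$. (The paper itself is a little inconsistent here---the proof in Section~5.5 yields $(1/T_{kin})^{N/2}$, which is what Corollary~\ref{aigleroyal} uses, rather than the $(1/T_{kin})^{N/4}$ in the proposition's statement.) The hypercontractivity remark is harmless but unnecessary: plain H\"older in $\omega$ with exponents $(3,3,3)$ already gives the displayed inequality.
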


\subsection{Proof of the $L^2$ bound}

\label{subsecproofl2}

 \underline{The trivial bound on the time integral.} For $t\leq \epsilon$, we use the identity \fref{id:formulamathcalFGPraw} and use the rough estimates $|e^{-i\Omega_k \sum_{j=0}^{k-1}s_j}|\leq 1$:
\begin{align*}
& |\mathcal F(G,P)| \\
& \quad \lesssim \lambda^{4n}\ep^{d(2n+1)} \sum_{\underline{k},\underline{k}'}  \int_{\mathbb R_+^{n+1}\times \mathbb R_+^{n+1}} \mathbbm{1}(|\underline{k}|,|\underline{k}'| \lesssim \epsilon^{-1}) \Delta_{\ell,\ell',P}(\underline{k},\underline{k}',0)\delta\left(t-\sum_{i=0}^{n}s_i\right)\delta\left(t-\sum_{i=0}^{n}s_i'\right) \, d\underline{s} \, d \underline{s}' 
\end{align*}
After resolution of the momenta constraints, Theorem \ref{th:spanning}, we compute the above integral by integrating over the free variables $(k^f_i)_{1\leq i \leq 2n+1}$ given by this Theorem, giving a factor $\epsilon^{-d(2n+1)}$.Then we integrate over the temporal variables, giving a $t^{2n}$ factor, so that:
$$
|\mathcal F(G,S,P)|\lesssim \lambda^{4n}\ep^{d(2n+1)} \epsilon^{-d(2n+1)}t^{2n}=\lambda^{4n}t^{2n}.
$$

\bigskip

\noindent \underline{Splitting the resolvent integral}. We use the identity \fref{pivert} for the oscillatory factors, instead of \fref{id:formulamathcalFGPraw} as in Step 0. We resolve the momenta constraints using Theorem \ref{th:spanning}, so that the sum over all frequencies reduces to the sum over free frequencies $(k_f^i)_{1\leq i \leq 2n+1}$. This gives
\begin{align}
\nonumber \left| \mathcal F(G,P)\right|&  \lesssim \lambda^{4n}\ep^{d(2n+1)} \sum_{\underline{k_f}} \mathbbm{1}(|\underline{k}|,|\underline{k}'| \lesssim \epsilon^{-1})  \Delta_{\ell,\ell',P} (\underline{k},\underline{k'},0) \\
\nonumber & \qquad \int_{\mathbb R^2} \prod_{k=1}^{n}\frac{1}{\left|\alpha -\sum_{j=k}^{n}\Omega_j+\frac{i}{t}\right|}\prod_{k=1}^{n}\frac{1}{\left|\alpha' -\sum_{j=k}^{n}\Omega_j'+\frac{i}{t}\right|} \frac{d\alpha}{|\alpha+\frac i t|}\frac{d\alpha'}{|\alpha'+\frac i t|}\\
\label{id:integralsplit} & = \underbrace{\lambda^{4n}\ep^{d(2n+1)} \int_{|(\alpha,\alpha')|\leq \epsilon^{-K}}[...]}_{\displaystyle \mathcal F_1(G,P)} + \underbrace{\lambda^{4n}\ep^{d(2n+1)} \int_{|(\alpha,\alpha')|> \ep^{-K}}[...]}_{\displaystyle \mathcal F_2(G,P)},
\end{align}
where $K \gg 1$ remains to be fixed.

\bigskip

\noindent \underline{The bound for $(\alpha,\alpha')$ small: $\mathcal F_1(G,P)$.} In order to bound this term, we integrate over the free variables in the following order: we consider each interaction vertex iteratively according to the natural time ordering of the graph, and then integrate over the free variables below it; see below for the details of this operation. This results in an integration over the variables $(k^i_f)_{1\leq i \leq 2n+1}$. Finally, we integrate over the $\alpha$ and $\alpha'$ variables which contribute a subpolynomial factor.

When integrating at each edge we obtain the following bounds. Below we treat the case for which the edge belongs to the left graph for simplicity.
\begin{itemize}
\item If $v_k$ is of degree $0$, then we use the rough bound $\displaystyle
\left| \frac{1}{\alpha -\sum_{j=k}^{n}\Omega_j+\frac{i}{t}} \right|\leq t.$
\item If $v_k$ is of degree $1$, we write:
\begin{align*}
\alpha -\sum_{j=k}^{n}\Omega_j & = -\Omega_k+\alpha-\sum_{j=k+1}^{n}\Omega_j\\
&=|k_{k-1,\ell_k}|^2-|k_{k-1,\ell_k+2}|^2-\sigma_{k,\ell_k}\left( |k_{k-1,\ell_k+1} |^2- |k_{k,\ell_k}|^2\right)+\alpha-\sum_{j=k+1}^{n}\Omega_j.
\end{align*}
There exists one free variable among $k_{k-1,\ell_k}$ and $k_{k-1,\ell_k+1}$, that we denote by $k^f_{i_k}$ (i.e. this is the $i_k$-th free variable given by Theorem \ref{th:spanning}). From the time ordering property of Theorem \ref{th:spanning}, one notices that the quantities $|k_{k,\ell_k}|^2$ and $\alpha-\sum_{j=k+1}^{n}\Omega_j$ are independent of $k^f_{i_k}$. By Lemma \ref{lem:Lpinter} and Lemma~\ref{macareux2}, since $\ep \lesssim t$,
$$
\left| \sum_{k_{i_k}^f}\mathbf{1}(|\underline{k}| \lesssim \epsilon^{-1})\Delta_{\ell,\ell',P} (\underline{k},\underline{k'},0)   \frac{1}{\alpha -\sum_{j=k}^{n}\Omega_j+\frac{i}{t}} \right|\lesssim \ep^{1-d-\kappa} .
$$

\item If $v_k$ is of degree $2$ we perform a similar analysis. We integrate over its associated free variables (which we denote here $k^f_{i_k}$ and $k^f_{i_k'}$) and estimate by Lemma \ref{lemmavertextwo}:
$$
\left| \sum_{k^f_{i_k},k^f_{i_k'}} \mathbf{1}(|\underline{k}| \lesssim \epsilon^{-1}) \Delta_{\ell,\ell',P} (\underline{k},\underline{k'},0) \frac{1}{\alpha -\sum_{j=k}^{n}\Omega_j+\frac{i}{t}} \right|\lesssim \ep^{2-2d-\kappa}.
$$
\end{itemize}
Recall the notation $n_i$ from Lemma \ref{lem:comptage}. Once the above integration procedure is completed, we integrate over the last free variable ending at the root vertex using solely that it is restricted to a ball of radius $\lesssim \epsilon^{-1}$, yielding a factor $\epsilon^{-d}$. We thus obtain the following bound for the first part of the integral in \fref{id:integralsplit}:
\begin{align*}
\left| \mathcal F_1(G,P)\right| & \lesssim_{n,\kappa} \lambda^{4n} \epsilon^{-\kappa} \ep^{d(2n+1)} t^{n_0}\left(\ep^{1-d}\right)^{n_1}\left(\ep^{2-2d}\right)^{n_2}\epsilon^{-d} \int_{|\alpha,\alpha'|\leq \ep^{-K}}\frac{d\alpha}{|\alpha+\frac i t|}\frac{d\alpha'}{|\alpha'+\frac it|}  \\
 & \lesssim \epsilon^{-\kappa} \lambda^{4n}\ep^{2nd}t^{n_0}\ep^{(1-d)n_1} \ep^{(2-2d)n_2}.
\end{align*}
Using that $n_1 + 2n_2 = 2n$ and $n_0+\frac{n_1}{2} = n$, as follows from \eqref{sumni} and \eqref{sumni4}, this is
\begin{align}
\dots & = \epsilon^{-\kappa} \lambda^{4n}\ep^{2n}t^{n -\frac{n_1}{2}} \lesssim  \epsilon^{-\kappa} \label{bd:mathcalF1L2} \lambda^{4n}\epsilon^{2n}t,
\end{align}
where, in order to obtain the last inequality, we used that $t \leq 1$, and that the worst case is $n_1=2n-2$. If $n_1=2n$ then the first interaction vertex is degenerate, which produces an additional $\ep^{d-1}\leq t$ factor thanks to \fref{bd:degree1estimate} and the bound is actually better. The case $n_1=2n-1$ is ruled out by \fref{sumni} and \fref{sumni4}.

\bigskip

\noindent \underline{The bound for $(\alpha,\alpha')$ large: $\mathcal F_2(G,P)$.} Let us for simplicity only consider the case where $|\alpha'| > \ep^{-K}$, $|\alpha| <\ep^{-K}$ (by symmetry, the only other case to consider is $|\alpha|, |\alpha'| >\ep^{-K} $, which is simpler). Noticing that $|\Omega_k|\lesssim \ep^{-2}$ for the interactions we consider, there holds for $|\alpha'|>\ep^{-K}$, if $K>2$,
$$
\frac{1}{|\alpha' -\sum_{j=k}^{2n}\Omega_j'+\frac{i}{t}|}\lesssim \frac{1}{|\alpha'|}.
$$
Therefore, the integral over $\alpha'$ can be bounded by $\int_{\ep^{-K}} \frac{d\alpha}{|\alpha|^{n+1}}$, which, by taking $K$ sufficiently big, can gain an arbitrarily large power of $\epsilon$; this makes the estimate trivial.

\subsection{Proof of the $L^p$ bound}

We use the relation:
$$
\| u^n \|_{L^{2p}}^{2p}= \mathcal F \left((u^n)^p (\overline{ u^n})^p\right)(0)
$$
and follow the same proof as for $p=1$. What changes is that the identity corresponding to \fref{LpLpexpression} is now:
$$
\mathbb E \| w^n \|_{L^{2p}}^{2p} =\sum_{G,P} \mathcal F(G,P)
$$
with the following analogue of \fref{pivert}, where the notation $'$ to distinguish between the left and right graph is now replaced by the superscript $m\in \{1,...,2p\}$ to distinguish between the $2p$ different graphs,
\begin{align*}
\mathcal F(G,P)= & \frac{(-1)^{\sum_{m=1}^{2p}p^m} e^{2p} \lambda^{4pn}\ep^{dp(2n+1)}}{(2\pi)^{2p(nd+1)}} \sum_{\underline{k}}\prod_{\{i,j\}\in P} |A(\epsilon k_{0,i})|^2  \Delta_{\underline{\ell},P}(\underline{k},0)  \\
&  \qquad  \int_{\mathbb R^{2p}} e^{-i\sum_{m=1}^{2p}\alpha^m t}\prod_{m=1}^{2p} \prod_{k=1}^{n}\frac{1}{\alpha^m -\sum_{j=k}^{2n}\Omega_j^m+\frac{i}{t}} \prod_{m=1}^{2p} e^{-it\sigma^m_{n,1}|k^m_{n,1}|^2} \prod_{m=1}^{2p} \frac{1}{\alpha^{m} + \frac{i}{t}}  \,  d\underline{\alpha},
\end{align*}
with an an obvious definition for $\Delta_{\underline{\ell},P}(\underline{k},k_R) $ etc... This can be represented by the following graph:
\begin{center}
\includegraphics[width=15cm]{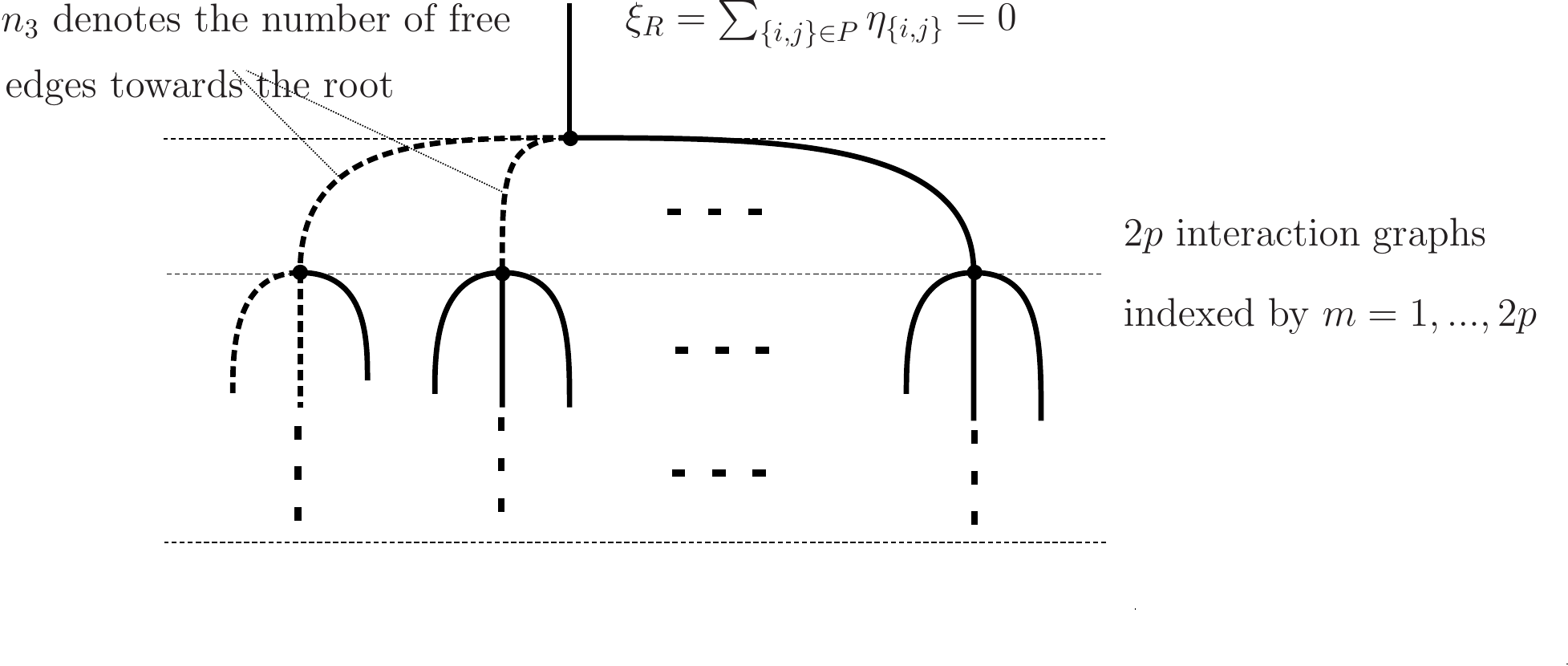}
\end{center}

We introduce the following new notation: $n_3\in \{1,...,2p-1\}$ denotes the number of free edges joining the last interaction vertices to the root. Lemma \ref{lem:comptage} naturally adapts and we get the following relations:
\be \lab{sumLpLp}
n_0+n_1+n_2=2pn, \quad \mbox{and} \quad n_1+2n_2+n_3=p(2n+1), \quad 1\leq n_3\leq 2p-1.
\ee
The first equality corresponds to the classification of $2pn$ interaction according to their degree, the second one reflects the total number of free variables, and the last one expresses the fact that at least one of the last edges reaching the root is free, and that at least one is not. We perform the very same strategy as in the proof of the bound for $\mathcal F(G,P)$ when $p=1$. There is one exception: after integrating all free variables below interaction edges, we integrate over the remaining last $n_3$ free variables and estimate using the trivial support estimate $|k^m_{n,1}|\lesssim \e^{-1}$. This produces:
\begin{align*}
 \left| \mathcal F(G,P)\right|  & \lesssim \epsilon^{-\kappa} \lambda^{4pn}\ep^{dp(2n+1)}t^{n_0}\ep^{(1-d)n_1}\ep^{(2-2d) n_2}\epsilon^{-dn_3}.
\end{align*}
Since $2n_2 = p(2n+1) - n_1 -n_3$ and $n_0+\frac{n_1}{2}=pn+\frac{n_3-p}{2}$, this is
\begin{align*}
\dots &=\epsilon^{-\kappa} \lambda^{4pn}\ep^{2pn}t^{pn} t^{-\frac{n_1}{2}}(t^{\frac 12}\ep^{-1})^{n_3-p}\\
&\lesssim \left\{\begin{array}{l l} \epsilon^{-\kappa} \lambda^{4pn}\epsilon^{2pn}t(t^{\frac 12}\ep^{-1})^{p-1} \quad \mbox{if }n_1\leq 2pn-2, \\
 \epsilon^{-\kappa} \lambda^{4pn}\epsilon^{2pn}t^{\frac 12}(t^{\frac 12}\ep^{-1})^{2-p}  \quad \mbox{if }n_1= 2pn-1, \\
 \ep^{d-1}\lambda^{4pn}\ep^{2pn}(t^{\frac 12}\ep^{-1})^{1-p} \quad \mbox{if } n_1=2pn,\\
 \end{array} \right.\\
 &\lesssim  \epsilon^{-\kappa} \lambda^{4pn}\epsilon^{2pn}t(t^{\frac 12}\ep^{-1})^{p-1}
\end{align*}
Above, we used that, for $n_1\leq 2pn-2$, if $t\leq 1$ then $t^{-\frac{n_1}{2}}\leq t^{1-pn}$, and then that the worst possible contribution of $(t^{\frac 12}\ep^{-1})^{n_3-p}$ occurs for $n_3=2p-1$ as $t\geq \epsilon$. The case $n_1=2pn-1$ forces $n_3=2$ by \fref{sumLpLp} and we then use $\ep\lesssim t$ and $p\geq 2$. If $n_1=2pn$ then the first interaction vertex is degenerate, which produces an additional $\ep^{d-1}\leq t$ factor thanks to \fref{bd:degree1estimate}.

\subsection{Proof of the $X^{s,b}$ bound} \label{proofxsb}
The proof follows the same strategy as that of the $L^2$ norm, we will simply highlight what are the necessary modifications. From \fref{colvert} and the resolvent identity Lemma \ref{lem:resolventemoduli} with $\eta=1/T$, one obtains the following expression for the spacetime Fourier transform of $u^n$ (where the $e^{\frac tT}$ factor has been absorbed in the cut-off $\chi(t/T)$ in the right hand side to simplify notations):
\begin{align*}
& \mathcal F\left(\chi\left(\frac t T\right)u^n \right)(\tau,k_R) =  \frac{i^n (-1)^p (-i\lambda^2)^n}{(2\pi)^{dn+1}} \sum_{\ell} \sum_{\underline{k}} \prod_{i}\widehat u_0(k_{0,i},\sigma_{0,i})  \\
& \qquad\quad \qquad  \int_{\mathbb{R}} T\widehat \chi(T(\tau-\tau_1))  \frac{1}{-\tau_1- |k_R|^2+\frac iT}\prod_{k=1}^n \frac{1}{-\tau_1- |k_R|^2 -\sum_{j=k}^{n}\Omega_j+\frac iT} \Delta_{\ell}( \underline{k},k_R)\, d\tau_1.
\end{align*}
The identity corresponding to \fref{LpLpexpression} is now
$$
\mathbb E \left\| \chi\left(\frac tT\right)u^n \right\|_{X^{s,b}}^{2} =\sum_{G,P} \mathcal F(G,P)
$$
with the following analogue of \fref{pivert}:
\begin{align*}
& \mathcal F(G,P)= \frac{T^2 \lambda^{4n}\ep^{d(2n+1)}}{(2\pi)^{2dn+2}} \sum_{\underline{k},\underline{k'}}  \langle \epsilon k \rangle^{2s} \prod_{\{i,j\}\in P} |A(\epsilon k_{0,i})|^2  \Delta_{\ell,\ell',P}(\underline{k},\underline{k}',0) \\
&  \int_{\mathbb R \times \mathbb R\times \mathbb R} \frac{1}{-\tau_1-|k_{n,1}|^2+\frac iT}\prod_{k=1}^n \frac{1}{-\tau_1-|k_{n,1}|^2-\sum_{j=k}^{n}\Omega_j+\frac iT}\frac{1}{-\tau_2-|k_{n,1}|^2+\frac iT}\prod_{k=1}^n \frac{1}{-\tau_2-|k_{n,1}|^2-\sum_{j=k}^{n}\Omega_j'+\frac iT} \\
& \qquad \qquad \qquad \qquad \langle \tau +|k_R|^2 \rangle^{2b}  \widehat \chi(T(\tau-\tau_1))\widehat \chi(T(\tau-\tau_2))\, d\tau_1 \, d\tau_2 \, d\tau \\
& = \underbrace{T^2 \lambda^{4n}\ep^{d(2n+1)} \int_{|\tau|\leq \ep^{-K}}[...]}_{\displaystyle \mathcal F_1(G,P)} + \underbrace{T^2 \lambda^{4n}\ep^{d(2n+1)} \int_{|\tau|> \ep^{-K}}[...]}_{\displaystyle \mathcal F_2(G,P)}.
\end{align*}
The second term above, $\mathcal{F}_2$, gains arbitrarily large powers of $\epsilon$ by choosing $K$ sufficiently big, making estimates easy. As for the first term $\mathcal F_1$, one can proceed identically to what was done in Subsection \ref{subsecproofl2}, with one exception: the bound $\sum_{|k_{n,1}| \lesssim \epsilon^{-1}} \int_{|\alpha,\alpha'| \leq \epsilon^{-K}} \frac{1}{|\alpha + \frac{i}{t}|} \frac{1}{|\alpha' + \frac{i}{t}|} \,d\alpha \,d\alpha' \lesssim \epsilon^{-d-\kappa}$ should be replaced by an estimate on
\begin{align*}
& \sum_{|k_{n,1}| \lesssim \epsilon^{-1}} \iiint \frac{1}{|-\tau_1- |k_{n,1}|^2 +\frac iT|} \frac{1}{|-\tau_2- |k_{n,1}|^2 +\frac iT|}\langle \tau + |k_{n,1}|^2 \rangle^{2b}\\
&  \qquad\qquad\qquad\qquad\qquad\qquad \widehat T^2\widehat \chi (T(\tau-\tau_1)) \chi (T(\tau-\tau_2)) \mathbbm{1}(\tau < K\ep^{-2}) \, d\tau_1 \,d \tau_2 \, d\tau.
\end{align*}
Since the convolution of $\frac{1}{|-\tau + C + \frac iT|}$ with $T\widehat{\chi}(T \tau)$ is bounded by a multiple of $\frac{1}{|-\tau + C + \frac iT|}$, the above can be bounded by
$$
\sum_{|k_{n,1}| \lesssim \epsilon^{-1}} \int_{|\tau| < \ep^{-K}} \frac{1}{|-\tau- |k_{n,1}|^2 +\frac iT|^2} \langle \tau + |k_{n,1}|^2 \rangle^{2b}\,d\tau  \lesssim \epsilon^{-d} \int_{|\tau| < \ep^{-K}}  \frac{1}{|-\tau +\frac iT|^2} \langle \tau \rangle^{2b} \,d\tau \lesssim \epsilon^{-d -K(2b-1)}.
$$
Given $K$, it now suffices to choose $b > \frac{1}{2}$ such that $K(2b-1)$ is as small as desired.

\subsection{Bound on the error} First, notice that the Fourier support of the approximate solution makes the choice of $s$ irrelevant in our scaled Sobolev and Bourgain spaces. Second, we discard the second summand in the definition of $E^N$, since it is easier to estimate. Next, by~\eqref{mesangebleue0},
$$
\left\| \chi(t) \int_0^t e^{i(t-s) \Delta} \chi(s) E^N \,ds \right\|_{X^{s,b}_\epsilon} \lesssim \left\|  \chi (t)\sum_{\substack{i,j,k\leq N \\ i+j+k \geq N}} u^i u^j u^k \right\|_{X^{s,b-1}_\epsilon}.
$$
We can now proceed as usual by interpolating $X^{s,b-1}_\epsilon$ between $X^{s,b'}_\epsilon$, with $b' < - \frac{1}{2}$, and a trivial but lossy bound in $X^{s,0}$. Omitting for the latter, we focus on the former and choose $b'<-\frac{1}{2}$. Then, by~\eqref{mesangebleue2},
\begin{align*}
 \left\|  \chi \left( t  \right) \sum_{\substack{i,j,k \leq N \\ i+j+k\geq N}} u^i u^j u^k \right\|_{X^{s,b'}_\epsilon}&  \lesssim \epsilon^{\frac{1}{2} - \frac{d}{4} - \kappa} \left\| \chi (t)\sum_{\substack{i,j,k \leq N \\ i+j+k\geq N}}u^i u^j u^k \right\|_{L^{4/3} L^{4/3}} \\
& \lesssim \epsilon^{\frac{1}{2} - \frac{d}{4}-\kappa} \sum_{\substack{i,j,k \leq N \\ i+j+k\geq N}} \| u^i \|_{L^4_{2} L^4} \| u^j \|_{L^4_{2} L^4} \| u^k \|_{L^4_{2} L^4}.
\end{align*}
By H\"older's inequality and the $L^p$ estimates of Proposition \ref{pr:LpLpinter},
\begin{align*}
& \mathbb{E}  \left\|  \chi (t) \sum_{\substack{i,j,k \leq N \\ i+j+k\geq N}} u^i u^j u^k \right\|_{X^{s,b'}_\epsilon} \\
& \qquad \qquad \qquad \qquad \qquad \lesssim  \epsilon^{\frac{1}{2} - \frac{d}{4}-\kappa} \sum_{\substack{i,j,k \leq N \\ i+j+k\geq N}} \left[ \mathbb{E} \| u^i \|_{L^4_{2} L^4}^4 \right]^{1/4}   \left[ \mathbb{E}\| u^j \|_{L^4_{2} L^4} ^4 \right]^{1/4}   \left[ \mathbb{E}\| u^k \|_{L^4_{2} L^4}^4 \right]^{1/4} \\
& \qquad \qquad \qquad \qquad \qquad \lesssim  \epsilon^{- \frac{1}{4} - \frac{d}{4}-\kappa}  \left( \frac{1}{T_{kin}} \right)^{\frac N2}.
\end{align*}

\section{Control of the linearization around $u^{app}$: proof of Proposition~\ref{prop:lineaire}} \label{sectionlinear}

We consider here the operator
$$
\mathcal{L} = 2\mathfrak L +\mathfrak L'
$$
where
$$
\mathfrak{L}: f \mapsto \lambda^2\chi\left( t \right) \left( f|v^{app}|^2 -fV -\langle v^{app},f\rangle v^{app}\right)
$$
and
$$
\mathfrak{L}': f \mapsto \lambda^2\chi\left( t \right) \left((v^{app})^2 \overline{f}-2\langle f,v^{app}\rangle v^{app}\right)
$$
and aim at proving Proposition~\ref{prop:lineaire}. We only prove the corresponding bound for the operator $\mathfrak L$. Indeed, the proof for $\mathfrak L'$ is verbatim the same.

\subsection{Reduction to elementary operators} The operator $\mathfrak{L}$ will be decomposed into
$$
\mathfrak L f=\sum_{i,j=1}^N \mathfrak{L}_{i,j}f, \quad \mathfrak{L}_{i,j}f= \lambda^2\chi\left( t \right)\left(   fu^i \overline{u^j}-f\langle u^j, u^i \rangle -\langle u^j,f\rangle u^i\right).
$$
and each $\mathfrak{L}_{i,j}$ is localised in frequency by letting
$$
\mathfrak{L}_{i,j,n}=\mathfrak{L}_{i,j} Q^n_{\ep,1}.
$$
The following Lemma gives an upper bound for these operators.

\begin{lemma} \label{lem:estimationmathfrakLijn}
For any $0<\kappa\ll 1$, for $\ep$ small enough, there exists a set $E_\kappa$ of measure greater than $1-\ep^\kappa$ such that on this set, for all $n\in \mathbb Z^d$ and $(i,j)\in \{0,N\}^2$ we have the following estimates for the operator norms:
$$
\| \mathfrak{L}_{i,j,1} \|_{X^{0,\frac{1}{2}} \to X^{0,-\frac{1}{2}}} \lesssim \left(\frac{1}{T_{kin}}\right)^{\frac{i+j+1}{2}} \ep^{-\kappa}.
$$
\end{lemma}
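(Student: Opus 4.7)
The strategy is the random‐matrix type argument sketched in the introduction: bound the operator norm by a high moment of the trace of $(\mathfrak L_{i,j,n}^*\mathfrak L_{i,j,n})^M$, and then identify this trace with a sum over paired Feynman diagrams of the type constructed in Section~\ref{sectionfeynman}.

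First I would reduce the $X^{0,1/2}\to X^{0,-1/2}$ bound to an $L^2\to L^2$ statement. By the standard $X^{s,b}$ theory (Appendix~\ref{Xsbbasics}), multiplication by $\chi(t)$ and Duhamel's formula lose at most a subpolynomial factor, and, since $v^{app}$ is frequency‑localised in a ball of radius $\lesssim \epsilon^{-1}$ and its temporal frequencies are effectively controlled by a resolvent as in Section~\ref{proofxsb}, testing against wave packets shows that it is enough to bound the norm of $\mathfrak L_{i,j,n}$ acting between fixed‑time $L^2$ spaces, uniformly on the Fourier support in question. Positivity of $\mathfrak L_{i,j,n}^*\mathfrak L_{i,j,n}$ then gives, for every integer $M\geq 1$,
\[
\|\mathfrak L_{i,j,n}\|^{2M}\ =\ \|\mathfrak L_{i,j,n}^*\mathfrak L_{i,j,n}\|^{M}\ \leq\ \operatorname{Tr}(\mathfrak L_{i,j,n}^*\mathfrak L_{i,j,n})^M,
\]
so by Markov's inequality
\[
\mathbb P\bigl(\|\mathfrak L_{i,j,n}\|>\rho\bigr)\ \leq\ \rho^{-2M}\,\mathbb E\operatorname{Tr}(\mathfrak L_{i,j,n}^*\mathfrak L_{i,j,n})^M .
\]
Choosing $\rho=\epsilon^{-\kappa/2}(1/T_{kin})^{(i+j+1)/2}$ and $M$ large (depending on $\kappa$ and $N$), the required event $E_\kappa$ will be the intersection over $n\in\mathbb Z^d$ and $(i,j)$ of the complements of the bad sets; polynomial cardinality in $\epsilon^{-1}$ of the admissible $n$ and the finitely many $(i,j)$ is absorbed by the high power of $\epsilon$ gained from the trace bound.

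The core analytic work is therefore to show
\[
\mathbb E\operatorname{Tr}(\mathfrak L_{i,j,n}^*\mathfrak L_{i,j,n})^M\ \lesssim_{M,\kappa}\ \epsilon^{-\kappa}\Bigl(\frac{1}{T_{kin}}\Bigr)^{M(i+j+1)}.
\]
Each occurrence of $\mathfrak L_{i,j,n}$ contributes a factor of the form $\lambda^2 u^i\overline{u^j}\cdot $ (Wick renormalised), and each $u^i,u^j$ admits the diagrammatic expansion~\eqref{colvert}. Writing the trace as an integral against the kernel, I would represent $\mathbb E\operatorname{Tr}(\mathfrak L_{i,j,n}^*\mathfrak L_{i,j,n})^M$ as a sum over $2M$ interaction diagrams glued cyclically through the trace, together with a pairing $P$ of the $2M(i+j+1)$ initial Gaussian frequencies arising from Wick's formula~\eqref{wickformula}. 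The Wick‑renormalised structure of $\mathfrak L_{i,j,n}$ precisely excludes the degenerate pairings that would otherwise produce only a phase modulation, in the sense made precise by Lemma~\ref{lem:degeneracy} and Lemma~\ref{lem:degreezerofirst}, so that the contributing diagrams are truly off‑diagonal. The frequency cutoff $Q^n_{\epsilon,1}$ localises the external legs in a unit cube, which matches the $\ell^\infty$ loss one incurs after Cauchy–Schwarz in the trace.

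Now I would apply the machinery of Section~\ref{sectionfeynman} diagram by diagram: use Theorem~\ref{th:spanning} to select $2M(i+j+1)$ free frequencies respecting time ordering, introduce the resolvent representation of Lemma~\ref{lem:resolventemoduli} to separate the $\alpha$ variables, and then integrate the free momenta iteratively from top to bottom using Lemma~\ref{lem:Lpinter} together with the number‑theoretic estimates of Lemmas~\ref{macareux2}~and~\ref{lemmavertextwo}. Each degree‑one vertex yields $\epsilon^{1-d-\kappa}$, each degree‑two vertex $\epsilon^{2-2d-\kappa}$, and each degree‑zero vertex costs a factor $t\lesssim 1$. The identities $n_0+n_1+n_2=2M(i+j)$ and $n_1+2n_2+(\text{root leaves})=M(2(i+j)+1)$, derived exactly as in Lemma~\ref{lem:comptage} but tailored to the traced structure, combine to give overall $\bigl(\lambda^{4}\epsilon^{2}\bigr)^{M(i+j+1)}=(1/T_{kin})^{M(i+j+1)}$ after summing the at most combinatorial $M!$‑many diagrams. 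The large‑$(\alpha,\alpha')$ regions are handled exactly as the term $\mathcal F_2$ in Subsection~\ref{subsecproofl2}, at the price of an $\epsilon^{-\kappa}$ loss by taking $b-1/2$ small.

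The main obstacle is bookkeeping: the glued‑cyclic trace diagrams are not trees but have $M$ additional loops created by the $TT^*$ identifications, so the counting of degrees $n_0,n_1,n_2$ and the corresponding gain $(1/T_{kin})^{i+j+1}$ per block must be verified in the presence of those loops. The needed observation is that each extra loop introduces one additional Kirchhoff constraint, exactly compensating the extra free momentum one would naively have, so that the balance~\eqref{sumni}–\eqref{sumni4} survives block by block. Once this combinatorial check is in place, the same iterated integration used for $\mathcal F_1(G,P)$ in Subsection~\ref{subsecproofl2} closes the bound, and the probabilistic statement of the lemma follows from Markov's inequality as described above.
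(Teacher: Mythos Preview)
Your high-level strategy matches the paper's exactly: bound $\|\mathfrak L_{i,j,n}\|$ by $(\operatorname{Tr}(\mathfrak L^*\mathfrak L)^M)^{1/2M}$, expand the trace as a sum over paired Feynman diagrams, build a spanning tree, and integrate free momenta vertex by vertex using Lemmas~\ref{macareux1} and~\ref{macareux2}. Two concrete points, however, are not right as stated.

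\textbf{The reduction to ``fixed-time $L^2$'' does not work.} The $X^{0,\frac12}\to X^{0,-\frac12}$ norm is \emph{not} controlled by any fixed-time $L^2_x$ operator norm; the whole point of the Bourgain exponent is the temporal microlocalisation near the paraboloid. What the paper does instead is to absorb the weights $\langle\tau+|k|^2\rangle^{\pm1/2}$ into the kernel and view $\mathfrak L_{i,j,n}$ as an operator $\mathfrak K$ on space-time $L^2(\mathbb R_\tau\times\mathbb Z^d_k)$. The kernel of $(\mathfrak K^*\mathfrak K)^M$ then carries, along the ``spine'' frequencies $k_{3m}$, the factors $\langle\omega_{3m}\rangle^{-1}$ coming from the two $X^{s,b}$ weights meeting at each composition, together with the $\widehat\chi(\Omega_m)$ from the time cutoff. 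These weights are precisely what make the $\omega_{3m}$ integrations harmless (only a $\log$ loss) in the trace bound. If you pass to fixed-time $L^2$ you lose the $\langle\omega_{3m}\rangle^{-1}$ structure and the argument does not close. In particular the paper distinguishes, in the vertex count, the interior interaction vertices of the $u^i,u^j$ subgraphs (your $n_0,n_1,n_2$) from the $2M$ ``top'' vertices on the spine (denoted $\widetilde n_0,\widetilde n_1,\widetilde n_2$), and it is the space-time kernel that gives the right resolvent at those top vertices.

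\textbf{The trace bound has an unavoidable $\epsilon^{-d}$ loss.} After integrating all free momenta below interaction vertices, the last free variable is the output frequency $k_{6M}$, which ranges over the cube $C^n_{\epsilon,1}$ and contributes $\epsilon^{-d}$. The paper obtains
\[
\mathbb E\operatorname{Tr}(\mathfrak K^*\mathfrak K)^M\ \lesssim\ \epsilon^{-\kappa}\,\epsilon^{-d}\Bigl(\frac{1}{T_{kin}}\Bigr)^{M(i+j+1)},
\]
not the clean $(1/T_{kin})^{M(i+j+1)}$ you state. This $\epsilon^{-d}$ is the \emph{primary} reason one must take $M$ large: after the $2M$-th root it becomes $\epsilon^{-d/(2M)}\leq\epsilon^{-\kappa}$. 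It is not absorbed by a union bound over $n$ --- in fact the paper avoids any union bound over $n$, since the trace estimate is uniform in $n$ and the passage from $n=1$ to general $n$ is handled deterministically by almost locality in the proof of Proposition~\ref{prop:lineaire}.
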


With the help of the above Lemma, we are able to prove Proposition \ref{prop:lineaire}.

\begin{proof}[Proof of Proposition \ref{prop:lineaire} from Lemma~\ref{lem:estimationmathfrakLijn}]

\underline{Almost locality.} We decompose the input and output function in frequency cubes by writing
$$
\mathfrak L_{i,j} u = \sum_{n,n'\in \mathbb Z^d} Q^{n'}_{1,N}\mathfrak L_{i,j}Q^{n}_{1,N}u.
$$
Since $\mathfrak L_{i,j}$ corresponds to convolution in frequency with a kernel localized in a frequency ball of size $C\ep^{-1}$, the operator is almost local: for $|n-n'|>R$ (a fixed constant),
$$
Q^{n'}_{\ep,1}\mathfrak L_{i,j}Q^{n}_{\ep,1}u=0.
$$
As a first consequence, we claim that, for any $n$
$$
\| \mathcal{L}_{i,j,n} \|_{X^{0,\frac{1}{2}} \to X^{0,-\frac{1}{2}}} \sim \| \mathcal{L}_{i,j,1} \|_{X^{0,\frac{1}{2}} \to X^{0,-\frac{1}{2}}}.
$$
Indeed, the main part of $\mathcal{L}_{i,j}$ is simply a convolution operator in space (and time) frequency. Thus, the boundedness of $\mathcal{L}_{i,j,1}$ would imply that of $\mathcal{L}_{i,j,n}$ were it not for the weights in $X^{0,\frac{1}{2}}$ and $X^{0,-\frac{1}{2}}$. Using almost locality, these weights cancel since
$$
\sup_{k,\ell} \langle \epsilon k \rangle^{1/2}  \langle \epsilon \ell \rangle^{-1/2} \mathbf{1}_{C^n_{\epsilon,1}} (k) \mathbf{1}_{C^{n'}_{\epsilon,1}}(\ell) \sim 1 \qquad \mbox{if $n-n' \leq R$},
$$
which gives the desired result.

\bigskip

\noindent
\underline{Bound from $X^{s,\frac{1}{2}}_\epsilon$ to $X^{s,-\frac{1}{2}}_\epsilon$.} By almost locality,
$$
\| \mathfrak L_{i,j}Q^{n}_{\ep,1} \|_{X^{s,\frac{1}{2}}_\epsilon \to X^{s,-\frac{1}{2}}_\epsilon} \sim \| \mathfrak L_{i,j}Q^{n}_{\ep,1} \|_{X^{0,\frac{1}{2}}_\epsilon \to X^{0,-\frac{1}{2}}_\epsilon}
$$
and, by almost orthogonality,
\begin{align*}
\|\mathfrak L_{i,j} u\|_{X^{s,-\frac 12}_\epsilon}& \lesssim \left[ \sum_{n\in \mathbb Z^d} \| \mathfrak L_{i,j}Q^{n}_{\ep,1}u \|_{X^{s,-\frac 12}_\epsilon}^2 \right]^{1/2} \lesssim \left(\sup_{n\in \mathbb Z^d} \| \mathfrak L_{i,j,n} \|_{X^{s,\frac 12}_\epsilon \to X^{s,-\frac 12}_\epsilon}\right) \left[ \sum_{n\in \mathbb Z^d} \| Q^{n}_{\ep,1}u \|_{X^{s,\frac 12}_\epsilon}^2 \right]^{1/2}\\
&\lesssim \left(\sup_{n\in \mathbb Z^d} \| \mathfrak L_{i,j,n} \|_{X^{0,\frac 12}\to X^{0,-\frac 12}}\right) \| u \|_{X^{s,\frac{1}{2}}} \lesssim \left(\frac{1}{T_{kin}}\right)^{\frac{i+j+1}{2}} \ep^{-\kappa} \| u \|_{X^{s,\frac{1}{2}}_\epsilon}.
\end{align*}
on the set $E_\kappa$.

\bigskip
\noindent \underline{Bound from $X^{s,0}_\epsilon$ to $X^{s,0}_\epsilon$ and interpolation} Since $X^{s,0}_\epsilon$ is simply $L^2_t H^s_\epsilon$, and since $u_i$ and $u_j$ are localized in frequency in a ball of radius $C\epsilon^{-1}$, the operator norm of $\mathfrak{L}_{i,j}$ is less than $\| u^i \|_{L^\infty} \| u^j \|_{L^\infty}\lesssim \ep^{-d} \| u_i \|_{X^{s,b}_\ep}\| u_j \|_{X^{s,b}_\ep}$. From \fref{bd:fourierwnXsb} and Bienaym\'e-Tchebychev, for $\kappa$ small enough depending on $\gamma$, a set $E$ with $\mathbb P(E)\geq 1-\ep^\kappa$ exists on which $\| u_i \|_{X^{s,b}_\ep}\lesssim 1$. Hence the operator norm from $X^{s,0}_\epsilon$ to $X^{s,0}_\epsilon$ can be bounded by $\ep^{-d}$.

Interpolating between this very rough bound and the $X^{s,\frac{1}{2}} \to X^{s,-\frac{1}{2}}$ bound, we obtain a bound from $X^{s,\frac{1}{2}-\delta}$ to $X^{s,-\frac{1}{2}+\delta}$ with a loss $\epsilon^{-\kappa}$, where $\kappa$ can be made arbitrarily small by choosing $\delta$ sufficiently small. There remains to choose $b>\frac{1}{2}$ such that $b-1<-\frac{1}{2}+\delta$
\end{proof}

\subsection{The trace of the iterated operator and its diagrammatic representation}

{The key idea will be to use the inequality $\| \mathfrak T \|\lesssim \| (\mathfrak T^*\mathfrak T)^{N}\|^{1/2N}$ to relate the control of a random operator $\mathfrak T$ to that of a nonnegative self-adjoint operator, and then to use the control of the operator norm by the trace $\| (\mathfrak T^*\mathfrak T)^{N}\|\leq \operatorname{Tr}(\mathfrak T^*\mathfrak T)^N$, together with $N\rightarrow \infty$.}
By transfering the weight from the function space to the operator, and getting rid of irrelevant constants, the operator norm of $\mathfrak{L}_{i,j,1}$ from $X^{0,\frac{1}{2}}_\epsilon$ to $X^{0,-\frac{1}{2}}_\epsilon$ equals that of the operator $\mathfrak{K}$
\begin{align*}
\mathfrak{K}: & L^2(\mathbb{R} \times \mathbb{Z}^d) \to L^2(\mathbb{R} \times \mathbb{Z}^d) \\
 & f(\tau_0,k_0) \mapsto \sum_{k_0} \int K(\tau_3,\tau_0,k_3,k_0) f(\tau_0,k_0) \,d\tau_0
 \end{align*}
with kernel
 \begin{align*}
 &K(\tau_3,\tau_0,k_3,k_0)  = \lambda^2\langle \tau_0 + |k_0|^2 \rangle^{-\frac{1}{2}}  \langle \tau_3 + |k_3|^2 \rangle^{-\frac{1}{2}} \sum_{\substack{ k_0+k_1 + k_2 = k_3 }} \mathbf{1}_{C^n_{\ep,1}}(k_0) \\ 
& \qquad \qquad \iint \widetilde{u^i}(k_1,\tau_1)\widetilde{\overline{ u^j}}(k_2,\tau_2) \left(1-\delta(k_1+k_2)-\delta(k_2+k_0)\right) \widehat{\chi}(\tau_3 - \tau_0 -\tau_1 -\tau_2) \, d\tau_1 \, d\tau_2.
\end{align*}
To compute the adjoint kernel, we change variables by setting $(k_0',k_1',k_2',k_3') = (k_3,-k_2,-k_1,k_0)$ and $(\tau_0',\tau_1',\tau_2',\tau_3') = (\tau_3,-\tau_2,-\tau_1,\tau_0)$. Getting rid of primes gives the following formula for the adjoint kernel:
\begin{align*}
&K^*(\tau_3,\tau_0,k_3,k_0)  = \lambda^2\langle \tau_0+|k_0|^2 \rangle^{-\frac{1}{2}}  \langle \tau_3+|k_3|^2 \rangle^{-\frac{1}{2}} \sum_{\substack{ k_0+k_1 + k_2 = k_3 }} \mathbf{1}_{C^n_{\ep,1}}(k_3) \\ 
& \qquad \qquad \iint \widetilde{u^j}(k_1,\tau_1)\widetilde{ \overline{ u^i}}(k_2,\tau_2) \left(1-\delta(k_1+k_2)-\delta(k_2+k_0)\right)  \widehat{\chi}(\tau_3 - \tau_0 -\tau_1 -\tau_2)\,d\tau_1\,d\tau_2
\end{align*}
(here we are using that $\chi$ is even, which can be assumed without loss of generality). Composing and iterating, we see that the operator $(\mathfrak{M})^N = ((\mathfrak{K})^* \mathfrak{K})^N$ has kernel
\begin{align*}
& M^{N}(\tau_{6N},\tau_0,k_{6N},k_0) \\
&\quad = \lambda^{4N} \sum_{\substack{k_1, \dots, k_{6N-1} }}\int_{\mathbb R^{6N-1}} \langle \tau_{6N}+|k_{6N}|^2 \rangle^{-1/2}  \langle \tau_0+|k_0|^2 \rangle^{-1/2} \Delta(\underline{k})  \prod_{m=0}^{N}\mathbf{1}_{C^n_{\ep,1}}(k_{6m})\\
&\qquad  \qquad \prod_{m=0}^{N-1} \widetilde{ u^i}(k_{6m+1},\tau_{6m+1})\widetilde{ \overline{u^j}}(k_{6m+2},\tau_{6m+2})\widetilde{  u^j} (k_{6m+4},\tau_{6m+4})\widetilde{ \overline{ u^i}}(k_{6m+5},\tau_{6m+5})  \\
&\qquad \qquad  \prod_{m=1}^{2N-1} \langle \tau_{3m}+|k_{3m}|^2 \rangle^{-1} \prod_{m=0}^{2N-1} \widehat{\chi}(\tau_{3m+3}-\tau_{3m} -\tau_{3m+1} -\tau_{3m+2}) \, d\tau_1\dots d\tau_{6N-1}
\end{align*}
where $\underline{k} = (k_0,\dots,k_{6N})$ and
\begin{align*}
& \Delta (\underline{k}) = \prod_{m=0}^{2N-1} \delta(k_{3m+3}-k_{3m} - k_{3m+1} - k_{3m+2} )\left(1-\delta(k_{3m+1}+k_{3m+2})-\delta(k_{3m+1}+k_{3m})\right) 
\end{align*}
Setting
\begin{align*}
\omega_{3m} & = \tau_{3m} + |k_{3m}|^2, \quad \omega_{3m+1}= \tau_{3m+1} + |k_{3m+1}|^2 ,\quad \omega_{3m+2}=\tau_{3m+2} - |k_{3m+2}|^2  \\
\Omega_m & = \tau_{3m+3}-\tau_{3m} -\tau_{3m+1} -\tau_{3m+2} \\
& = -|k_{3m+3}|^2 + |k_{3m}|^2+|k_{3m+1}|^2-|k_{3m+2}|^2+\omega_{3m+3}-\omega_{3m}-\omega_{3m+1}-\omega_{3m+2},
\end{align*}
this becomes
\begin{align*}
& M^{N}(\tau_{6N},\tau_0,k_{6N},k_0) =  \lambda^{4N} \sum_{\substack{k_1, \dots, k_{6N-1} }}  \langle \omega_{6N}\rangle^{-\frac 12}  \langle \omega_{0}\rangle^{-\frac 12} \Delta(\underline{k})  \prod_{m=0}^{N}\mathbf{1}_{C^n_{\ep,1}}(k_{6m})\\
& \qquad \qquad \int_{\mathbb R^{6N-1}} \prod_{m=0}^{N-1}  \widetilde{ u^i}(k_{6m+1},\tau_{6m+1})\widetilde{ \overline{ u^j}}(k_{6m+2},\tau_{6m+2}) \widetilde{  u^j}(k_{6m+4},\tau_{6m+4})\widetilde{ \overline{u^i}}(k_{6m+5},\tau_{6m+5})   \\
& \qquad \qquad \qquad \qquad \qquad \qquad  \prod_{m=1}^{2N-1} \langle \omega_{3m} \rangle^{-1} \prod_{m=0}^{2N-1} \widehat{\chi}(\Omega_m) \, d\omega_1\dots d\omega_{6N-1}.
\end{align*}
Taking the trace gives 
\begin{align*}
&\operatorname{Tr} (\mathfrak{M})^N  = \lambda^{4N} \sum_{\underline{k}}\int_{\mathbb R^{6N+1}}  \Delta(\underline{k}) \delta(\omega_0-\omega_{6N})\delta(k_0-k_{6N}) \prod_{m=0}^{N}\mathbf{1}_{C^n_{\ep,1}}(k_{6m})\\
& \prod_{m=0}^{N-1} \widetilde{ u^i} (k_{6m+1},\tau_{6m+1})\widetilde{ \overline{ u^j}}(k_{6m+2},\tau_{6m+2}) \widetilde{  u^j}(k_{6m+4},\tau_{6m+4})\widetilde{ \overline{ u^i}} (k_{6m+5},\tau_{6m+5})  \prod_{m=0}^{2N-1} \langle \omega_{3m} \rangle^{-1}  \widehat{\chi}( \Omega_m) \, d\underline{\omega} 
\end{align*}
This can be represented by the following interaction diagram, in which the input and output frequencies are $k_0$ and $k_{6N}$ respectively, and are equal since the trace was taken.
\vspace*{0.2cm}
\begin{center}
\includegraphics[width=14cm]{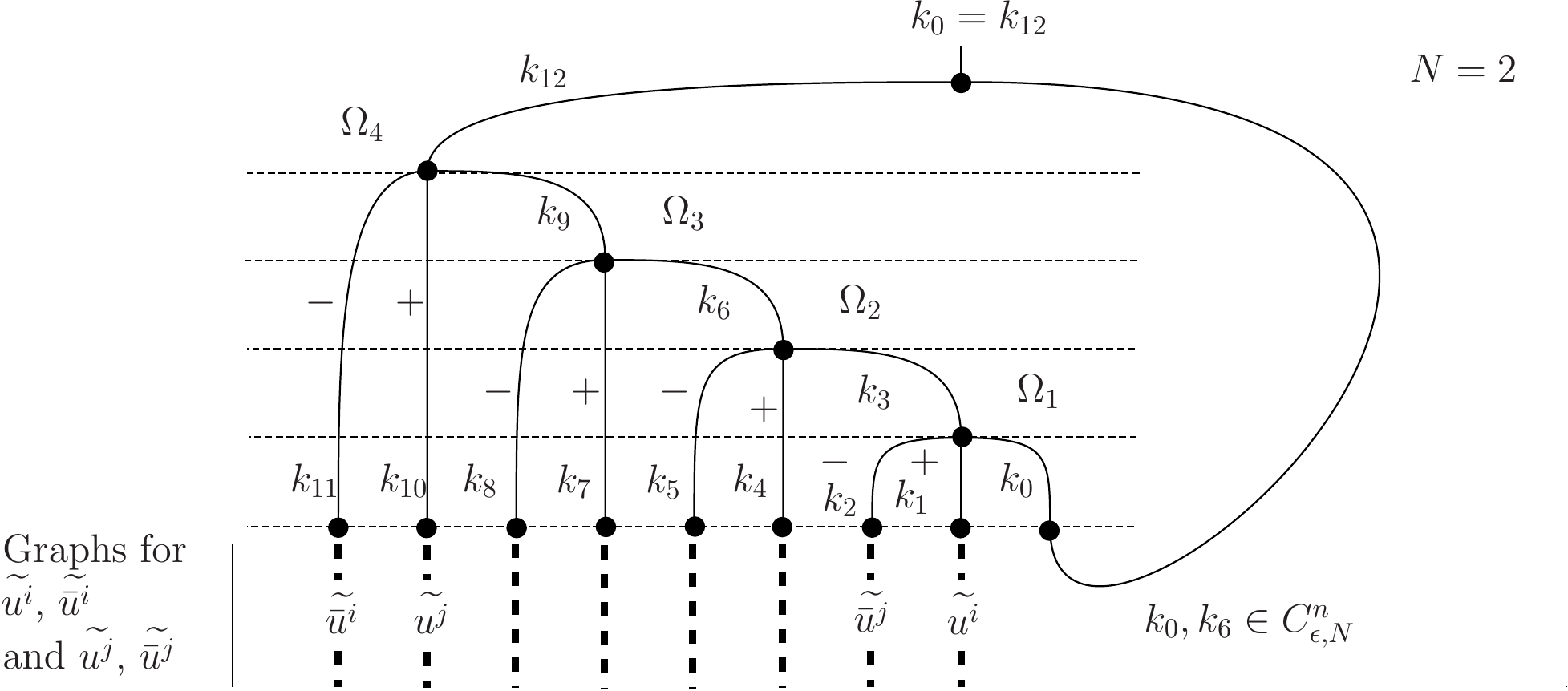}
\end{center}
\vspace*{0.2cm}
We can now take the expectation, and use the computations in Section~\ref{proofxsb} (after multiplying by a harmless cutoff function $\chi (2t)$ all $u^i$'s and $u^j$'s). This gives the bound
\begin{equation}\label{id:traceMNsharpijlineaire}
\mathbb{E} \operatorname{Tr} \mathfrak{M}^N   =   \lambda^{4N(i+j+1)}\ep^{2Nd(i+j+1)} \sum_{\vec \ell=(\ell_1,\ell_2,...,\ell_{4N})\in \mathcal G_i\times \mathcal G_j\times...\times \mathcal G_j}\sum_P \mathcal F(\vec l,P),
\end{equation}
where $\vec \ell=(\ell_1,...,\ell_{4N})$ represents all possible interaction histories for the $u^i$ and $u^j$ terms, $P$ is summed over all possible pairings of the initial vertices, and where 
\begin{align}
& \nonumber \mathcal F(\vec l,P)  =\sum_{\underline k}\int d\underline{\tau} \Delta_{\ell_1,...,\ell_{6N-1},P}(\underline k)\delta(\omega_0-\omega_{6N})\delta(k_0-k_{6N}) \prod_{\{i,j\}\in P} \mathbf 1(|\underline{k}|\lesssim \ep^{-1})\\
\label{id:mathcalFlineaire}& \prod_{m=0}^{2N-1} \prod_{n=1}^i \frac{1}{|\omega_{3m+1}-\widetilde \omega_{3m+1}-\sum_{r= n}^i \Omega_r^{3m+1}+i|}\prod_{n=1}^j \frac{1}{|\omega_{3m+2}-\widetilde \omega_{3m+2}-\sum_{r= n}^j \Omega_r^{3m+2}+i|}  \\
\nonumber & \prod_{m=0}^{2N-1} \widehat \chi (\Omega_m)\langle \omega_{3m}\rangle^{-1}\frac{\widehat \chi (\widetilde \omega_{3m+1})}{|\omega_{3m+1}-\widetilde \omega_{3m+1}+i |}\frac{\widehat \chi (\widetilde \omega_{3m+2})}{|\omega_{3m+2}-\widetilde \omega_{3m+2}+i |}d\omega_{3m}d\omega_{3m+1}d\omega_{3m+2}d\widetilde \omega_{3m+1}d\widetilde \omega_{3m+2}
\end{align}
where for $n=1,2$:
$$
\widetilde \omega_{3m+n}=\tau_{3m+n}-\widetilde \tau_{3m+n},
$$
and where $\Delta_{\ell_1,...,\ell_{6N-1},P}(\underline k)$ records all Kirchhoff laws of the graph. Once a pairing has been fixed, we can represent formula \fref{id:mathcalFlineaire} as an interaction diagram in which initial vertices are paired. This is done the same way as in Subsection \ref{sectionfeynman}. Then we construct a spanning tree for the graph the same way as in Theorem \ref{th:spanning}. For this construction, we treat first each subgraph of $u^i$ and $u^j$, from right to left; then we turn to the vertices corresponding to the resonance moduli $\Omega_m$, in increasing order of $m$.

We denote by $\widetilde n$ the number of free edges ending to or starting from the top vertices. We denote by $\widetilde k_{1}^f,...,\widetilde k_{\widetilde n}^f$ the corresponding free frequencies, where the order is from right to left in the graph. The spanning tree then yields a collection of free edges $(k^f_i)_{i\in \{1,...,2N(i+j+1)+1-\widetilde n\}}$ and $(\widetilde k_{i}^f)_{1\leq i \leq \widetilde n}$ from which all other edges are recovered in the graph using the Kirchhoff laws. An example for free edges for the $\Omega_m$'s vertices is as follows:

\begin{center}
\includegraphics[width=14cm]{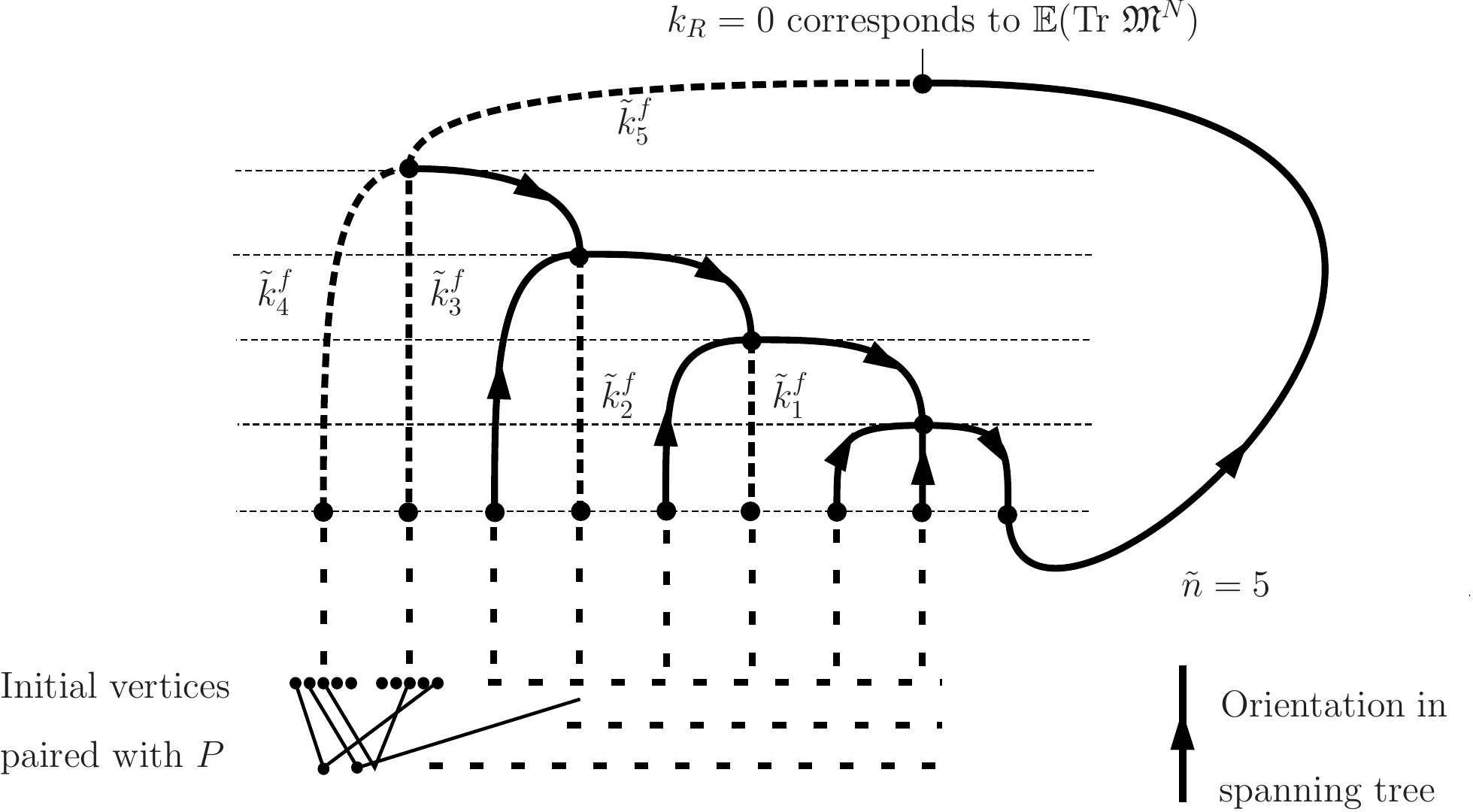}
\end{center}

Not that, since in the spanning tree algorithm top vertices are considered after the vertices associated to the graphs of $u^i$ and $u^j$, therefore frequencies below top vertices are only sums of free frequencies associated to top vertices. Namely, for all $m\in \{0,...,6N\}$, if $k_m$ is not a free frequency, then $k_m$ is a sum of free frequencies among $\widetilde k_1^f,...,\widetilde k^f_{\widetilde n}$ only, that moreover appear after $k_m$ for the natural ordering of the graph.\\

We define degree zero, one and two vertices as in Subsection \ref{subsec:kirchhoff}, and Lemma \ref{lem:Lpinter} holds true. We denote by $n_0$, $n_1$ and $n_2$ the number of degree zero, one and two interaction vertices \emph{inside} a graph for either $u^i$ or $u^j$. The other vertices are those associated with the phases $\Omega_1,...,\Omega_{2N}$ on top of the graph and we denote by $\widetilde n_0$, $\widetilde n_1$ and $\widetilde n_2$ the number of degree zero, one and two vertices among these vertices. In particular we get:
\be \label{id:relationntildenlineaire}
n_0+\widetilde n_0+n_1+\widetilde n_1+n_2+\widetilde n_2=2N(i+j+1), \quad \mbox{and} \quad n_1+\widetilde n_1+2n_2+2\widetilde n_2=2N(i+j+1).
\ee
$$
\widetilde n_1+2\widetilde n_2=\widetilde n-1.
$$

\subsection{Bounding the trace}
We are now ready to estimate \fref{id:mathcalFlineaire}. First, we notice that as in the proof for the $X^{s,b}$ estimate for $u^i$, if one takes $|\omega_{3m+1}|\geq \ep^{-K}$ or $|\omega_{3m+2}|\geq \ep^{-K}$ then the contribution of the third line in \fref{id:mathcalFlineaire}, once integrated, is negligible. Hence, we focus on the case for which $|\omega_{3m+1}|,|\omega_{3m+2}|\leq \ep^{-K}$ for $m=0,...,2N-1$. We first integrate over all vertices inside the graphs for $u^i$ and $u^j$, following the natural time ordering of the graph, from right to left. Using Lemmas \ref{macareux1} and \ref{macareux2} this produces
\begin{align*}
\mathcal F(\vec l,P) & \lesssim \epsilon^{-\kappa} (\ep^{-2(d-1)})^{n_2} \ep^{-(d-1)n_1} \sum_{\underline{\widetilde k^f}}\int \Delta(\underline k)\delta(\tau_0-\tau_{6N})\delta(k_0-k_{6N})\\
\nonumber & \prod_{m=0}^{2N-1} \widehat \chi (\Omega_m)\langle \omega_{3m}\rangle^{-1}\frac{\widehat \chi (\widetilde \omega_{3m+1})}{|\omega_{3m+1}-\widetilde \omega_{3m+1}+i |}\frac{\widehat \chi (\widetilde \omega_{3m+2})}{|\omega_{3m+2}-\widetilde \omega_{3m+2}+i |}\\
&d\omega_{3m}d\omega_{3m+1}d\omega_{3m+2}d\widetilde \omega_{3m+1}d\widetilde \omega_{3m+2}\prod_{m=0}^{2N}\mathbf 1(|\omega_{3m+1}|,|\omega_{3m+2}|\leq \ep^{-K})  \mathbf 1(|\underline{\widetilde k^f}|\lesssim \ep^{-1})+O(\ep^K) \\
&\lesssim  \epsilon^{-\kappa}(\ep^{-2(d-1)})^{n_2} \ep^{-(d-1)n_1} \left(\underbrace{ \sum_{\underline{\widetilde k^f}}\int_{\forall m, \ |\omega_{3m}|\leq \ep^{-\widetilde K}}[...]}_{\displaystyle \mathcal F_1(G,P)} + \underbrace{\sum_{\underline{\widetilde k^f}}\ \int_{\exists m, \ |\omega_{3m}|\geq \ep^{-{\widetilde K}}}[...]}_{\displaystyle \mathcal F_2(G,P)}
\right)
\end{align*}
where $\widetilde K \gg K$ is another very large constant. The second part $\mathcal F_2$ gives an irrelevant contribution. Indeed, we write:
$$
\mathcal F_2(G,P)\leq  \sum_{m=0}^{2N-1} \int_{S_m}[...]
$$
where we define $S_m=\{|\omega_{3m}|\geq \ep^{-K}\mbox{ and } |\omega_{3m'}|\leq |\omega_{3m}| \ \forall m'\}$. Notice that on $S_m$ that we have $\Omega_m=\omega_{3m+3}-\omega_{3m}+O(C\ep^{-K})$ so that if all momenta are kept fixed:
$$
\int_{|\omega_{3m+3}|\leq |\omega_{3m}|} \langle \omega_{3m+3}\rangle^{-1} \widehat \chi (\Omega_{3m})d\omega_{3m+3}\lesssim \langle \omega_{3m} \rangle^{-1}.
$$
Hence, on $S_m$, integrating first with respect to $\omega_{3m+3}$ using the above estimate, then integrating over all $\omega_{3m'}$ for $m'\neq m$ producing a $\langle \ln \ep \rangle$ factor, then integrating over all $\widetilde k^f_i$ producing a $\ep^{-d(\widetilde n_1+2\widetilde n_2)}$ factor, and over $k_{6N}$ producing a $\ep^{-d}$ factor, we arrive at:
$$
 \int_{S_m}[...] \lesssim \epsilon^{-\kappa} \ep^{-O(N)} \int_{|\omega_{3m}|\geq \ep^{-\widetilde K}} \langle \omega_{3m} \rangle^{-2}d\omega_{3m}\lesssim \ep^{\frac{\widetilde K}{2}}=O(\ep^K).
$$
This shows that $\mathcal F_2$ gives an irrelevant contribution:
$$
\mathcal F_2(G,P)=O(\ep^{K}).
$$
We now claim that for any $P$ and $\vec l$,
$$
\mathcal F_1(\vec l,P) \lesssim   \epsilon^{-\kappa} \ep^{-2Nd(i+j+1)+2N(i+j+1)-d}
$$
As in earlier estimates, we integrate iteratively at interaction vertices from left to right, each time over the free variables below it. When the vertex is of degree zero we get a factor $1$, when it is of degree one we use Lemma \ref{colombe}, and when it is of degree 2 we use Lemma \ref{lemmavertextwo}.

 At the end of the integration process, we integrate over the last free variable $k_{6N}$ which produces a factor $\ep^{-d}$, and integrate $\langle \omega_i\rangle^{-1}d\omega_i$ and $\widehat \chi ( \widetilde \omega_i)d\widetilde \omega_i$ over the $\omega_i$ and $\widetilde \omega_i$'s variables which, from the constraint $|\omega_i|\leq \ep^{-K}$ for all $i$, produces a factor $\langle \ln \ep \rangle^{2N}$. This produces the following estimate:
\begin{align}
\nonumber \mathcal F(\vec l,P) &\lesssim \epsilon^{-\kappa}(\ep^{-2(d-1)})^{n_2} \ep^{-(d-1)n_1} \ep^{-2(d-1)\widetilde n_2}\ep^{-(d-1)\widetilde n_1}\ep^{-d}\\
\label{bd:estimationgraphlineairesharpij} &\lesssim \epsilon^{-\kappa} \ep^{-2Nd(i+j+1)+2N(i+j+1)-d},
\end{align}
thanks to \fref{id:relationntildenlineaire}.

\subsection{End of the proof of Lemma~\ref{lem:estimationmathfrakLijn}}

We can now end the proof of the Lemma. From the identity \fref{id:traceMNsharpijlineaire} and \fref{bd:estimationgraphlineairesharpij}, we obtain that the trace of $\mathfrak M^N$ is independent of $n$ and satisfies:
\begin{align*}
\mathbb{E} \operatorname{Tr} \mathfrak{M}^N& \lesssim  \lambda^{4N(i+j+1)}\ep^{2Nd(i+j+1)} \epsilon^{-\kappa} \ep^{-2Nd(i+j+1)+2N(i+j+1)-d}\\
&= \left(\frac{1}{T_{kin}}\right)^{N(i+j+1)} \ep^{-d}\epsilon^{-\kappa}.
\end{align*}
Hence, via Bienaym\'e-Tchebychev, for any $\kappa$, there exists a set of measure greater than $1-\ep^\kappa$ such that
$$
\operatorname{Tr} \mathfrak{M}^N \lesssim \left(\frac{1}{T_{kin}}\right)^{N(i+j+1)} \ep^{-d- 2\kappa}.
$$
On this set,
$$
\| \mathfrak L_{i,j,n}\|_{X^{0,\frac 12}\rightarrow X^{0,-\frac 12}}\leq \left(\operatorname{Tr} \mathfrak{M}^N\right)^{\frac{1}{2N}}\lesssim \left(\frac{1}{T_{kin}}\right)^{\frac{i+j+1}{2}} \ep^{-\frac{d+2\kappa}{2N}}\epsilon^{-\kappa} \lesssim  \left(\frac{1}{T_{kin}}\right)^{\frac{i+j+1}{2}} \ep^{-\kappa}
$$
choosing $N$ large enough.

\section{The nonlinear terms: proof of propositions~\ref{propbilinear} and~\ref{proptrilinear}}

\label{sectionnonlinear}

We will consider in this section that
\begin{align*}
& \mathcal{B}(u) = \lambda^2 \left[ 2 |u|^2 v^{app} + u^2 \overline{v^{app}} \right] \\
& \mathcal{T}(u) = \lambda^2  |u|^2 u.
\end{align*}
Indeed, the additional terms in the definition of $\mathcal{B}$ and $\mathcal{T}$ in Section~\ref{sectionproofmaintheorem} can be treated similarly (actually, even more simply).

\subsection{A simplified case: restricting to low frequencies}
We add here a projector on frequencies $\lesssim \frac{1}{\epsilon}$, which is the range of interest physically. It greatly simplifies nonlinear estimates and in particular
$$
\| P_{\epsilon,1} f \|_{X^{s,b}_\epsilon} \sim \| f \|_{X^{0,b}},
$$
where the projection operator $P_{\epsilon,1}$ is defined in Section~\ref{notations}, and we simply denote $X^{0,b}$ instead of $X^{0,b}_\epsilon$. We will use the shorthand
$$
u_\epsilon = P_{\epsilon,1}u.
$$

\begin{proposition} For any $\kappa>0$, there exists $b>\frac{1}{2}$ such that
$$
\left\| \chi(t) \int_0^t  e^{i(t-s) \Delta} \chi(s) |u_\epsilon|^2 u_\epsilon \, ds \right\|_{X^{0,b}} \lesssim \epsilon^{2-d-\kappa} \| u_\epsilon \|_{X^{0,b}}^3
$$
\end{proposition}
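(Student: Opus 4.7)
The plan is to combine three standard ingredients: (i) the classical $X^{s,b}$ Duhamel estimate, (ii) the dual Strichartz-type bound \eqref{mesangebleue2} that transfers $L^{4/3}L^{4/3}$ norms into negative-regularity Bourgain norms, and (iii) a direct Strichartz inequality on the torus which captures the $\epsilon$-gain coming from the frequency localisation of $u_\epsilon$ at scale $\epsilon^{-1}$.

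First, by the standard retarded Duhamel estimate in $X^{s,b}$ spaces (the analogue of \eqref{mesangebleue0} with $s=0$), for any $b>\tfrac12$ close enough to $\tfrac12$,
\[
\Big\|\chi(t)\int_0^t e^{i(t-s)\Delta}\chi(s) F\,ds\Big\|_{X^{0,b}} \lesssim \|F\|_{X^{0,b-1}}.
\]
It therefore suffices to prove $\| |u_\epsilon|^2 u_\epsilon\|_{X^{0,b-1}} \lesssim \epsilon^{2-d-\kappa}\|u_\epsilon\|_{X^{0,b}}^3$. Since $b-1$ lies slightly above $-\tfrac12$, the dual Strichartz bound \eqref{mesangebleue2}, which reads $\|F\|_{X^{0,b'}} \lesssim \epsilon^{1/2-d/4-\kappa}\|F\|_{L^{4/3}_tL^{4/3}_x}$ for $b'<-\tfrac12$, cannot be applied directly. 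Instead, I would interpolate it against the elementary identity $\|F\|_{X^{0,0}} = \|F\|_{L^2_tL^2_x}$; choosing $b$ and $-b'$ both sufficiently close to $\tfrac12$, the interpolation parameter approaches $1$ on the Strichartz side, so that the $\epsilon^{1/2-d/4}$ gain is preserved up to a subpolynomial $\epsilon^{-\kappa}$ loss, and the Lebesgue exponent on the right-hand side stays arbitrarily close to $4/3$.

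Next, by H\"older's inequality,
\[
\| |u_\epsilon|^2 u_\epsilon\|_{L^{4/3}_tL^{4/3}_x} \leq \|u_\epsilon\|_{L^4_{t,x}}^3,
\]
and, since $u_\epsilon$ is Fourier-localised in $\{|k|\lesssim \epsilon^{-1}\}$, the Bourgain--Demeter $\ell^2$-decoupling Strichartz estimate on $\mathbb{T}^d$ (Bourgain's $L^4$ estimate for $d=2$) yields
\[
\|u_\epsilon\|_{L^4_{t,x}} \lesssim \epsilon^{-\tfrac{d-2}{4}-\kappa}\,\|u_\epsilon\|_{X^{0,b}},
\]
for $b>\tfrac12$ close enough to $\tfrac12$. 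To control the $L^2_{t,x}$ endpoint appearing in the interpolation, I would use Bernstein: $\|u_\epsilon\|_{L^\infty_x}\lesssim \epsilon^{-d/2}\|u_\epsilon\|_{L^2_x}$, which gives $\||u_\epsilon|^2 u_\epsilon\|_{L^2_{t,x}}\lesssim \epsilon^{-d}\|u_\epsilon\|_{X^{0,b}}^3$. Combining through the interpolation, the arithmetic
\[
\tfrac12 - \tfrac{d}{4} + 3\cdot\Bigl(-\tfrac{d-2}{4}\Bigr) \;=\; 2-d
\]
produces precisely the claimed exponent $\epsilon^{2-d-\kappa}\|u_\epsilon\|_{X^{0,b}}^3$.

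The main technical obstacle is the interpolation in the second step: the dual Strichartz estimate \eqref{mesangebleue2} only controls $X^{0,b'}$ with $b'<-\tfrac12$ \emph{strictly}, while the target $b-1>-\tfrac12$ lies on the wrong side of the threshold. The interpolation against $X^{0,0}=L^2L^2$ must be tuned carefully so that the $\epsilon^{1/2-d/4}$ gain is preserved, since the cruder $L^2$-endpoint carries a much worse $\epsilon^{-d}$ factor and a naive interpolation would destroy the final exponent $2-d$. Once the interpolation weight is placed close enough to the Strichartz endpoint (which is permissible by taking $b$ as close to $\tfrac12$ as one wishes, at the cost of further $\kappa$-losses), the remaining steps are routine applications of the $X^{s,b}$ calculus and Bernstein.
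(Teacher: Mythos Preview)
Your proposal is correct and follows essentially the same route as the paper: reduce via the Duhamel estimate~\eqref{mesangebleue0}, obtain the sharp bound $\epsilon^{2-d-\kappa}$ at the endpoint $X^{0,b'}$ with $b'<-\tfrac12$ by combining the dual Strichartz inequality~\eqref{mesangebleue2}, H\"older, and the $L^4$ Strichartz estimate~\eqref{mesangebleue1}, then interpolate against a crude $X^{0,0}=L^2L^2$ bound losing only $\epsilon^{-d}$, with the interpolation weight pushed toward the $b'$ endpoint by taking $b$ close to $\tfrac12$. The only cosmetic difference is that the paper controls the $X^{0,0}$ endpoint via $\|u_\epsilon\|_{L^\infty L^6}^3$ and Bernstein at $L^6$, while you use $\|u_\epsilon\|_{L^\infty_x}$ directly; both give the same $\epsilon^{-d}$ factor.
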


\begin{proof} First, by~\eqref{mesangebleue0}, for $b > \frac{1}{2}$,
$$
\left\| \chi(t) \int_0^t e^{i(t-s) \Delta} \chi(s) |u_\epsilon|^2 u_\epsilon \, ds \right\|_{X^{0,b}} \lesssim_b \left\| \chi(t) |u_\epsilon|^2 u_\epsilon \right\|_{X^{0,b-1}}.
$$

\bigskip \noindent \underline{Estimate $(X^{0,b})^3 \to X^{0,b'}$, with $b>\frac{1}{2}$, $b' < - \frac{1}{2}$} Applying successively~\eqref{mesangebleue2}, H\"older's inequality, and~\eqref{mesangebleue1}, we get
\begin{align*}
\left\| \chi(t)  |u_\epsilon|^2 u_\epsilon \right\|_{X^{0,b'}} & \lesssim_{b',\kappa} \epsilon^{\frac{1}{2} - \frac{d}{4} -\kappa} \|   |u_\epsilon|^2 u_\epsilon \|_{L^{4/3}_2 L^{4/3}} \\
& \lesssim  \epsilon^{\frac{1}{2} - \frac{d}{4}-\kappa} \| u_\epsilon \|_{L^4_2 L^4}^3 \\
& \lesssim_{b,\kappa} \epsilon^{2 - d - 4 \kappa} \| u_\epsilon \|_{X^{0,b}}^3
\end{align*}

\bigskip \noindent \underline{Estimate $(X^{0,b})^3 \to X^{0,0}$ for $b>\frac{1}{2}$} Using successively the definition of $X^{s,b}$, H\"older's inequality, the Sobolev embedding theorem and~\eqref{cormorant},
\begin{align*}
\left\| \chi(t) |u_\epsilon|^2 u_\epsilon \right\|_{X^{0,0}} & \lesssim  \left\| |u_\epsilon|^2 u_\epsilon \right\|_{L^2_2 L^2} \leq \| u_\epsilon \|_{L^\infty_2 L^6}^3 \\
& \lesssim \epsilon^{-d} \| u_\epsilon \|_{L^\infty L^2}^3 \\
& \lesssim_b  \epsilon^{-d} \| u_\epsilon \|_{X^{0,b}}^3.
\end{align*}

\bigskip \noindent \underline{Interpolation.} Interpolating between the two estimates above gives that for any $\kappa>0$, there exists $b>\frac{1}{2}$ such that
$$
\left\| \chi(t) |u_\epsilon|^2 u_\epsilon \right\|_{X^{0,b-1}} \lesssim_{b,\kappa} \epsilon^{2 - d-\kappa} \left\| u_\epsilon \right\|_{X^{0,b}}^3.
$$
\end{proof}

\begin{proposition} For any $\kappa>0$, if $\mu$ in Corollary \ref{aigleroyal} is chosen sufficiently small, there exists $b> \frac{1}{2}$ such that
$$
\left\| \chi(t) \int_0^t  e^{i(t-s) \Delta} \chi(s) |u_\epsilon|^2 v^{app} \, ds \right\|_{X^{0,b}} \lesssim \epsilon^{{-2\mu}}  \epsilon^{\frac{1}{2} - \frac{d}{2}}  \| u_\epsilon \|_{X^{0,b}}^2.
$$
The same result holds if $|u_\epsilon|^2 v^{app}$ is replaced by $(u_\epsilon)^2 \overline{v^{app}}$.
\end{proposition}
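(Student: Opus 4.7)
My plan is to follow the blueprint of Proposition~\ref{proptrilinear} above, but take a simpler route through $L^2_{t,x}$ instead of $L^{4/3}_{t,x}$. The point is that here only two free factors of $u_\epsilon$ must be absorbed into the Bourgain norm, so the target $\epsilon^{1/2-d/2}$ is in fact weaker than the cubic gain $\epsilon^{2-d}$ as soon as $d\geq 3$; consequently no interpolation between $X^{0,b'}$ and $X^{0,0}$ will be required, and the crude embedding $\|\cdot\|_{X^{0,b-1}}\leq \|\cdot\|_{L^2_{t,x}}$ (valid since $b\leq 1$ gives $\langle \tau+|k|^2\rangle^{b-1}\leq 1$) is enough, provided that $v^{app}$ is controlled in a sufficiently high Lebesgue norm. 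I would start from the Duhamel $X^{s,b}$ inequality, analogous to~\eqref{mesangebleue0}, to get
\[
\Bigl\| \chi(t)\int_0^t e^{i(t-s)\Delta}\chi(s) |u_\epsilon|^2 v^{app} \, ds \Bigr\|_{X^{0,b}} \lesssim_b \|\chi(t)|u_\epsilon|^2 v^{app}\|_{X^{0,b-1}} \leq \|\chi(t)|u_\epsilon|^2 v^{app}\|_{L^2_{t,x}}.
\]

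The next step would be a H\"older splitting in both time and space with exponents $p\geq 4$ and $q\geq 2$ obeying $\tfrac{2}{p}+\tfrac{1}{q}=\tfrac{1}{2}$:
\[
\|\chi|u_\epsilon|^2 v^{app}\|_{L^2_{t,x}} \leq \|u_\epsilon\|_{L^p_2 L^p}^2 \, \|v^{app}\|_{L^q_2 L^q}.
\]
For the deterministic factor I would invoke Bourgain's Strichartz inequality on $\mathbb T^d$ for Fourier supports in $|k|\leq \epsilon^{-1}$, namely, for $p\geq 2(d+2)/d$,
\[
\|u_\epsilon\|_{L^p_2 L^p}\lesssim_\kappa \epsilon^{(d+2)/p - d/2 - \kappa}\|u_\epsilon\|_{X^{0,b}_\epsilon}.
\]
For the random factor I would sum Corollary~\ref{aigleroyal}'s $L^q_T L^q$ bound over the iterates $n\leq N$ (the geometric series in $T_{kin}^{-n/2}$ is summable), obtaining $\|v^{app}\|_{L^q_2 L^q}\lesssim_{N,\mu}\epsilon^{-\mu-1/2+1/q}$ whenever $q\geq 2$.

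Inserting these two inputs and eliminating $p$ via $2/p+1/q=1/2$, the aggregate exponent of $\epsilon$ simplifies to $\tfrac{1}{2}-\tfrac{d}{2}-\tfrac{d+1}{q}-\mu-\kappa$. The final step is then to choose $q$, an integer with $q\geq (d+1)/\mu$, which forces $p=4q/(q-2)$ to stay just above $4$ and hence comfortably above the Strichartz endpoint $2(d+2)/d$ for every $d\geq 2$. This yields the claimed bound $\lesssim\epsilon^{-2\mu}\epsilon^{1/2-d/2}\|u_\epsilon\|_{X^{0,b}_\epsilon}^2$, with the subpolynomial $\epsilon^{-\kappa}$ absorbed by taking the $\mu$ of Corollary~\ref{aigleroyal} small enough. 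The case $(u_\epsilon)^2\overline{v^{app}}$ is identical since the complex conjugation does not affect any of the Lebesgue or Bourgain norms used.

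The step most likely to require care is the Bourgain--Strichartz estimate for the specific range of exponents $p\in(4,4+O(\mu)]$ on the rational torus, with the correct $\epsilon$-scaling coming from the Fourier localization in the ball of radius $\epsilon^{-1}$; this is however standard and follows either directly from the discrete restriction theorems or, after rescaling to the torus of size $\epsilon^{-1}$, from~\cite{BD}.
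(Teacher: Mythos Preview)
Your argument is correct and in fact somewhat more direct than the paper's. The paper proceeds by first estimating $\|\chi|u_\epsilon|^2 v^{app}\|_{X^{0,b'}}$ for $b'<-\tfrac12$ via the dual endpoint embedding $L^1_tL^2_x\hookrightarrow X^{0,b'}$ (inequality~\eqref{mesangebleue3}), with the H\"older splitting $\|v^{app}\|_{L^2_tL^\infty_x}\|u_\epsilon\|_{L^4_{t,x}}^2$; then it obtains a crude $X^{0,0}$ bound and interpolates between the two to reach $X^{0,b-1}$ with $b>\tfrac12$. You instead bypass the interpolation entirely by using the trivial monotonicity $\|\cdot\|_{X^{0,b-1}}\le\|\cdot\|_{L^2_{t,x}}$ (valid since $b-1<0$), and compensate by taking the two $u_\epsilon$ factors in $L^p_{t,x}$ with $p$ slightly above $4$ and $v^{app}$ in a very high $L^q_{t,x}$. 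The price is the extra $(d+1)/q$ loss in the $\epsilon$-exponent, which you correctly absorb into $\epsilon^{-\mu}$ by choosing $q$ large; the paper pays an analogous price through the interpolation parameter. Your $L^p$ Strichartz input for $p\in(4,4+O(\mu)]$ is indeed available: it follows either from Bourgain--Demeter, or more cheaply by interpolating the $L^4$ estimate~\eqref{mesangebleue1} with the Bernstein bound $\|u_\epsilon\|_{L^\infty_{t,x}}\lesssim\epsilon^{-d/2}\|u_\epsilon\|_{X^{0,b}}$, which yields exactly the exponent $(d+2)/p-d/2$ you use. Both routes give the same final bound; yours is shorter, the paper's keeps the Strichartz input at the single exponent $p=4$.
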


\begin{proof}
First, by~\eqref{mesangebleue0}, for $b > \frac{1}{2}$,
$$
\left\| \chi(t) \int_0^t e^{i(t-s) \Delta} \chi(s)  |u_\epsilon|^2 v^{app} \, ds \right\|_{X^{0,b}} \lesssim_b \left\| \chi(t) |u_\epsilon|^2 v^{app} \right\|_{X^{0,b-1}}.
$$

\bigskip \noindent \underline{Estimate $(X^{0,b})^3 \to X^{0,b'}$, with $b>\frac{1}{2}$, $b' < - \frac{1}{2}$} Applying successively the Sobolev embedding theorem, H\"older's inequality, and Corollary \ref{aigleroyal}, we get for $p \geq q \geq \frac{1}{2}$
\begin{align*}
\| v^{app} \|_{L^q_2 L^\infty} & \lesssim \epsilon^{-\frac{d}{p}}\| v^{app} \|_{L^q_2 L^p} \lesssim \epsilon^{-\frac{d}{p}}  \| v^{app} \|_{L^p_2 L^p}\\
&  \lesssim \epsilon^{-\mu} \epsilon^{-\frac dp}  (\epsilon^{-1})^{\frac{1}{2} - \frac{1}{p}} =  \epsilon^{-\frac{1}{2} + \frac{1-d}{p}-\mu}.
\end{align*}
As a consequence, for any $\kappa>0$ for $p$ large enough:
\begin{equation}
\label{mesangecharbonniere}
\| v^{app} \|_{L^q_2 L^\infty} \lesssim \epsilon^{-\frac{1}{2} - \mu-\kappa}.
\end{equation}
Therefore, taking $q=2$, by~\eqref{mesangebleue3}, H\"older's inequality, and~\eqref{mesangebleue1}:
\begin{eqnarray*}
\| \chi(t) |u_\epsilon|^2 v^{app} \|_{X^{0,b'}} &\lesssim& \| |u_\epsilon|^2v^{app} \|_{L^1_2 L^2} \lesssim \| v^{app} \|_{L^2 L^\infty}  \| u_\epsilon \|_{L^4_2 L^4}^2 \lesssim \epsilon^{-\frac{1}{2} -\mu-\kappa} \epsilon^{1 - \frac{d}{2} - \kappa} \| u_\epsilon \|_{X^{0,b}}^2\\
&\lesssim & \ep^{\frac 12 -\frac d2{-\mu-\kappa}}\| u_\epsilon \|_{X^{0,b}}^2.
\end{eqnarray*}

\bigskip \noindent \underline{Estimate $(X^{0,b})^3 \to X^{0,0}$ for $b>\frac{1}{2}$} By definition of $X^{s,b}$, H\"older's inequality, the Sobolev embedding theorem and~\eqref{cormorant},
\begin{align*}
\| \chi(t) |u_\epsilon|^2 v^{app} \|_{X^{0,0}} & \lesssim \| |u_\epsilon|^2 v^{app} \|_{L^2_2 L^2} \lesssim \| v^{app} \|_{L^\infty L^6} \|u_\epsilon\|_{L^\infty L^6}^2 \\
& \lesssim \epsilon^{-2d/3} \| u_\epsilon \|_{L^\infty L^2}^2 \leq  \epsilon^{-2d/3} \| u_\epsilon \|_{X^{0,b}}^2.
\end{align*}

\bigskip \noindent \underline{Interpolation.} Interpolating between the two estimates above and taking $\kappa$ small enough depending on $\mu$ gives the desired result.
\end{proof}

\subsection{Proof of Proposition~\ref{proptrilinear}: the trilinear bound} We aim at proving that if $s > \frac{d}{2} - 1$, for any $\kappa>0$ there exists $b>\frac{1}{2}$ such that
$$
\left\| \chi(t) \int_0^t e^{i(t-s) \Delta} \chi(s) |u|^2 u \, ds \right\|_{X^{s,b}_\epsilon} \lesssim \epsilon^{2-d-\kappa} \| u \|_{X^{s,b}_\epsilon}^3.
$$
The starting point is~\eqref{mesangebleue0}, which gives, for $b > \frac{1}{2}$,
$$
\left\| \chi(t) \int_0^t e^{i(t-s) \Delta} \chi(s) |u|^2 u \, ds \right\|_{X^{s,b}_\epsilon} \lesssim_b \left\| \chi(t) |u|^2 u \right\|_{X^{s,b-1}_\epsilon}.
$$
By duality, it will therefore suffice to show that
\begin{equation}
\label{goldfinch}
\sup_{\| v \|_{X^{-s,1-b}_\epsilon }\leq 1} \iint \chi(t) |u|^2 u \overline{v} \,dx\,dt \lesssim \epsilon^{2-d-\kappa} \| u \|_{X^{s,b}_\epsilon}^3.
\end{equation}
 As a preparation for this estimate, we will use the following lemma. Recall that the projection operators $P_{\epsilon,N}$ and $Q_{\epsilon,N}^n$ are defined in Section~\ref{notations}.
 
 \begin{lemma} \label{robin} If $N_1 \leq N_2 \in 2^{\mathbb{N}_0}$, for any $\kappa>0$ there exists $b_0 < \frac{1}{2}$ such that
 $$
 \| P_{\epsilon, N_1} u P_{\epsilon, N_2} v \|_{L^2_2 L^2} \lesssim N_1^{\frac{d}{2}-1 + \kappa} \epsilon^{-\frac{d}{2}+1-\kappa} \| P_{\epsilon, N_1} u \|_{X^{0,b_0}_{\epsilon}} \| P_{\epsilon, N_2} v \|_{X^{0,b_0}_{\epsilon}}
$$
The same holds if $u$ or $v$ are replaced by their complex conjugates.
\end{lemma}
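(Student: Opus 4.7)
\emph{Plan.} Lemma~\ref{robin} is the scaled version of Bourgain's bilinear Strichartz estimate on $\mathbb{T}^d$. First I would note that at regularity $s=0$ the scaled and unscaled Bourgain norms coincide, $\|\cdot\|_{X^{0,b}_\epsilon}=\|\cdot\|_{X^{0,b}}$, so the statement reduces to an estimate in the standard $X^{0,b}(\mathbb{R}\times\mathbb{T}^d)$ spaces for functions $P_{\epsilon,N_i}u$ whose space-Fourier support is contained in a cube of side $\sim N_i/\epsilon$.

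The core input is the classical bilinear Strichartz inequality of Bourgain (combined with the $\ell^2$-decoupling theorem of Bourgain--Demeter in dimension $d\geq 3$) on $\mathbb T^d$: for any $\kappa'>0$ and any two functions $f,g$ on $\mathbb{R}\times\mathbb{T}^d$ with space-Fourier supports in cubes of sides $M_1\leq M_2$ in $\mathbb Z^d$,
\begin{equation*}
\|fg\|_{L^2_t L^2_x([0,2]\times\mathbb{T}^d)} \lesssim_{\kappa'} M_1^{(d-2)/2+\kappa'}\,\|f\|_{X^{0,1/2+\kappa'}}\,\|g\|_{X^{0,1/2+\kappa'}}.
\end{equation*}
The factor $M_1^{(d-2)/2+\kappa'}$ is obtained after Plancherel and Cauchy--Schwarz in the modulation variables, and reflects the bound $\#\{k_1\in\mathbb{Z}^d\cap B_{M_1}:\,|k_1|^2+|k-k_1|^2=\mathrm{const}\}\lesssim M_1^{d-2+\kappa'}$ for lattice points on a sphere, established by Bourgain--Demeter's decoupling. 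Applied with $M_i=2N_i/\epsilon$ and using $(N_1/\epsilon)^{(d-2)/2+\kappa'}=N_1^{d/2-1+\kappa'}\,\epsilon^{-d/2+1-\kappa'}$, this immediately yields
\begin{equation*}
\|P_{\epsilon,N_1}u\cdot P_{\epsilon,N_2}v\|_{L^2_2 L^2}\lesssim N_1^{d/2-1+\kappa'}\epsilon^{-d/2+1-\kappa'}\|P_{\epsilon,N_1}u\|_{X^{0,1/2+\kappa'}}\|P_{\epsilon,N_2}v\|_{X^{0,1/2+\kappa'}},
\end{equation*}
which already matches the $N_1$ and $\epsilon$ powers appearing in the lemma, only at the wrong $b$-exponent $1/2+\kappa'$.

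To lower both $b$-exponents strictly below $1/2$, I would interpolate the previous display against the trivial bound obtained by placing the high-frequency factor in $L^2_{t,x}$ and the low-frequency factor in $L^\infty_{t,x}$ via Bernstein and the embedding $X^{0,1/2+\kappa'}\hookrightarrow L^\infty_t L^2_x$:
\begin{equation*}
\|P_{\epsilon,N_1}u\cdot P_{\epsilon,N_2}v\|_{L^2_t L^2_x}\lesssim (N_1/\epsilon)^{d/2}\,\|P_{\epsilon,N_1}u\|_{X^{0,1/2+\kappa'}}\|P_{\epsilon,N_2}v\|_{X^{0,0}},
\end{equation*}
together with its symmetric counterpart in which $P_{\epsilon,N_2}v$ plays the role of the factor in $X^{0,1/2+\kappa'}$. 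Bilinear complex interpolation (applied once in each slot, using the standard identity $[X^{0,\beta_0},X^{0,\beta_1}]_\theta=X^{0,(1-\theta)\beta_0+\theta\beta_1}$) then converts these estimates into a bound with both $b$-exponents equal to some common $b_0=(1-\theta)(1/2+\kappa')<1/2$, at the cost of replacing the power $(N_1/\epsilon)^{(d-2)/2+\kappa'}$ by $(N_1/\epsilon)^{(d-2)/2+\kappa'+O(\theta)}$. Choosing $\theta,\kappa'$ small depending on $\kappa$ gives the lemma. The case where $u$ or $v$ is replaced by its complex conjugate is identical, since complex conjugation only reflects the Fourier support and the above estimates are invariant under this reflection.

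The main technical obstacle is the bilinear Strichartz estimate itself: proving the sphere-lattice point count with only a $\kappa'$-loss in arbitrary dimension requires the $\ell^2$-decoupling theorem of Bourgain--Demeter. This is, however, now a classical tool that can be invoked directly; the remaining interpolation step is routine.
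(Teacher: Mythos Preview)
Your overall strategy---prove the bilinear estimate at $b=\tfrac12+$ and then interpolate against a lossy Bernstein endpoint to push both $b$-exponents strictly below $\tfrac12$---is exactly the paper's. The details differ in two places, and one of them needs care.

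For the $b=\tfrac12+$ estimate, the paper does not invoke a ready-made bilinear Strichartz or lattice-point counting on spheres. Instead it decomposes the high-frequency factor $P_{\epsilon,N_2}v$ into cubes $Q^n_{\epsilon,N_1}$ of side $N_1/\epsilon$, uses almost orthogonality, applies H\"older to split into two $L^4_tL^4_x$ factors, and then uses the \emph{linear} $L^4$ Strichartz estimate (with the $\epsilon^\kappa$-loss) on each cube. This is the standard way the bilinear estimate on $\mathbb T^d$ is actually proved; your appeal to ``lattice points on spheres via decoupling'' is morally right but circular, since decoupling enters precisely through the $L^4$ Strichartz estimate used in the cube argument.

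For the interpolation step there is a genuine issue in your write-up. Your endpoint B (low-frequency factor in $L^\infty_{t,x}$ via Bernstein, high-frequency factor in $L^2_{t,x}$) correctly costs $(N_1/\epsilon)^{d/2}$, but the ``symmetric counterpart'' as you describe it---swapping which factor sits in $X^{0,1/2+\kappa'}$---would place the high-frequency factor in $L^\infty_{t,x}$ and pick up $(N_2/\epsilon)^{d/2}$, destroying uniformity in $N_2$. The fix is to keep Bernstein on the \emph{low}-frequency factor in both endpoints: use $P_{\epsilon,N_1}u\in L^2_tL^\infty_x$ (cost $(N_1/\epsilon)^{d/2}$ from $X^{0,0}$ via Bernstein) against $P_{\epsilon,N_2}v\in L^\infty_tL^2_x$ (from $X^{0,1/2+\kappa'}$). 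The paper sidesteps this entirely by using a single \emph{symmetric} lossy endpoint, namely $L^4_tL^\infty_x\times L^4_tL^2_x$ with Bernstein on the low-frequency factor, which puts both functions in $X^{0,1/4+\delta}$ at cost $(N_1/\epsilon)^{d/2}$; interpolation is then a single step between two symmetric endpoints rather than a two-stage bilinear interpolation.
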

\begin{proof}  \underline{Step 1: the estimate $(X^{0,\frac{1}{2}+}_\epsilon)^2 \to L^2_2 L^2$}. 
By almost orthogonality followed by H\"older's inequality,
$$
\| P_{\epsilon, N_1} u P_{\epsilon, N_2} v \|_{L^2_2 L^2}^2 \lesssim \sum_{n \in \mathbb{Z}^d} \|  P_{\epsilon, N_1} u Q_{\epsilon, N_1}^n P_{\epsilon, N_2} v \|_{L^2_2 L^2}^2 \lesssim \sum_n \| P_{\epsilon, N_1} u \|_{L^4_2 L^4}^2  \|Q_{\epsilon, N_1}^n P_{\epsilon, N_2} v \|_{L^4_2 L^4}^2 .
$$
Applying now the Strichartz estimates, followed once again by almost orthogonality, we get for $b>\frac{1}{2}$
$$
\dots \lesssim_b N_1^{\frac{d}{2}-1+\kappa} \epsilon^{-\frac{d}{2}+1-\kappa} \sum_n \| P_{\epsilon, N_1} u \|_{X^{0,b}_\epsilon}^2  \|Q_{\epsilon, N_1}^n P_{\epsilon, N_2} v \|_{X^{0,b}_{\epsilon}}^2 \lesssim  N_1^{\frac{d}{2}-1+\kappa} \epsilon^{-\frac{d}{2}+1-\kappa}   \| P_{\epsilon, N_1} u \|_{X^{0,b}_{\epsilon}}^2 \| P_{\epsilon, N_2} v \|_{X^{0,b}_{\epsilon}}^2
$$

\bigskip

\noindent
\underline{Step 2: the estimate $(X^{0,\frac{1}{4}+}_\epsilon)^2 \to L^2_2 L^2$} Interpolating between the inequalities $\displaystyle \| v \|_{L^\infty L^2} \lesssim \| v \|_{X^{0,b}}$ if $b>\frac{1}{2}$, and $\displaystyle \| v \|_{L^2 L^2} = \| v \|_{X^{0,0}}$ gives for any $\delta>0$
$$
\| v \|_{L^4 L^2} \lesssim_\delta \| v \|_{X^{0,\frac{1}{4}+\delta}}.
$$ 

Applying H\"older's inequality, followed by Sobolev embedding and the above inequality,
\begin{align*}
\| P_{\epsilon, N_1} u P_{\epsilon, N_2} v \|_{L^2_2 L^2} & \lesssim \| P_{\epsilon,N_1} u \|_{L^4 L^\infty} \| P_{\epsilon,N_2} u \|_{L^4 L^2} \\
& \lesssim \epsilon^{-d/2} N_1^{d/2} \| P_{\epsilon,N_1} u \|_{L^4 L^2} \| P_{\epsilon,N_2} v \|_{L^4 L^2} \\
& \lesssim \epsilon^{-d/2}  N_1^{d/2} \| P_{\epsilon,N_1} u \|_{X^{0,\frac{1}{4}+\delta}_{\epsilon}}  \| P_{\epsilon,N_2} v \|_{X^{0,\frac{1}{4}+\delta}_{\epsilon}} 
\end{align*}

\bigskip

\noindent
\underline{Step 3: interpolation} Interpolating between the results of Step 1 and Step 2 gives the desired result.
\end{proof}

We can now proceed with the proof of~\eqref{goldfinch}. Omitting complex conjugation in the notations for simplicity, we split all the functions into dyadic frequency projections to obtain
$$
 \iint \chi(t) |u|^2 u v\,dx\,dt =  \sum_{N_1,N_2,N_3,N_4 \in 2^{\mathbb{N}_0}} \iint  \chi(t) P_{\epsilon,N_1} u \, P_{\epsilon,N_2} u  \, P_{\epsilon,N_3} u \,  P_{\epsilon,N_4} v  \, dx\,dt.
$$
Without loss of generality, we can assume that $N_1 \leq N_2 \leq N_3$. We distinguish two cases.

\bigskip

\noindent
\underline{Case 1: $N_3 \sim N_4$} By Cauchy-Schwarz' inequality followed by Lemma~\ref{robin},
\begin{align*}
& \left|  \iint   \chi(t)  P_{\epsilon,N_1} u \, P_{\epsilon,N_2} u  \, P_{\epsilon,N_3} u \,  P_{\epsilon,N_4} v  \, dx\,dt \right| \leq \|P_{\epsilon,N_1} u \, P_{\epsilon,N_3} u\|_{L^2_2 L^2} \| P_{\epsilon,N_2} u \,  P_{\epsilon,N_4} v \|_{L^2_2 L^2} \\
& \quad \leq \epsilon^{- d+2-\kappa}  N_1^{\frac{d}{2}-1 + \kappa} N_2^{\frac{d}{2}-1 + \kappa} \|P_{\epsilon,N_1} u \|_{X_{\ep}^{0,b_0}}  \|P_{\epsilon,N_2} u \|_{X_{\ep}^{0,b_0}}  \|P_{\epsilon,N_3} u \|_{X_{\ep}^{0,b_0}}  \|P_{\epsilon,N_4} v \|_{X_{\ep}^{0,b_0}} \\
& \quad \leq \epsilon^{- d+2-\kappa}  N_1^{\frac{d}{2}-1 + \kappa - s} N_2^{\frac{d}{2}-1 + \kappa - s} \underbrace{ N_3^{-s} N_4^{s}}_{\displaystyle \sim 1}  \|P_{\epsilon,N_1} u \|_{X_{\ep}^{s,b_0}}  \|P_{\epsilon,N_2} u \|_{X_{\ep}^{s,b_0}}  \|P_{\epsilon,N_3} u \|_{X_{\ep}^{s,b_0}}  \|P_{\epsilon,N_4} v \|_{X_{\ep}^{-s,b_0}} .
\end{align*}
It is now easy to conclude that
$$
\sum_{\substack{N_1 \leq N_2 \leq N_3 \\ N_3 \sim N_4}}  \left|  \iint \chi(t) P_{\epsilon,N_1} u \, P_{\epsilon,N_2} u  \, P_{\epsilon,N_3} u \,  P_{\epsilon,N_4} v  \, dx\,dt \right| \lesssim \epsilon^{- d+2-\kappa} \| u \|_{X_{\ep}^{s,b_0}}^3 \| v \|_{X_{\ep}^{-s,b_0}}.
$$
Indeed, the variables $N_1$ and $N_2$ simply contribute a geometric series as $s>d/2-1$, while the sum over $N_3 \sim N_4$ can be bounded by the Cauchy-Schwarz inequality.

\bigskip

\noindent
\underline{Case 2: $N_4 \ll N_3$} By the same arguments as in Case 1,
\begin{align*}
& \left|  \iint \chi(t) P_{\epsilon,N_1} u \, P_{\epsilon,N_2} u  \, P_{\epsilon,N_3} u \,  P_{\epsilon,N_4} v  \, dx\,dt \right| \leq \|P_{\epsilon,N_1} u \, P_{\epsilon,N_3} u\|_{L^2_2 L^2} \| P_{\epsilon,N_2} u \,  P_{\epsilon,N_4} v \|_{L^2_2 L^2} \\
& \qquad  \leq \epsilon^{- d+2-\kappa}  N_1^{\frac{d}{2}-1 + \kappa} N_4^{\frac{d}{2}-1 + \kappa} \|P_{\epsilon,N_1} u \|_{X_{\ep}^{0,b_0}}  \|P_{\epsilon,N_2} u \|_{X_{\ep}^{0,b_0}}  \|P_{\epsilon,N_3} u \|_{X_{\ep}^{0,b_0}}  \|P_{\epsilon,N_4} v \|_{X_{\ep}^{0,b_0}} \\
& \qquad  \leq \epsilon^{- d+2-\kappa} N_1^{\frac{d}{2}-1 + \kappa - s} N_4^{\frac{d}{2}-1 + \kappa + s} N_2^{-s} N_3^{-s}  \|P_{\epsilon,N_1} u \|_{X_{\ep}^{s,b_0}}  \|P_{\epsilon,N_2} u \|_{X_{\ep}^{s,b_0}}  \|P_{\epsilon,N_3} u \|_{X_{\ep}^{s,b_0}}  \|P_{\epsilon,N_4} v \|_{X_{\ep}^{-s,b_0}}.
\end{align*}
The condition $N_4 \ll N_3$ and quasi-orthogonality implies $N_2 \sim N_3$, and
$$
\sum_{\substack{N_1 \leq N_2 \leq N_3 \\ N_4 \ll N_3 \sim N_2}} N_1^{\frac{d}{2}-1 + \kappa - s} N_4^{\frac{d}{2}-1 + \kappa + s} N_2^{-s} N_3^{-s} < \infty.
$$
As a consequence,
$$
\sum_{\substack{N_1 \leq N_2 \leq N_3 \\ N_4 \ll N_3}} \left|  \iint \chi(t) P_{\epsilon,N_1} u \, P_{\epsilon,N_2} u  \, P_{\epsilon,N_3} u \,  P_{\epsilon,N_4} v  \, dx\,dt \right| \lesssim \epsilon^{- d+2-\kappa} \| u \|_{X_{\ep}^{s,b_0}}^3  \| v \|_{X_{\ep}^{-s,b_0}}.
$$
The estimate~\eqref{goldfinch} now follows by combining Case 1 and Case 2, and by choosing $b$ such that $ \frac{1}{2} < b < 1-b_0$.

\subsection{Proof of Proposition~\ref{propbilinear}: the bilinear bound}
Recall the identity $X^{0,b}=X^{0,b}_{\ep}$. First note that, by interpolating between $\| v \|_{L^\infty L^2} \lesssim \| v \|_{X^{0,b}}$, for $b> \frac{1}{2}$, and $\| v \|_{L^2 L^2} = \| v \|_{X^{0,0}}$, one obtains that for any $r<\infty$, there exists $\widetilde{b} < \frac 12$ such that
\begin{equation}
\label{alouette}
\| v \|_{L^r L^2} \lesssim \| v \|_{X^{0,\widetilde b}}.
\end{equation}
Proceeding as in the previous subsection, we are to bound
\begin{align*}
\sup_{\| v\|_{X^{-s,1-b}} \leq 1}
\iint \chi(t) v^{app} |u|^2 \overline{v} \,dx \,dt.
\end{align*}
Next, we omit complex conjugation signs for simplicity, and localize the functions appearing in the above integral dyadically in frequency, at $N_1$, $N_2$, and $N_3$ for $u$, $u$ and $v$ respectively. By symmetry, we can assume that $N_1 \leq N_2$; and since $v^{app}$ is localized in Fourier on $B(0,C \epsilon^{-1})$, we can assume that $N_3 \lesssim N_2$. Therefore, it suffices to bound
$$
\sum_{\substack{ N_1 \leq N_2 \\ N_3 \lesssim N_2}} \left| \iint \chi(t) v^{app} P_{N_1} u P_{N_2} u P_{N_3} v \,dx \,dt \right|.
$$
Applying successively H\"older's inequality with $\frac 1r +\frac 1q=\frac 12$, Lemma~\ref{robin} and inequalities~\eqref{mesangecharbonniere} and~\eqref{alouette}, this is (for $\kappa$ small enough):
\begin{align*}
\dots & \lesssim \| v^{app} \|_{L^q_T L^\infty} \sum_{\substack{ N_1 \leq N_2 \\ N_3 \lesssim N_2}} \left\| P_{N_1} u P_{N_2} u\right\|_{L^2_2 L^2} \| P_{N_3} v \|_{L^r L^2} \\
& \lesssim  \epsilon^{-\frac{d}{2} + \frac{1}{2} -\mu- \kappa} \sum_{\substack{ N_1 \leq N_2 \\ N_3 \lesssim N_2}}N_1^{\frac{d}{2} - 1 + \kappa}  \| P_{N_1} u \|_{X^{0,b_0}} \| P_{N_2} u \|_{X^{0,b_0}} \| P_{N_3} v \|_{X^{0,\widetilde b}} \\
& \lesssim  \epsilon^{-\frac{d}{2} + \frac{1}{2} -2\mu} \sum_{\substack{ N_1 \leq N_2 \\ N_3 \lesssim N_2}} N_1^{\frac{d}{2} - 1 + \kappa - s} N_2^{-s} N_3^{s}  \| P_{N_1} u \|_{X_\epsilon^{s,b_0}} \| P_{N_2} u \|_{X_\epsilon^{s,b_0}} \| P_{N_3} v \|_{X_\epsilon^{s,\widetilde b}}.
\end{align*}
Summing the geometric series in $N_1$, and applying Cauchy-Schwarz in $N_2$ and $N_3$, this is
$$
\dots \lesssim \epsilon^{-\frac{d}{2} + \frac{1}{2} -2\mu} \| u \|_{X^{s,b_0}_\epsilon}^2 \| v \|_{X^{s,\widetilde b}_\epsilon}.
$$
There remains to fix $q$ as close to $2$ as desired, choose $\widetilde{b}<\frac{1}{2}$ which allows for~\eqref{alouette}, and finally choose $b>\frac{1}{2}$ such that $1-b > \widetilde b$. 

\appendix

\section{Basics of $X^{s,b}$ spaces}

\label{Xsbbasics}

These spaces were introduced in~\cite{Bourgain}. We quickly review their properties, refering the reader to~\cite{Tao}, Section 2.6, for details. 

\bigskip

\noindent \underline{Definition} Let
$$
\| f \|_{H^s_\epsilon} = \| \langle \epsilon D \rangle^s f \|_{L^2}
$$
and
$$
\| u \|_{X^{s,b}_\epsilon} = \| e^{-it\Delta} u(t) \|_{L^2 H^s_\epsilon} =  \| \langle \epsilon k \rangle^s \langle \tau +|k|^2 \rangle^b \widetilde{u}(\tau,k) \|_{L^2(\mathbb{R} \times \mathbb{Z}^2)}
$$

\bigskip

\noindent \underline{Time continuity} For $b > \frac{1}{2}$,
\begin{equation}
\label{cormorant}
\| u \|_{\mathcal{C} H^s_\epsilon} \lesssim \| u \|_{X^{s,b}_{\epsilon}}.
\end{equation}

\bigskip

\noindent \underline{Inverting the linear Schr\"odinger equation} Assume that $u$ solves
$$
\left\{
\begin{array}{l}
i \partial_t u - \Delta u = F \\ u(t=0) = 0
\end{array}
\right.
$$
Then, denoting $\chi$ for a smooth cutoff function, supported on $B(0,2)$, and equal to $1$ on $B(0,1)$,
\begin{equation}
\label{mesangebleue0}
\| \chi(t) u \|_{X_\epsilon^{s,b-1}} \lesssim \| F \|_{X_\epsilon^{s,b}}.
\end{equation}

\bigskip 

\noindent
\underline{From group to $X^{s,b}$ estimates} Assume that, uniformly in $\tau_0 \in \mathbb{R}$,
$$
\| e^{it\tau_0} e^{it \frac \Delta 2} f \|_{Y} \leq C_0(\epsilon) \| \langle \epsilon D \rangle^s f \|_{L^2}
$$
Then, if $b > \frac{1}{2}$,
$$
\| u \|_{Y} \lesssim_b C_0(\epsilon) \| u \|_{X^{s,b}_\epsilon}
$$

\bigskip

\noindent
\underline{Strichartz estimates} We want to apply the previous statement to Strichartz estimates: it was proved in~\cite{Bourgain} that, for $s > 0$ and $\kappa>0$,
\begin{align*}
& \| e^{it \Delta } f \|_{L^4_{1} L^4} \lesssim_{\kappa,s} \epsilon^{-\kappa} \| \langle \epsilon D \rangle^s f \|_{L^2} \qquad \mbox{if $d=2$} \\
& \| e^{it \Delta } f \|_{L^4_{1} L^4} \lesssim_{\kappa,s} \epsilon^{\frac{1}{2} - \frac{d}{4}} \| \langle \epsilon D \rangle^{\frac{1}{2} - \frac{d}{4}} f \|_{L^2} \qquad \mbox{if $d\geq 3$}.
\end{align*}
As a consequence, if $s> 0$, $\kappa>0$, $b > \frac{1}{2}$,
\begin{equation}
\label{mesangebleue1}
\begin{split}
& \| u \|_{L^{4}_{1} L^{4}} \lesssim_{\kappa,s,b} \epsilon^{-\kappa} \| u  \|_{X^{s,b}_\epsilon} \qquad \mbox{if $d=2$} \\
& \| u \|_{L^{4}_{1} L^{4}} \lesssim_{b} \epsilon^{\frac{1}{2} - \frac{d}{4}} \| u \|_{X_\epsilon^{\frac{d}{4} - \frac{1}{2},b}} \qquad \mbox{if $d\geq 3$}.
\end{split}
\end{equation}
It will also be useful to localize Strichartz estimates through frequency projectors: if $d \geq 3$,
\begin{align*}
& \| P_{\epsilon,N} e^{it\Delta} f \|_{L^4 L^4} \lesssim \left( \frac{N}{\epsilon} \right)^{\frac{d}{4}-\frac{1}{2}} \| P_{\epsilon,N} f \|_{L^2} \\
& \| P_{\epsilon,N} u \|_{L^4 L^4} \lesssim \left( \frac{N}{\epsilon} \right)^{\frac{d}{4}-\frac{1}{2}} \| u \|_{X^{0,b}},
\end{align*}
with an additional $\kappa$ loss if $d=2$, and identical statements if $P_{\epsilon,N}$ is replaced by $Q_{\epsilon,N}^n$.

\bigskip

\noindent \underline{Duality} The dual of $X^{s,b}_\epsilon$ is $X^{-s,-b}_\epsilon$. Therefore, the previous inequalities implies that, if $s'<0$, $\kappa>0$, $b' <- \frac{1}{2}$,
\begin{equation}
\label{mesangebleue2}
\begin{split}
& \| \chi(t) u \|_{X^{s',b'}_\epsilon} \lesssim_{\kappa,s',b'}  \epsilon^{-\kappa} \| u \|_{L^{4/3}_{1} L^{4/3}}  \qquad \mbox{if $d=2$} \\
& \| \chi(t) u \|_{X_\epsilon^{-\frac{d}{4} + \frac{1}{2},b'}} \lesssim_{b'}  \epsilon^{\frac{1}{2} - \frac{d}{4}} \| u \|_{L^{4/3}_{1} L^{4/3}}\qquad \mbox{if $d\geq 3$}.
\end{split}
\end{equation}
Similarly, the dual of the inequality~\eqref{cormorant} is, for any $b' < \frac{1}{2}$,
\begin{equation}
\label{mesangebleue3}
\| u \|_{X^{s,b'}_{\epsilon}} \lesssim_{b'} \| u \|_{L^1 H^s_\epsilon}
\end{equation}

\bigskip

\noindent \underline{Interpolation} If $0 \leq \theta \leq 1$, $s = \theta s_0 + (1-\theta) s_1$ and $b = \theta b_0 + (1-\theta) b_1$,
$$
\| u \|_{X^{s,b}} \leq \| u \|_{X^{s_0,b_0}}^\theta \| u \|_{X^{s_1,b_1}}^{1-\theta}. 
$$

\section{Summing at a vertex}

\begin{lemma}[Degree two vertex] \label{lemmavertextwo} \label{macareux1} For any $\epsilon \leq 1$, $k_0 \in \mathbb{Z}^d$ with $|k_0| \leq \epsilon^{-1}$, $\alpha \in \mathbb{R}$, $\beta \geq 1$, $\sigma, \sigma' \in \{ \pm 1 \}$ with $(\sigma, \sigma')\neq (1,1)$, and $\kappa>0$,
$$
\sum_{\substack{k,k' \in \mathbb{Z}^d \\ |k|,|k'| < \epsilon^{-1}} } \frac{1}{||k|^2 + \sigma' |k'|^2 + \sigma |k_0 - k - k'|^2 - \alpha + i \beta|} \lesssim_\kappa \epsilon^{2-2d-\kappa}
$$
In particular,
$$
\# \{ (k,k') \; \mbox{such that} \; |k| , |k'| < \epsilon^{-1} \; \mbox{and} \; ||k|^2 + \sigma' |k'|^2 + \sigma |k_0 - k - k'|^2 - \alpha + i \beta| \leq \beta \} \lesssim_\kappa \beta  \epsilon^{2-2d-\kappa}.
$$
\end{lemma}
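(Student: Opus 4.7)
The plan is to exploit the condition $(\sigma, \sigma')\neq(1,1)$ to rewrite the phase
\[
Q := |k|^2 + \sigma'|k'|^2 + \sigma|k_0 - k - k'|^2
\]
as an affine bilinear form in integer variables. Setting $\ell = k_0 - k - k'$ so that $k + k' + \ell = k_0$, a direct expansion in each of the three admissible cases yields a unimodular linear change of variables $(k, k') \leftrightarrow (a, b) \in \mathbb{Z}^d \times \mathbb{Z}^d$ with $|a|, |b| \lesssim \epsilon^{-1}$ and
\[
Q = c(k_0) \pm 2\, a \cdot b, \qquad c(k_0) = \pm |k_0|^2;
\]
for instance one may take $(a, b) = (k - k_0, k' - k_0)$ when $(\sigma, \sigma') = (-1, 1)$, $(a, b) = (k - k_0, k + k')$ when $(\sigma, \sigma') = (1, -1)$, and $(a, b) = (k' - k_0, -(k + k'))$ when $(\sigma, \sigma') = (-1, -1)$, as can be checked by expanding and using $k + k' + \ell = k_0$.

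The problem thus reduces to controlling
\[
T := \sum_{\substack{a, b \in \mathbb{Z}^d\\ |a|, |b| \lesssim \epsilon^{-1}}} \frac{1}{|2\, a \cdot b - \tilde\alpha + i\beta|} = \sum_{|n|\lesssim \epsilon^{-2}} \frac{\mathcal N(n)}{|2n - \tilde\alpha + i\beta|},
\]
where $\mathcal N(n) := \#\{(a, b) : |a|, |b| \lesssim \epsilon^{-1},\ a \cdot b = n\}$ and $\tilde\alpha$ absorbs $\alpha$ and $c(k_0)$. The key step is the uniform bound $\mathcal N(n) \lesssim_\kappa \epsilon^{2-2d-\kappa}$. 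To prove it, fix $b \neq 0$: the slice $\{a : a \cdot b = n\}$ is empty unless $\gcd(b) \mid n$, in which case it is a translate of a codimension-one sublattice of covolume $|b|/\gcd(b)$, hence contains $\lesssim \epsilon^{-(d-1)} \gcd(b)/|b|$ points in any ball of radius $\epsilon^{-1}$. Writing $b = g b'$ with $b'$ primitive and $g \mid n$, and using $\sum_{b'\text{ primitive},\,|b'|\leq R} |b'|^{-1} \lesssim R^{d-1}$, this yields
\[
\mathcal N(n) \lesssim \epsilon^{-(2d-2)} \sum_{g \mid n,\, g \leq \epsilon^{-1}} g^{-(d-1)} + \mathbf{1}_{n=0}\,\epsilon^{-d} \lesssim \epsilon^{-(2d-2)} d(n) + \mathbf{1}_{n=0}\,\epsilon^{-d};
\]
since $d(n) \lesssim_\kappa |n|^\kappa \lesssim \epsilon^{-2\kappa}$ and $\epsilon^{-d}$ is absorbed by $\epsilon^{-(2d-2)}$ for $d \geq 2$, the desired bound on $\mathcal N$ follows.

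Combining with the elementary estimate $\sum_{|n|\lesssim \epsilon^{-2}} |2n - \tilde\alpha + i\beta|^{-1} \lesssim \log(\epsilon^{-1}) \lesssim \epsilon^{-\kappa}$ gives $T \lesssim_\kappa \epsilon^{2-2d-\kappa}$, which is the first assertion; the \emph{in particular} statement follows immediately since each pair in the level set $\{|2\, a\cdot b - \tilde\alpha + i\beta| \leq \beta\}$ contributes at least $(2\beta)^{-1}$ to $T$. The main obstacle is the $n = 0$ sector: a crude count of lattice points on hyperplanes, disregarding the covolume factor $\gcd(b)/|b|$, would only give $\mathcal N(0) \lesssim \epsilon^{-(2d-1)}$, losing an entire power of $\epsilon$; it is essential to organize the $b$-sum by primitivity to recover the convergent series $\sum_g g^{-(d-1)}$ (with a $\log$ loss in dimension two absorbed into $\epsilon^{-\kappa}$). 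The assumption $(\sigma, \sigma') \neq (1, 1)$ is crucial, since in the excluded positive-definite case no such indefinite bilinear rewriting over a product of balls would be available.
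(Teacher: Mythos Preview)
Your proof is correct and shares the paper's key idea: in each admissible sign pattern, the quadratic form factors as $Q = c(k_0) \pm 2\,a\cdot b$ after a unimodular affine change of variables, reducing the problem to a divisor-type count. Your specific substitutions agree (up to relabeling) with the paper's expressions $2(k-k_0)\cdot(k+k')+|k_0|^2$, $2(k-k_0)\cdot(k_0-k')+|k_0|^2$, and $-|k_0|^2+2(k+k')\cdot(k_0-k')$.

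Where you diverge from the paper is only in the counting step. The paper proceeds more elementarily: for each fixed level $m$, it freezes the first $d-1$ coordinates of each of $a$ and $b$ (contributing $\epsilon^{2-2d}$ choices) and observes that the last coordinates must satisfy $a_d b_d = \text{const}$, so the divisor bound gives $\lesssim_\kappa \epsilon^{-\kappa}$ choices for the remaining pair. You instead organize the sum by the value $n=a\cdot b$, bound $\mathcal N(n)$ via a geometric lattice-point count on hyperplanes (using that the slice $\{a:a\cdot b=n\}$ has covolume $|b|/\gcd(b)$ and that $|b|\lesssim\epsilon^{-1}$ so the main term dominates), and then sum $|2n-\tilde\alpha+i\beta|^{-1}$ over $n$. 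Both routes end with the divisor bound $\tau(n)\lesssim_\kappa |n|^\kappa$; yours requires the extra input that the lattice-point count on an affine slice is $\lesssim R^{d-1}\gcd(b)/|b|$ when $R\gtrsim|b'|$ (which holds here since $|b|\lesssim\epsilon^{-1}$), but in exchange yields directly the uniform level-set bound $\mathcal N(n)\lesssim_\kappa\epsilon^{2-2d-\kappa}$, from which both assertions of the lemma follow at once without a separate dyadic decomposition. The paper's route is shorter and avoids any lattice geometry.
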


\begin{proof} Denoting $Q(k,k') = |k|^2 + \sigma' |k'|^2 + \sigma |k_0 - k - k'|^2$, the above left-hand side can be bounded by
$$
\dots \lesssim \sum_{\substack{j \in \mathbb{N} \\ \beta < 2^j \beta \lesssim  \epsilon^{-2}}} (2^{j} \beta)^{-1} \# \left\{ (k,k') \; \mbox{such that} \; |k| , |k'| < \epsilon^{-1} \; \mbox{and} \; |Q(k,k') - \alpha + i \beta| \leq 2^{j} \beta \right\}.
$$
Therefore, it suffices to show that
$$
 \# \left\{ (k,k') \; \mbox{such that} \; |k| , |k'| < \epsilon^{-1} \; \mbox{and} \; |Q(k,k') - \alpha| \leq 2^{j}\beta \right\} \lesssim_\kappa (2^j \beta)  \epsilon^{2-2d-\kappa},
$$
which follows from
$$
 \# \left\{ (k,k') \; \mbox{such that} \; |k| , |k'| < \epsilon^{-1} \; \mbox{and} \; Q(k,k') = m \right\} \lesssim_\kappa \epsilon^{2-2d-\kappa}
$$
(where $m \in \mathbb{Z}$).
We now examine all possible values of $\sigma$ and $\sigma'$.

\bigskip

\noindent \underline{Case 1: $\sigma = 1,\sigma' = -1$.} In this case, $Q(k,k') = 2 (k-k_0) \cdot (k+k') + |k_0|^2$. To show that $Q(k,k') = m$ has $\lesssim_\kappa \epsilon^{2-2d-\kappa}$ solutions, one proceeds as in Case 1: one picks $k_1,\dots,k_{d-1},k'_{1},\dots,k'_{d-1}$, which gives $\epsilon^{2-2d}$ possibilities. There remains to pick $k_d$ and $k'_d$ such that their product is a fixed number; by the divisor bound, this contributes a further $\epsilon^{-\kappa}$.

\bigskip

\noindent \underline{Cases 2 and 3: $\sigma = -1,\sigma' = 1$ and $\sigma = - 1,\sigma' = -1$.} In the former case, $Q(k,k') = 2(k-k_0) \cdot (k_0 - k') + |k_0|^2$, and in the latter, $Q(k,k') = - |k_0|^2 + 2(k+k') \cdot (k_0 - k')$. Either way, the proof of Case 2 applies.
\end{proof}

\begin{lemma}[Degree one vertex] \label{colombe} \label{macareux2} For any $\epsilon \leq 1$, $k_0 \in \mathbb{Z}^d$ with $ |k_0| \leq \epsilon^{-1}$, $\alpha \in \mathbb{R}$, $\beta \geq 1$, and $\kappa>0$,
$$
\sum_{\substack{k \in \mathbb{Z}^d \\ |k| < \epsilon^{-1}} } \frac{1}{||k|^2 + |k_0 - k|^2 - \alpha + i \beta|} \lesssim_\kappa \epsilon^{1-d-\kappa}.
$$
If moreover $k_1\in \mathbb Z^d$ is such that $|k_1|\lesssim \ep^{-1}$:
\be \label{bd:degree1estimate}
\sum_{\substack{k \in \mathbb{Z}^d \\ |k| < \epsilon^{-1}} } \frac{\left|1-\delta (k_0)-\delta(k_0+k_1-k)\right|}{||k|^2 - |k_0 - k|^2 - \alpha + i \beta|} \lesssim_\kappa \left\{ \begin{array}{l l} \epsilon^{1-d-\kappa} \quad \mbox{if }k_0\neq 0,\\ 1 \quad \mbox{if }k_0= 0 \end{array} \right. \lesssim \epsilon^{1-d-\kappa}
\ee
In particular, for any $\sigma\in \{\pm 1\}$, assuming $k_0\neq 0$ for $\sigma=-1$:
$$
\# \{ k \; \mbox{such that} \; |k| < \epsilon^{-1} \; \mbox{and} \; ||k|^2 + \sigma |k_0 - k |^2 - \alpha + i \beta| \leq \beta \} \lesssim_\kappa \beta \epsilon^{1-d-\kappa}.
$$
\end{lemma}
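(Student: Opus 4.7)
My plan is to follow the strategy of Lemma~\ref{lemmavertextwo}: dyadically decompose the denominator, then count lattice points in the level sets of $Q(k):= |k|^2 \pm |k_0-k|^2$. Since $|k|,|k_0|\leq \epsilon^{-1}$, we have $|Q(k)|\lesssim \epsilon^{-2}$, so I will split each sum into the contributions from $|Q(k)-\alpha+i\beta|\in[2^{j-1}\beta,2^j\beta]$ with $0\leq j\lesssim \log(\epsilon^{-2}/\beta)$, and set
\begin{equation*}
S_j := \{k\in\mathbb{Z}^d : |k|<\epsilon^{-1},\; |Q(k)-\alpha|\leq 2^j\beta\}.
\end{equation*}
The two main inequalities will reduce to suitable bounds on $|S_j|$, and the ``In particular'' counting statement is then immediate from the Markov-type inequality $\#\{|Q(k)-\alpha+i\beta|\leq \beta\}\leq \beta\sum_k|Q(k)-\alpha+i\beta|^{-1}$.

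For the positive-definite case $Q(k)=|k|^2+|k_0-k|^2$, I will complete the square: $Q(k)=\tfrac{1}{2}|2k-k_0|^2+\tfrac{1}{2}|k_0|^2$. Setting $k'=2k-k_0\in\mathbb{Z}^d$, bounding $|S_j|$ reduces to counting $k'$ with $|k'|\lesssim\epsilon^{-1}$ and $|k'|^2=m$ for each integer $m$ in an interval of length $\sim 2^j\beta+1$. The classical bound on the sum-of-$d$-squares function, $r_d(m)\lesssim_\kappa m^{d/2-1+\kappa}$ (divisor bound for $d=2$, Iwaniec for $d=3$, elementary otherwise), gives $\leq\epsilon^{2-d-\kappa}$ representations per $m$, hence $|S_j|\lesssim (2^j\beta+1)\epsilon^{2-d-\kappa}$. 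Summing, I expect the even stronger bound $\epsilon^{2-d-2\kappa}$, which comfortably beats the claimed $\epsilon^{1-d-\kappa}$.

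For the indefinite case $Q(k)=|k|^2-|k_0-k|^2=2k_0\cdot k-|k_0|^2$ (affine in $k$), the level sets are hyperplane slabs perpendicular to $k_0$. For $k_0\neq 0$ I will estimate $|S_j|$ by a volume argument: the slab has thickness $\sim 2^j\beta/|k_0|$, the perpendicular cross-section has $(d-1)$-volume $\lesssim\epsilon^{1-d}$, and the induced $(d-1)$-sublattice on each rational hyperplane has covolume $\gtrsim 1$, yielding $|S_j|\lesssim (2^j\beta/|k_0|+1)\epsilon^{1-d}$. Using $|k_0|\geq 1$ (integer) and $\beta\geq 1$, the dyadic sum then produces $\epsilon^{1-d-\kappa}$. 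To deal with the Wick-ordering cutoff $|1-\delta(k_0)-\delta(k_0+k_1-k)|$ in~\eqref{bd:degree1estimate}, I will split by cases: if $k_0=0$ the cutoff collapses to $\delta(k_1-k)$, leaving at most one term whose denominator is $\geq\beta\geq 1$, hence bounded by $1$; if $k_0\neq 0$ the cutoff is simply $\leq 1$ and the previous estimate applies. The only mildly delicate ingredient is the lattice-point count on spheres used in the elliptic case; everything else is a routine dyadic summation, and the structure closely parallels Lemma~\ref{lemmavertextwo}.
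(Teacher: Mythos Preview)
Your proof is correct and follows essentially the same route as the paper: dyadic decomposition in $j$, level-set counting for $Q(k)$, the linear form $Q(k)=k_0\cdot(2k-k_0)$ for $\sigma=-1$ with the slab count $(\tfrac{2^j\beta}{|k_0|}+1)\epsilon^{1-d}$, and the observation that the Wick cutoff collapses to a single term when $k_0=0$. The one minor difference is in the elliptic case $\sigma=+1$: the paper simply points to the divisor-bound argument of Lemma~\ref{lemmavertextwo}, which in this setting amounts to fixing $d-2$ coordinates of $2k-k_0$ and applying the elementary bound $r_2(n)\lesssim_\kappa n^\kappa$ to the remaining two, yielding $\lesssim \epsilon^{2-d-\kappa}$ points per level set. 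This already gives $r_d(n)\lesssim n^{d/2-1+\kappa}$ for all $d\geq 2$ by slicing, so your appeal to Iwaniec for $d=3$ is correct but heavier than needed.
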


\begin{proof} The first estimate can be dealt with as in Case 1 of the proof of Lemma~\ref{lemmavertextwo}, and gives a bound $\epsilon^{2-d-\kappa} \lesssim \epsilon^{1-d+\kappa}$ on the right-hand side. We then turn to the second estimate to be proved, in which case $|k|^2 - |k_0 - k|^2 = k_0 \cdot (2k - k_0)$, which we will denote $Q(k)$.\\

\noindent \underline{Case 1: $k_0\neq 0$} We first assume $k_0\neq 0$. In order to obtain the desired bound, it suffices to show that, for $j \in \mathbb{N}$,
$$
 \# \left\{ k \; \mbox{such that} \; |k| < \epsilon^{-1} \; \mbox{and} \; |Q(k) - \alpha| \leq 2^{j}\beta \right\} \lesssim_\epsilon (2^j \beta)  \epsilon^{1-d}.
$$
But an elementary argument shows that the number of solutions of $|k_0 \cdot k - \alpha| < 2^j \beta$ is $\lesssim \left(  \frac{2^j \beta}{|k_0|} + 1 \right) \epsilon^{1-d} \lesssim  \left( 2^j \beta  \right)\epsilon^{1-d}$.\\

\noindent \underline{Case 2: $k_0= 0$} We now assume $k_0= 0$. In that case, notice that the numerator forces:
$$
\left|1-\delta (k_0)-\delta(k_0+k_1-k)\right|=\delta (k_0+k_1-k).
$$
Therefore, the sum is trivial equal to $\frac{1}{|-\alpha + i\beta|}$ as it is made of the only element $k_0+k_1$. The desired bound holds as $\frac{1}{\beta} \lesssim 1 \lesssim \epsilon^{1-d-\kappa} $ due to the dimensional assumption $d\geq 2$.

\end{proof}

\bigskip

\end{document}